\date{}
\newcommand{\V}[1]{\mbox{\boldmath $ #1 $}}
\newcommand{\tr}[1]{\text{tr} #1}
\newcommand{\M}[1]{\mathbb{M} #1}
\newcommand{\dFMF}[1]{\det\left( \left(F_K'\right)^T\mathbb{M}_K F_K'\right) #1}
\newcommand{\FMF}[1]{\left( \left(F_K'\right)^T\mathbb{M}_K F_K'\right) #1}
\newcommand{\T}[1]{\mathcal{T} #1}
\theoremstyle{definition}
\def \J{\mathbb{J}}
\theoremstyle{definition}
\newcommand{\bey}{\begin{eqnarray}}
\newcommand{\eey}{\end{eqnarray}}
\newcommand{\beq}{\begin{equation}}
\newcommand{\eeq}{\end{equation}}
\theoremstyle{plain}
\newtheorem{thm}{\hspace{6mm}Theorem}[section]
\newtheorem{lem}{\hspace{6mm}Lemma}[section]
\theoremstyle{definition}
\theoremstyle{remark}
\newtheorem{exam}{\hspace{6mm}Example}[section]
\title{A surface moving mesh method based on equidistribution and alignment}
\author{Avary~Kolasinski%
\thanks{Department of Mathematics, the University of Kansas, Lawrence, KS 66045
({\em avaryk@ku.edu}).}
\and Weizhang~Huang%
\thanks{Department of Mathematics, the University of Kansas, Lawrence, KS 66045
({\em whuang@ku.edu}).}
}
\begin{document}
\vskip 1cm
\maketitle

\begin{abstract}

A surface moving mesh method is presented for general surfaces with or without explicit parameterization. 
The method can be viewed as a nontrivial extension of the moving mesh partial differential equation method that has been developed for bulk meshes and demonstrated to work well for various applications. 
The main challenges in the development of surface mesh movement come from the fact that the Jacobian matrix of the affine mapping between the reference element and any simplicial surface element is not square. 
The development starts with revealing the relation between the area of a surface element in the Euclidean or Riemannian metric and the Jacobian matrix of the corresponding affine mapping, formulating the equidistribution and alignment conditions for surface meshes, and establishing a meshing energy function based on the conditions. 
The moving mesh equation is then defined as the gradient system of the energy function, with the nodal mesh velocities being projected onto the underlying surface. 
The analytical expression for the mesh velocities is obtained in a compact, matrix form, which makes the implementation of the new method on a computer relatively easy and robust.
Moreover, it is analytically shown that any mesh trajectory generated by the method remains nonsingular if it is so initially. It is emphasized that the method is developed directly on surface meshes, making no use of any information on surface parameterization. It utilizes surface normal vectors to ensure that the mesh vertices remain on the surface while moving, and also assumes that the initial surface mesh is given. The new method can apply to general surfaces with or without explicit parameterization since the surface normal vectors can be computed even when the surface only has a numerical representation. A selection of two- and three-dimensional examples are presented.
\end{abstract}

\noindent
\textbf{AMS 2010 Mathematics Subject Classification.} 65M50, 65N50

\noindent
\textbf{Key Words.} surface mesh movement, surface mesh adaptation, moving mesh PDE, mesh nonsingularity, surface parameterization

\noindent
\textbf{Abbreviated title.} A surface moving mesh method.

\section{Introduction}

We are interested in methods that can directly move simplicial meshes on general surfaces with or without analytical expressions. Such surface moving mesh methods can be used for adaptation and/or quality improvements of surface meshes and thus are useful for computational geometry and numerical solutions of partial differential equations (PDEs)
defined on surfaces; e.g., see \cite{DD, DE2, TM}.

There has been some work done on mesh movement and adaptation for surfaces.
For example, Crestel et al. \cite{CRR} present a moving mesh method for parametric surfaces
by generalizing Winslow's meshing functional to Riemannian manifolds and
taking into consideration the Riemannian metric associated with the manifolds.
The method is simplified and implemented on a two-dimensional domain 
for surfaces that accept certain parameterizations.
Weller et al. \cite{BBCW} and McRae et al. \cite{MCB2018}
solve a Monge-Amp\' ere type equation on the surface of the sphere to generate optimally transported meshes that become equidistributed with respect to a suitable monitor function. 
MacDonald~et~al.~\cite{MMNI} devise a moving mesh method for the numerical simulation of coupled bulk-surface reaction-diffusion equations on an evolving two-dimensional domain.  They use a one-dimensional moving mesh equation in arclength to concentrate mesh points along the evolving domain boundary. 
Dassi et al. \cite{DPSS} generalize the higher embedding approach proposed in \cite{BB}.
They modify the embedding map between the underlying surface and $\mathbb{R}^6$
to include more information associated with the physical solution and its gradient.
The idea behind this mapping is that it essentially approximates the geodesic length
on the surface via a Euclidean length in $\mathbb{R}^6$. The mesh adapts in the Euclidean space
and then is mapped back to the physical domain.

The objective of this paper is to present a surface moving mesh method for general surfaces with or without explicit parameterization. The method can be viewed as a nontrivial extension of the moving mesh PDE (MMPDE) method
that has been developed for bulk meshes and demonstrated to work well for various applications;
e.g. see \cite{HR99,HR,HRR}. The main challenges in the development of surface mesh movement
come from the fact that the Jacobian matrix of the affine mapping between the reference element and
any simplicial surface element is not square. 
To overcome these challenges, we start by connecting the area of the surface element in the Euclidean metric or
a Riemannian metric with the Jacobian matrix. This connection allows us to formulate the equidistribution
and alignment conditions and ultimately, form a meshing energy function for surface meshes. 
This meshing function is similar to a discrete version of Huang's functional \cite{H,HKR,HK}
for bulk meshes which has been proven to work well in a variety of problems.
Following the MMPDE approach, we define the surface moving mesh equation as the gradient system
of the meshing function, with the nodal mesh velocities being projected onto the underlying surface.
The analytical expression for the mesh velocities is obtained in a compact, matrix form, which makes
the implementation of the new method on a computer relatively easy and robust.
Several theoretical  properties are obtained for the surface moving mesh method.
In particular, it is proven that a surface mesh generated by the method remains nonsingular for all time if it
is so initially. Moreover, the element altitudes and areas of the physical mesh are bounded below by positive constants depending only on the initial mesh, the number of elements, and the metric tensor that is used to provide information
on the size, shape, and orientation of the elements throughout the surface.
Furthermore, limiting meshes exist and the meshing function is decreasing along each mesh trajectory.
These properties are verified in numerical examples. 

It is emphasized that the new method is developed directly on surface meshes, making no use of any information
on surface parameterization. It utilizes surface normal vectors to ensure that the mesh vertices remain on the surface
while moving, and also assumes that the initial surface mesh is given. Since the surface normal vectors can be computed even when the surface only has a numerical representation, the new method can apply to general surfaces
with or without explicit parameterization. A selection of two- and three-dimensional examples are presented.

This paper is organized as follows.  In Section~\ref{SEC:functions}, the area formula for a surface element and the equidistribution and alignment conditions for surface meshes are established. The surface moving mesh equation is described in Section~\ref{SEC:MMPDE} and its theoretical analysis is given in Section~\ref{SEC:theor}. Numerical examples are then provided in Section~\ref{SEC:numerical} followed by conclusions and further remarks in Section~\ref{SEC:conclusion}.
Appendix~\ref{SEC:discrederiv} contains the derivation of derivatives of the meshing function
with respect to the physical coordinates.

\section{Equidistribution and alignment for surface meshes}
\label{SEC:functions}

In this section we formulate the equidistribution and alignment conditions for a surface mesh. These conditions are used to characterize the size, shape, and orientation of the elements and develop a meshing function for surface mesh generation and adaptation. The function is similar to the one \cite{H,HK2} used in bulk mesh generation and adaptation
and also based on  mesh equidistribution and alignment.

\subsection{Area and affine mappings for surface elements}
	
Let $S$ be a bounded surface in $\mathbb{R}^d$ ($d\ge 2$). Assume that we have a mesh $\mathcal{T}_h= \{K\}$ on $S$ and let $N$ and $N_v$ be the number of its elements and vertices, respectively.  The elements are surface simplexes in $\mathbb{R}^d$, i.e., they are $(d-1)$-dimensional simplexes in a $d$-dimensional space.
Notice that their area in $d$ dimensions is equivalent to their volume in $(d-1)$ dimensions.
Assume that the reference element $\hat{K}$ has been chosen to be a  $(d-1)$-dimensional equilateral and unitary simplex in a $(d-1)$-dimensional space.  For $\hat{K}$ and any element $K\in\mathcal{T}_h$ let $F_K:\hat{K}
\subset \mathbb{R}^{d-1} \to K \subset \mathbb{R}^{d}$ be the affine mapping between them
and $F_K'$ be its Jacobian matrix. 
Denote the vertices of $K$ by $\V x_j^K\in\mathbb{R}^d$, $j=1,\dots,d$ and the vertices of $\hat{K}$ by $\V \xi_j\in \mathbb{R}^{d-1}$, $j=1, \dots, d$. Then 
$$\mbox \qquad\qquad \V x_j^K = F_K(\V \xi_j), \quad j = 1, \dots, d.$$
From this, we have
$$\mbox \qquad\qquad\V x_j^K-\V x_1^K = F_K'\left(\V\xi_j - \V \xi_1\right), \quad j=2, \dots, d$$
or
$$\left[\V x_2^K-\V x_1^K, \dots, \V x_d^K-\V x_1^K\right]= F_K'\left[\V\xi_2 - \V \xi_1, \dots, \V\xi_d - \V \xi_1\right],$$
which gives $F_K' = E_K\hat{E}^{-1},$ where $E_K$ and $\hat{E}$ are the edge matrices for $K$ and $\hat{K}$, i.e.,
$$E_K = \left[\V x_2^K-\V x_1^K, \dots, \V x_d^K-\V x_1^K\right], \quad\hat{E} = \left[\V\xi_2 - \V \xi_1, \dots, \V\xi_d - \V \xi_1\right].$$
Notice that $\hat{E}$ is a $(d-1)\times (d-1)$ square matrix and its inverse exists since $\hat{K}$ is not degenerate.
However, unlike the bulk mesh case, matrices $E_K, F_K'\in \mathbb{R}^{d\times (d-1)}$ are not square. This makes the formulation of adaptive mesh methods more difficult for surface than bulk meshes. Nevertheless, the approach is similar for both situations, as will be seen below.

In the following we can see that the area of the physical element $K\in \mathcal{T}_h$ can be determined
using $F_K'$ or $E_K$.
\begin{lem}
\label{volK}
For any surface simplex $K$, there holds
\beq
\frac{|K|}{|\hat{K}|} =\det\left(\left(F_K'\right)^TF_K'\right)^{1/2},
\label{area-1}
\eeq
where $|K|$ and $|\hat{K}|$ denote the area of the simplexes $K$ and $\hat{K}$, respectively,
and $\det (\cdot )$ denotes the determinant of a matrix.
\end{lem}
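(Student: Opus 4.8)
The plan is to exploit the fact that $F_K'$ is a $d \times (d-1)$ matrix of full column rank (since $K$ is nondegenerate) and to interpret $\det\left(\left(F_K'\right)^T F_K'\right)^{1/2}$ as the $(d-1)$-dimensional volume scaling factor of the linear map $F_K'$. Concretely, I would first recall that the area (i.e., $(d-1)$-volume) of the physical simplex $K$ equals $\frac{1}{(d-1)!}$ times the $(d-1)$-volume of the parallelepiped spanned by the edge vectors $\V x_2^K - \V x_1^K, \dots, \V x_d^K - \V x_1^K$, the columns of $E_K$; the analogous statement holds for $\hat K$ with the columns of $\hat E$. Thus the ratio $|K|/|\hat K|$ is the ratio of these two parallelepiped volumes, and the factor $\frac{1}{(d-1)!}$ cancels.

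The key algebraic fact I would invoke is the Gram-determinant (Cauchy--Binet) formula: for a matrix $A \in \mathbb{R}^{d \times (d-1)}$, the $(d-1)$-dimensional volume of the parallelepiped spanned by its columns is exactly $\det\left(A^T A\right)^{1/2}$. Applying this to $A = E_K$ gives the $(d-1)$-volume of the edge parallelepiped of $K$ directly in terms of $\det\left(E_K^T E_K\right)^{1/2}$. The remaining task is to pass from $E_K$ to $F_K'$ using the relation $F_K' = E_K \hat E^{-1}$ established just above the lemma statement, so that
\beq
\det\left(\left(F_K'\right)^T F_K'\right)^{1/2}
= \det\left(\hat E^{-T} E_K^T E_K \hat E^{-1}\right)^{1/2}
= \frac{\det\left(E_K^T E_K\right)^{1/2}}{|\det \hat E|},
\nn
\eeq
where the last equality uses multiplicativity of the determinant on the square factors. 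Since $|\hat K| = \frac{1}{(d-1)!}\,|\det \hat E|$ and $|K| = \frac{1}{(d-1)!}\,\det\left(E_K^T E_K\right)^{1/2}$, dividing yields exactly the claimed identity.

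The one step that deserves care, and which I regard as the main obstacle, is justifying the Gram-determinant volume formula for a \emph{non-square} $A$, since the reader may be accustomed only to the square case $|\det A|$. I would address this either by citing it as a standard Cauchy--Binet consequence or by giving a one-line argument: take a $(d-1)\times(d-1)$ orthogonal change of frame (equivalently, an orthonormal basis $Q$ of the column space of $A$) so that $A = Q R$ with $Q^T Q = I_{d-1}$ and $R \in \mathbb{R}^{(d-1)\times(d-1)}$; then $A^T A = R^T R$, the volume of the parallelepiped equals $|\det R|$, and $\det\left(A^T A\right)^{1/2} = |\det R|$. Everything else in the proof is routine determinant bookkeeping; no analytic estimates or limiting arguments are involved, so the proof should be short.
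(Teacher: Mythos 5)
Your proposal is correct and follows essentially the same route as the paper: both pass from $F_K'$ to $E_K$ via $F_K' = E_K\hat{E}^{-1}$ and multiplicativity of the determinant, and both justify the Gram-determinant volume formula $\det\left(E_K^TE_K\right)^{1/2}$ for the non-square edge matrix by a QR/orthonormal-frame argument (the paper uses a full $d\times d$ unitary $Q_K$ with a zero row appended to $R_K$, you use the thin factorization with $Q^TQ = I_{d-1}$, which is the same idea). The only cosmetic difference is that you phrase the volume bookkeeping in terms of parallelepipeds with the $\frac{1}{(d-1)!}$ factors canceling, while the paper works directly with simplex areas.
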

\vspace{0.1cm}

\begin{proof}
From $F_K' = E_K \hat{E}^{-1}$, we have
\begin{align*}
\det\left(\left(F_K'\right)^TF_K'\right)^{1/2}&=\det\left(\hat{E}^{-T}E_K^TE_K\hat{E}^{-1}\right)^{1/2}\\
&=\det(\hat{E})^{-1}\det\left(E_K^TE_K\right)^{1/2}\\
&=\frac{1}{(d-1)!\, |\hat{K}|}\det\left(E_K^TE_K\right)^{1/2}, 
\end{align*}
where we have used $|\hat{K}|=\frac{1}{(d-1)!}\det(\hat{E})$. Let the QR-decomposition of $E_K\in\mathbb{R}^{d\times (d-1)}$ be given by
$$
E_K=Q_K\left[\begin{matrix} R_K\\ \V 0\end{matrix}\right], 
$$
where $Q_K \in \mathbb{R}^{d\times d}$ is a unitary matrix, $R_K\in\mathbb{R}^{(d-1)\times(d-1)}$ is an upper triangular matrix, and $\V 0\in \mathbb{R}^{1\times (d-1)}$ is a row vector of zeros. This decomposition indicates that $K$ is formed by rotating the convex hull with edges formed by the column vectors of {\small{$\left[\begin{matrix} R_K\\ \V 0\end{matrix}\right]$}}. We have
$$
|K|=\text{area}(E_K)=\text{area}\left(Q_K\left[\begin{matrix} R_K\\ \V 0\end{matrix}\right]\right)=\text{area}\left(\left[\begin{matrix} R_K\\ \V 0\end{matrix}\right]\right),
$$
where we have used the fact that rotation, $Q_K$, does not change the area.
Since the convex hull formed by the column vectors of {\small{$\left[\begin{matrix} R_K\\ \V 0\end{matrix}\right]$}} lies on the $\V x^{(1)}$ -- $\cdots$ -- $\V x^{(d-1)}$ -- plane, its area is equal to the $(d-1)$-dimensional volume of the convex hull formed by the column vectors of $R_K$ in $(d-1)$-dimensions. Then,
$$
|K|=\text{volume}_{(d-1)}(R_K)=\frac{1}{(d-1)!}\det(R_K)=\frac{1}{(d-1)!}\det(R_K^TR_K)^{1/2} .
$$
On the other hand, we have
$$\det\left(E_K^TE_K\right)=\det\left(\left[\begin{matrix} R_K\\ \V 0\end{matrix}\right]^TQ_K^TQ_K\left[\begin{matrix} R_K\\ \V0\end{matrix}\right]\right)=\det\left(\left[\begin{matrix} R_K^T ~~ \V0\end{matrix}\right]\left[\begin{matrix} R_K\\ \V0\end{matrix}\right]\right)=\det\left(R_K^TR_K\right).$$
Therefore,
$$
\det\left(\left(F_K'\right)^TF_K'\right)^{1/2}
=\frac{1}{(d-1)!\, |\hat{K}|}\det\left(E_K^TE_K\right)^{1/2}
= \frac{1}{(d-1)!\, |\hat{K}|}\det\left(R_K^T R_K\right)^{1/2}
= \frac{|K|}{|\hat{K}|} .
$$
\end{proof}

\subsection{Area of surface elements in a Riemannian metric}
We now formulate the area of a surface element in a Riemannian metric using $F_K'$ or $E_K$.
The formula is needed later in the development of algorithms for mesh adaptation. First we consider a symmetric, uniformly positive definite metric tensor $\mathbb{M}(\V x)$ which satisfies
\beq
\label{Mbound}
\underline{m}I\le\mathbb{M}(\V x) \le \overline{m}I, \quad \forall \V x\in S
\eeq
where $\underline{m}$ and $\overline{m}$ are positive constants, $I$ is the identity matrix, and
the less-than-or-equal-to sign is in the sense of negative semi-definiteness.  We define
the average of $\mathbb{M}$ over $K$ as
$$
\mathbb{M}_K=\frac{1}{|K|}\int_{K}\mathbb{M}(\V x)d\V x.
$$
Recall that the distance in the Riemannian metric, $\mathbb{M}_K$, is defined as
\beq
\label{Mnorm}
\|\V x\|_{\mathbb{M}_K}=\sqrt{\V x^T\mathbb{M}_K\V x}=\sqrt{\left(\mathbb{M}_K^{1/2}\V x\right)^T\left(\mathbb{M}_K^{1/2}\V x\right)}=\left\|\mathbb{M}_K^{1/2}\V x\right\|,
\eeq
where $\| \cdot \|$ denotes the standard Euclidean norm.  This implies that the geometric properties of $K$ in the metric $\mathbb{M}_K$ can be obtained from those of $\mathbb{M}_K^{1/2}K$ in the Euclidean metric. 
\begin{lem}
\label{volKM}
For any surface simplex $K$, there holds
\beq
\frac{|K|_{\mathbb{M}_K}}{|\hat{K}|}=\dFMF^{1/2},
\label{thm-|K|-2}
\eeq
where $|K|_{\mathbb{M}_K}$ denotes the area of $K$ in the metric $\mathbb{M}_K$.
\end{lem}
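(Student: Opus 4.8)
The plan is to reduce the statement to Lemma~\ref{volK} by converting area measured in the constant metric $\mathbb{M}_K$ into ordinary Euclidean area of a rescaled simplex. Because $\mathbb{M}_K$ is a constant, symmetric, positive definite matrix, the identity \eqref{Mnorm} shows that the linear map $\mathbb{M}_K^{1/2}$ carries the metric $\mathbb{M}_K$ into the Euclidean metric; consequently the area of $K$ in $\mathbb{M}_K$ coincides with the Euclidean area of the image simplex $\mathbb{M}_K^{1/2}K$, i.e.\ $|K|_{\mathbb{M}_K}=|\mathbb{M}_K^{1/2}K|$. Here $\mathbb{M}_K^{1/2}K$ denotes the surface simplex in $\mathbb{R}^d$ whose vertices are $\mathbb{M}_K^{1/2}\V x_j^K$; it is again a nondegenerate $(d-1)$-dimensional surface simplex since $\mathbb{M}_K^{1/2}$ is invertible.

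Next I would identify the affine mapping associated with the rescaled simplex together with its Jacobian. The mapping carrying $\hat{K}$ onto $\mathbb{M}_K^{1/2}K$ is the composition $\mathbb{M}_K^{1/2}\circ F_K$, and since $\mathbb{M}_K^{1/2}$ is a constant linear map its Jacobian matrix is $\mathbb{M}_K^{1/2}F_K'\in\mathbb{R}^{d\times(d-1)}$. Applying Lemma~\ref{volK} to $\mathbb{M}_K^{1/2}K$ then gives
\begin{equation*}
\frac{|\mathbb{M}_K^{1/2}K|}{|\hat{K}|}=\det\left(\left(\mathbb{M}_K^{1/2}F_K'\right)^T\left(\mathbb{M}_K^{1/2}F_K'\right)\right)^{1/2}.
\end{equation*}

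To finish, I would simplify the right-hand side using the symmetry of $\mathbb{M}_K^{1/2}$, so that $\left(\mathbb{M}_K^{1/2}F_K'\right)^T\left(\mathbb{M}_K^{1/2}F_K'\right)=\left(F_K'\right)^T\mathbb{M}_K^{1/2}\mathbb{M}_K^{1/2}F_K'=\left(F_K'\right)^T\mathbb{M}_K F_K'$, and then combine with $|K|_{\mathbb{M}_K}=|\mathbb{M}_K^{1/2}K|$ to obtain \eqref{thm-|K|-2}. I do not expect a genuine obstacle beyond Lemma~\ref{volK} itself; the one step deserving care is the very first one, namely the rigorous justification that rescaling by $\mathbb{M}_K^{1/2}$ turns $\mathbb{M}_K$-area into Euclidean area. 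This follows from \eqref{Mnorm} applied to the edge vectors $\V x_j^K-\V x_1^K$ spanning $K$, whose $\mathbb{M}_K$-lengths and mutual angles equal the Euclidean lengths and angles of the vectors $\mathbb{M}_K^{1/2}(\V x_j^K-\V x_1^K)$ spanning $\mathbb{M}_K^{1/2}K$.
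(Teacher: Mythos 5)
Your proposal is correct and follows essentially the same route as the paper's own proof: identify $\mathbb{M}_K^{1/2}F_K'$ as the Jacobian of the affine map from $\hat{K}$ to the rescaled simplex $\mathbb{M}_K^{1/2}K$, use the observation following \eqref{Mnorm} that $|K|_{\mathbb{M}_K}=|\mathbb{M}_K^{1/2}K|$, and then apply Lemma~\ref{volK} and the symmetry of $\mathbb{M}_K^{1/2}$. Your added care in justifying the metric-to-Euclidean conversion via the edge vectors is a slight elaboration of the paper's appeal to the same fact, not a different argument.
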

\begin{proof}
The Jacobian matrix of the affine mapping from $\hat{K}$ to $\mathbb{M}_K^{1/2}K$ is given by
\[
F_{\mathbb{M}_K,K}'=\left(\mathbb{M}_K^{1/2}E_K\right) \hat{E}^{-1}=\mathbb{M}_K^{1/2} F_K' .
\]
From the discussion following (\ref{Mnorm}), we know that the area of $K$ in the metric $\mathbb{M}_K$
is equal to the area of $\mathbb{M}_K^{1/2}K$ in the Euclidean metric. Thus, from Lemma~\ref{volK} we have 
\begin{align*}
\frac{|K|_{\mathbb{M}_K}}{|\hat{K}|} & = \frac{|\mathbb{M}_K^{1/2}K|}{|\hat{K}|} =
\det\left(\left(F_{\mathbb{M}_K,K}'\right)^T F_{\mathbb{M}_K,K}' \right)^{1/2} \\
& = \det \left(\left(\mathbb{M}_K^{1/2} F_K'\right)^T \mathbb{M}_K^{1/2} F_K'\right)^{1/2}
= \dFMF^{1/2} .
\end{align*}
\end{proof}

The following lemma gives a lower bound for the area of $K$ with respect to the metric $\mathbb{M}_K$
in terms of the minimum altitude of $K$ with respect to $\mathbb{M}_K$.

\begin{lem}
\label{lem2}
Let $a_{K,\mathbb{M}_K}$ denote the minimum altitude of $K$ with respect to $\mathbb{M}_K$. Then,
\beq\label{kak} |K|_{\mathbb{M}_K} \ge \dfrac{1}{(d-1)^{\frac{d-1}{2}}(d-1)!}~a_{K,\mathbb{M}_K}^{d-1} .\eeq
\end{lem}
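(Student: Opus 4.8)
The plan is to strip away the Riemannian metric first and reduce everything to a purely Euclidean statement about a $(d-1)$-simplex, and then bound the simplex area from below by its minimum altitude. By the observation following (\ref{Mnorm}) (and as used in Lemma~\ref{volKM}), every length measured in the metric $\mathbb{M}_K$ equals the Euclidean length of the image under $\mathbb{M}_K^{1/2}$; consequently both the area and every altitude of $K$ in the metric $\mathbb{M}_K$ coincide with the corresponding Euclidean quantities of the transformed simplex $\tilde{K} := \mathbb{M}_K^{1/2} K$. Thus $|K|_{\mathbb{M}_K} = |\tilde{K}|$ and $a_{K,\mathbb{M}_K}$ equals the Euclidean minimum altitude $a_{\tilde{K}}$, so it suffices to prove the inequality $|\tilde{K}| \ge \frac{1}{(d-1)^{(d-1)/2}(d-1)!}\, a_{\tilde{K}}^{\,d-1}$ for an arbitrary nondegenerate surface $(d-1)$-simplex $\tilde{K}$ in Euclidean $\mathbb{R}^d$ with vertices $\V x_1,\dots,\V x_d$.

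First I would express the area of $\tilde{K}$ through its edge matrix $E_{\tilde{K}}=[\V x_2-\V x_1,\dots,\V x_d-\V x_1]$ exactly as in the proof of Lemma~\ref{volK}: using the QR factorization there, with upper-triangular factor $R_K\in\mathbb{R}^{(d-1)\times(d-1)}$, one obtains $|\tilde{K}| = \frac{1}{(d-1)!}\det(R_K^T R_K)^{1/2} = \frac{1}{(d-1)!}\prod_{i=1}^{d-1}|r_{ii}|$, where $r_{ii}$ are the diagonal entries of $R_K$. The crucial point is the geometric reading of these entries: by the Gram--Schmidt interpretation of QR, $|r_{ii}|$ is the length of the component of $\V x_{i+1}-\V x_1$ orthogonal to $\mathrm{span}(\V x_2-\V x_1,\dots,\V x_i-\V x_1)$, that is, the Euclidean distance from the vertex $\V x_{i+1}$ to the affine hull of $\{\V x_1,\dots,\V x_i\}$.

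Next I would compare each of these distances with an altitude of $\tilde{K}$. The affine hull of $\{\V x_1,\dots,\V x_i\}$ is contained in the affine hull of the facet opposite $\V x_{i+1}$, namely $\{\V x_1,\dots,\V x_d\}\setminus\{\V x_{i+1}\}$; since the distance from a fixed point to a smaller set is no smaller than its distance to a larger set, $|r_{ii}|$ is at least the altitude of $\tilde{K}$ dropped from $\V x_{i+1}$, and hence $|r_{ii}|\ge a_{\tilde{K}}$ for every $i$. Multiplying these $d-1$ bounds gives $|\tilde{K}| \ge \frac{1}{(d-1)!}\, a_{\tilde{K}}^{\,d-1}$, which already implies (\ref{kak}) because $(d-1)^{(d-1)/2}\ge 1$.

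The hard part will be the careful geometric bookkeeping of the second and third steps — justifying that the QR diagonal entries equal the successive vertex-to-affine-hull distances, and that the relevant affine hulls nest so that each such distance dominates an altitude — rather than any computation. Once these identifications are in place the final estimate is immediate, and it is worth noting that this route in fact produces the sharper constant $1/(d-1)!$, so the factor $(d-1)^{(d-1)/2}$ in the stated bound is absorbed with room to spare.
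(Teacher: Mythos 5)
Your proof is correct, and it departs from the paper's at exactly one point: the key lower bound. Both arguments share the same skeleton --- reduce to the Euclidean simplex $\mathbb{M}_K^{1/2}K$ (so that metric area and metric altitudes become Euclidean ones), factor the transformed edge matrix by QR, write $|K|_{\mathbb{M}_K} = \frac{1}{(d-1)!}\det\left(R_K^TR_K\right)^{1/2}$, and note that the orthogonal factor preserves altitudes. At that point the paper expresses the determinant as the product of the singular values $s_i$ of $R_K$ and cites \cite[Lemma 5.12]{BCY} for $s_i \ge a_{R_K}/\sqrt{d-1}$; the loss of $\sqrt{d-1}$ in each of the $d-1$ factors is exactly where the constant $(d-1)^{\frac{d-1}{2}}$ in (\ref{kak}) comes from. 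You instead express the determinant as $\prod_{i}|r_{ii}|$ over the diagonal entries and bound each one by an altitude: $|r_{ii}|$ is the distance from $\V x_{i+1}$ to $\mathrm{aff}\{\V x_1,\dots,\V x_i\}$, which dominates the distance to the affine hull of the whole facet opposite $\V x_{i+1}$ (a superset), i.e.\ the altitude from that vertex, hence $|r_{ii}| \ge a_{\tilde K}$. Both supporting claims --- the Gram--Schmidt reading of the QR diagonal and the monotonicity of point-to-set distance under set inclusion --- are standard and correctly invoked, so your argument is self-contained (no external singular-value lemma) and yields the sharper constant $1/(d-1)!$, strictly better for $d \ge 3$ and identical for $d = 2$. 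This is consistent with the paper's own sharpness discussion: for the equilateral case with $d = 3$ one has $|K|_{\mathbb{M}_K} = a_{K,\mathbb{M}_K}^2/\sqrt{3}$, and your factor $1/2$ sits between that and the paper's $1/4$. Since Theorem \ref{nonsin} uses (\ref{kak}) only as a lower bound, your version would propagate through and slightly improve the constants $C_1$ and $C_2$ there.
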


\begin{proof}
From Lemma \ref{volKM} and $F_K'=E_K\hat{E}^{-1}$, we have
\begin{align*}
|K|_{\mathbb{M}_K}&= |\hat{K}|\det\FMF^{1/2} \\
&= \dfrac{|\hat{K}|}{\det(\hat{E})}\det\left(E_K^T\mathbb{M}_KE_K\right)^{1/2} \\
&= \dfrac{1}{(d-1)!} \det\left(\left(\mathbb{M}_K^{1/2}E_K\right)^T\left(\mathbb{M}_K^{1/2}E_K\right)\right)^{1/2}.\\
\end{align*}
 Let the $QR$-decomposition of $\mathbb{M}_K^{1/2}E_K$ be denoted as
\[
\mathbb{M}_K^{1/2}E_K = Q_K\left[\begin{matrix} R_K\\ \V 0\end{matrix}\right],
\] 
where $Q_K\in \mathbb{R}^{d\times d}$ is a unitary matrix, $R_K\in\mathbb{R}^{d-1 \times d-1}$ is
an upper triangular matrix, and $\V0$ is a $(d-1)$-dimensional row vector of zeros. This gives
\begin{align*}
|K|_{\mathbb{M}_K} & = 
\dfrac{1}{(d-1)!} \det\left(\left(\mathbb{M}_K^{1/2}E_K\right)^T\left(\mathbb{M}_K^{1/2}E_K\right)\right)^{1/2}\\
 &= \dfrac{1}{(d-1)!} \det\left([R_K^T ~\V0^T]Q_K^T Q_K \left[\begin{matrix} R_K\\ \V0\end{matrix}\right]\right)^{1/2}\\
 & = \dfrac{1}{(d-1)!} \det(R_K^TR_K)^{1/2}\\
 & = \dfrac{1}{(d-1)!} \prod_{i=1}^{d-1}s_i,
 \end{align*}
 where $s_i$, $i = 1, \dots, d-1$ denote the singular values of $R_K$. Additionally, by  \cite[Lemma 5.12]{BCY} we have that
$$ s_i \ge \dfrac{a_{R_K}}{\sqrt{d-1}},$$
where $a_{R_K}$ denotes the minimum altitude of the simplex formed by the columns of $R_K$. Combining these, we get
\[
|K|_{\mathbb{M}_K} \ge \dfrac{1}{(d-1)^{\frac{d-1}{2}}(d-1)!} a_{R_{K}}^{d-1}.
\]
Since $Q_K$ is a rotational matrix, the minimum altitude of $K$ with respect to the metric $\mathbb{M}_K$ is the same as the minimum altitude of the convex hull formed by the columns of $R_K$ i.e., $a_{K,\mathbb{M}_K} = a_{R_K}$. Thus, we have obtained (\ref{kak}).
\end{proof}

The relationship given in the above lemma between the area and minimum height will be used in the proof
of the nonsingularity for surface meshes in \S\ref{SEC:theor}. It is instructional to note that in two dimensions ($d = 2$),
$K$ is a line segement and both $|K|_{\mathbb{M}_K}$ and $a_{K,\mathbb{M}_K}$ represent the length
of $K$ in the metric $\mathbb{M}_K$ and are equal. In this case, the inequality (\ref{kak}) reduces to 
\[
|K|_{\mathbb{M}_K} \ge a_{K,\mathbb{M}_K},
\]
which is very sharp. For $d = 3$, (\ref{kak}) becomes
\[
|K|_{\mathbb{M}_K} \ge \dfrac{1}{4}~a_{K,\mathbb{M}_K}^{2},
\]
which is not as sharp as in two dimensions. Indeed, when $K$ is equilateral with respect to $\mathbb{M}_K$,
we have \cite{EN}
\[
|K|_{\mathbb{M}_K} = \dfrac{1}{\sqrt{3}}~a_{K,\mathbb{M}_K}^{2} .
\]

\subsection{Equidistribution and alignment conditions}

We can now define the equidistribution and alignment conditions characterizing a general nonuniform, simplicial surface mesh.  Notice that any nonuniform mesh can be viewed as a uniform one in some metric tensor.  Specifically, a mesh is uniform in some metric if all of the elements in the mesh have the same size and are similar to a reference element with respect to that metric.  In this point of view, the equidistribution condition requires that all of the elements in the mesh have the same size.
Mathematically, this can be expressed as
\beq
\label{equi1}
|K|_{\M_K}=\dfrac{\sigma_h }{N}, \quad \forall K\in\mathcal{T}_h
\eeq
where, as before, $|\cdot |_{\mathbb{M}_K}$ denotes the area of the surface with respect to the metric $\mathbb{M}_K$ and \newline $\sigma_h = \sum_{K\in\mathcal{T}_h}|K|_{\mathbb{M}_K}$. Using Lemma~\ref{volKM} and recalling
$|\hat{K}| = 1$, we have
\[
|K|_{\M_K} = \det\left(\left(F_K'\right)^T\M_KF_K'\right)^{1/2}, \qquad
\sigma_h=\displaystyle\sum_{K\in\mathcal{T}_h}\det\left(\left(F_K'\right)^T\M_KF_K'\right)^{1/2}.
\]
 Thus, the equidistribution condition (\ref{equi1}) becomes
\beq
\det\left(\left(F_K'\right)^T\M_KF_K'\right)^{1/2}=\dfrac{\sigma_h}{N}, \quad \forall K \in \mathcal{T}_h.
\label{equ}
\eeq

The alignment condition, on the other hand, requires that all of the elements $K\in\mathcal{T}_h$ be similar to the reference element $\hat{K}$. Notice that any element $K$ is similar to $\hat{K}$ if and only if $F_K:\hat{K}\to K$ is composed by dilation, rotation, and translation, or equivalently, $F_K'$ is composed by dilation and rotation. Mathematically, $F_K'$ can therefore be expressed as
\beq
F_K' =\alpha U \left[\begin{matrix}I\\ \V 0\end{matrix}\right] V^T,
\eeq
where $\alpha$ is a constant representing dilation and $U \in \mathbb{R}^{d\times d}$ and $V\in\mathbb{R}^{(d-1)\times (d-1)}$ are orthogonal matrices representing rotation.
Thus,
\[
\M_K^{1/2}~F_K' =\alpha ~\M_K^{1/2}~U \left[\begin{matrix}I\\ \V 0\end{matrix}\right] V^T.
\]
It can be verified that the above equation is equivalent to
\beq
\label{ali}
\frac{1}{d-1}\text{tr}\left(\left(F_K'\right)^T\mathbb{M}_KF_K'\right)=\dFMF^{\frac{1}{d-1}},
\quad
\forall K \in \mathcal{T}_h
\eeq
which is referred to as the alignment condition. Here, $\tr(\cdot )$ denotes the trace of a matrix.

With these two conditions, we can now formulate the meshing function.  To do so, we first consider the alignment condition (\ref{ali}) and note that an equivalent condition is
$$\frac{1}{d-1}\tr\left[\left(\left(F_K'\right)^T\mathbb{M}_K F_K'\right)^{-1}\right]=\det \left[ \left(\left(F_K'\right)^T\mathbb{M}_K F_K'\right)^{-1}\right]^{\frac{1}{d-1}}.$$
Notice that the left- and right-hand sides are the arithmetic mean and the geometric mean of the eigenvalues of the matrix $\left((F_K')^T\mathbb{M}_K F_K'\right)^{-1}$, respectively. The inequality of arithmetic and geometric means gives 
\beq \label{ine}\frac{1}{d-1}\tr\left[\FMF^{-1}\right]\ge\det \left[ \left(\left(F_K'\right)^T\mathbb{M}_K F_K'\right)^{-1}\right]^{\frac{1}{d-1}},\eeq
with equality if and only if all of the eigenvalues are equal.  From (\ref{ine}), for any general mesh which does not necessarily satisfy (\ref{ali}), we have
$$\tr\left[\FMF^{-1}\right]^{d-1} \ge (d-1)^{d-1}\dFMF^{-1},$$
and therefore
$$\tr\left[\FMF^{-1}\right]^{\frac{p(d-1)}{2}} - (d-1)^{\frac{p(d-1)}{2}}\dFMF^{-\frac{p}{2}}\ge 0,$$
where $p>0$ is a dimensionless parameter which has been added to agree with the equidistribution energy function below.  Then, we define the alignment energy function as
\begin{align}
\label{Iali}
\notag I_{ali}=\displaystyle\sum_{K\in\mathcal{T}_h}&|\hat{K}|\dFMF^{\frac{1}{2}}
\tr\left[\FMF^{-1}\right]^{\frac{p(d-1)}{2}} \\
&- (d-1)^{\frac{p(d-1)}{2}}\displaystyle\sum_{K\in\mathcal{T}_h}|\hat{K}|\dFMF^{\frac{1-p}{2}} ,
\end{align}
whose minimization will result in a mesh that closely satisfies the alignment condition (\ref{ali}). One may notice that $|\hat{K}|\dFMF^{\frac{1}{2}} = |K|_{\mathbb{M}_K}$ has been added as a weight.  

Similarly, we consider the equidistribution condition (\ref{equ}). From H\"older's inequality, for any $p>1$ we have
\beq
\label{Holders}
\displaystyle\sum_{K\in\T_h}\dfrac{|K|_{\mathbb{M}_K}}{\sigma_h}\cdot \dfrac{1}{\dFMF^{1/2}}\le\left( \displaystyle\sum_{K\in\mathcal{T}_h} \dfrac{|K|_{\mathbb{M}_K}}{\sigma_h} \cdot \dfrac{1}{\dFMF^{p/2 }}\right)^{\frac{1}{p}},
\eeq
with equality if and only if 
\[
\dFMF^{-1/2}= \text{constant},\quad\quad \forall~K\in\mathcal{T}_h.
\]
That is, minimizing the difference between the left-hand side and the right-hand side of (\ref{Holders}) tends to make $\dFMF^{-1/2}$ constant for all $K\in\mathcal{T}_h$. Noticing that the left-hand side of (\ref{Holders}) is simply $N/\sigma_h$, we can rewrite this inequality as
\[
\left(\frac{N}{\sigma_h}\right)^p\cdot\sigma_h \le\displaystyle\sum_{K\in\mathcal{T}_h}|\hat{K}| \dFMF^{\frac{1-p}{2}}.
\]
We can consider $\sigma_h$ constant since $\sigma_h\approx \displaystyle\int_{S} \det(\M(\V x))^{1/2}d\V x$ and hence it only weakly depends on the mesh.  Therefore, we define the equidistribution energy function as
\beq
\label{Ieq}
I_{eq} = \left(d-1\right)^{\frac{p(d-1)}{2}}\displaystyle\sum_{K\in\T_h} |\hat{K}|\dFMF^{\frac{1-p}{2}},
\eeq
whose minimization will result in a mesh that closely satisfies the equidistribution condition.

\subsection{Energy function for combined equidistribution and alignment}

We now have two functions, one for each of equidistribution and alignment. Our goal is to formulate a single meshing function for which minimizing will result in a mesh that closely satisfies both conditions.  One way to ensure this is to average (\ref{Iali}) and (\ref{Ieq}), that is, define $I_h=\theta I_{ali}+(1-\theta) I_{eq}$ for $\theta\in[0,1]$. This leads to
\begin{align}
I_h 
& =\theta \displaystyle\sum_{K\in\mathcal{T}_h} |\hat{K}| \dFMF^{\frac 1 2}
\text{tr}\left[\FMF^{-1}\right]^{\frac{p(d-1)}{2}}
\notag \\
& \quad  + (1-2\theta) (d-1)^{\frac{p(d-1)}{2}} \displaystyle\sum_{K\in\T_h} |\hat{K}| \dFMF^{\frac{1-p}{2}},
\label{Ih-1}
\end{align}
where $p > 1$ and $\theta \in [0, 1]$ are dimensionless parameters, with the latter balancing the equidistribution and alignment conditions for which full alignment is achieved when $\theta = 1$ and full equidistribution is achieved when $\theta=0$. We can write (\ref{Ih-1}) as
\begin{equation}
I_h=\displaystyle \sum_{K\in \mathcal{T}_h} |\hat{K}| \dFMF^{\frac{1}{2}} \tilde{G}_K,
\label{Ih-2-0}
\end{equation}
where
\begin{equation}
\tilde{G}_K = \theta~\tr\left[\FMF^{-1}\right]^{\frac{p(d-1)}{2}} + (1-2\theta) (d-1)^{\frac{p(d-1)}{2}} \dFMF^{-\frac{p}{2}}.
\label{Ih-2-1}
\end{equation}

We remark that for $0<\theta\le \frac{1}{2}$ and $p> 1$, $\tilde{G}_K$ is coercive; see the definition of coercivity
in Section \ref{SEC:theor}. As can be seen therein, coercivity is an important property when proving mesh nonsingularity. 
It is also instructional to point out that the function (\ref{Ih-1}) is very similar to a Riemann sum
of the meshing function developed in \cite{H} for bulk meshes based on equidistribution and alignment.
One of the main differences is that $\FMF$ cannot be simplified in (\ref{Ih-1}) since it is not a square matrix
as it is in the bulk mesh case. Additionally, the constant terms and exponents that contain $d$ are $(d-1)$
in (\ref{Ih-1}) instead of $d$ in the bulk mesh case. The functional of \cite{H} has been proven to work well
for a variety of problems \cite{HR}.

\section{Moving mesh equations for surface meshes}
\label{SEC:MMPDE}

In this section we describe the surface MMPDE method used to find the minimizer of (\ref{Ih-1}).  In principle, we can directly minimize it. However, this minimization problem can be extremely difficult to solve since (\ref{Ih-1})
is highly nonlinear in general.  We employ here the MMPDE approach (a time transient approach) to find
the minimizer and define the moving mesh equation as the gradient system of the meshing function.

\subsection{Gradient of meshing energy}

Motivated by the function (\ref{Ih-1}), we consider meshing functions in a general form
(\ref{Ih-2-0}), i.e.,
\begin{equation}
I_h=\displaystyle \sum_{K\in \mathcal{T}_h} |\hat{K}| \dFMF^{\frac{1}{2}} \tilde{G}_K
\equiv \displaystyle \sum_{K\in \mathcal{T}_h} G(\J_K, r_K),
\label{Ih-2}
\end{equation}
where $\tilde{G}_K$ is a given smooth function of 
\[
\J_K = \left(\left(F_K'\right)^T\mathbb{M}_KF_K'\right)^{-1}, \quad r_K = \dFMF^{-1},
\]
that is, $\tilde{G}_K = \tilde{G}(\J_K, r_K)$, and  
\begin{equation}
G(\J_K, r_K) = |\hat{K}|r_K^{-\frac{1}{2}} \tilde{G}_K.
\label{G-1}
\end{equation}
Indeed, a special example is (\ref{Ih-2-1}) but $\tilde{G}_K$ can be chosen differently.
Moreover, both $\mathbb{J}_K$ and $r_K$ depend on the coordinates of the vertices of the physical element $K$
and hence $G$ is a function of them, i.e., $G\left(\J_K, r_K\right)$ can be expressed as
\beq
G\left(\J_K, r_K\right) = G_K\left(\V x_1^K, \dots, \V x_d^K\right),
\eeq
where $\V x_i^K \in \mathbb{R}^d$ for $i =1, \dots, d$ are the coordinations of the vertices of $K$. 
As a consequence, the sum in ($\ref{Ih-2}$) is a function of the coordinates of all vertices of
the physical mesh $\mathcal{T}_h$, i.e.,
\beq
I_h\left(\V x_1, \dots, \V x_{N_v}\right)  =\displaystyle\sum\limits_{K\in\mathcal{T}_h} G_K\left(\V x_1^K, \dots, \V x_d^K\right),
\eeq
where $\V x_i \in \mathbb{R}^d$ for $i =1, \dots, N_v$ are the coordinates of the vertices  of the mesh with global indices. One of the underlying keys to our approach is to find the derivatives of $I_h$ with respect to the physical coordinates $\V x_1, \dots, \V x_{N_v}$ which requires elementwise derivatives of $G_K$ with respect to $\V x_1^K, \dots,\V x_d^K$.
That is,
\beq
\dfrac{\partial I_h}{\partial \V x_i} = \displaystyle \sum_{K\in \mathcal{T}_h} \dfrac{\partial G_K}{\partial \V x_i}
=  \sum_{K \in \omega_i} \dfrac{\partial G_K}{\partial \V x_{i_K}^K},\quad i = 1, \dots, N_v
\label{Ih_xi}
\eeq
where $i_K$ denotes the local index of vertex $\V{x}_i$ in $K$ and $\omega_i$ is the element patch associated
with $\V{x}_i$.

 In order to calculate the necessary derivatives, we recall some definitions and properties of scalar-by-matrix differentiation (cf. \cite{HK2} for details).
Let $f = f(A)$ be a scalar function of a matrix $A\in \mathbb{R}^{m\times n}$. Then the scalar-by-matrix derivative of $f$ with respect to $A$ is defined as
\beq
\dfrac{\partial f}{\partial A} = 
\begin{bmatrix}
\frac{\partial f}{\partial A_{11}} & \cdots & \frac{\partial f}{\partial A_{m1}}\\
\vdots & & \vdots\\
\frac{\partial f}{\partial A_{1n}} & \cdots & \frac{\partial f}{\partial A_{mn}}\\
\end{bmatrix}_{n\times m}
\quad \text{or} \quad \left(\dfrac{\partial f}{\partial A}\right)_{i,j} = \dfrac{\partial f}{\partial A_{j,i}}~.
\eeq
The chain rule of differentiation with respect to $t$ is
\beq
\dfrac{\partial f}{\partial t} = \sum_{ij} \dfrac{\partial f}{\partial A_{j,i}} \dfrac{\partial A_{j,i}}{\partial t} = \sum_{ij} \left(\dfrac{\partial f}{\partial A}\right)_{i,j}\dfrac{\partial A_{j,i}}{\partial t} = \tr\left(\dfrac{\partial f}{\partial A}\dfrac{\partial A}{\partial t}\right) .
\eeq
With this and when $A$ is a square matrix, the following properties have been proven in \cite{HK2},
\beq\label{deriv}
\dfrac{\partial \tr\left(A\right)}{\partial A} = I, \qquad\frac{\partial A^{-1}}{\partial t} = - A^{-1} \frac{\partial A}{\partial t} A^{-1},\quad
\frac{\partial \det(A)}{\partial t} = \det(A) ~\text{tr}\left ( A^{-1} \frac{\partial A}{\partial t} \right ).
\eeq
Using the above, we can find the expressions for $\frac{\partial G}{\partial \mathbb{J}}$ and $\frac{\partial G}{\partial r}$ which are needed to compute (\ref{Ih_xi}).
For the function (\ref{Ih-1}), the first derivatives of $G$ are given by
\beq
\label{der}
\begin{cases}
 \dfrac{\partial G}{\partial \J} & = \dfrac{\theta p(d-1) }{2} |\hat{K}|r^{-\frac{1}{2}}  \text{tr}(\J) ^{\frac{p(d-1)-2}{2}} I, \\
\vspace{.005cm}\\
\dfrac{\partial G}{\partial r} & = -\dfrac{\theta}{2} |\hat{K}|r^{-\frac{3}{2}} \text{tr}(\J)^{\frac{p(d-1)}{2}} 
+ \dfrac{p-1}{2} (1-2\theta) (d-1)^{\frac{p(d-1)}{2}}|\hat{K}| r^{\frac{p-3}{2}}.
\end{cases}
\eeq

\subsection{Derivatives of the meshing function with respect to the physical coordinates }
\label{discretization}

From (\ref{Ih_xi}), we can see that we will need $\partial G_K/\partial \V x_{i_K}^K$ to compute
$\partial I_h/\partial \V x_i$. The former can be obtained once we know the derivatives of $G_K$ with
respect to the coordinates of all vertices of $K$, i.e.,
\[
\frac{\partial G_K}{\partial [\V x_1^K,\V  x_2^K, \dots ,\V  x_d^K]}
= \begin{bmatrix} \dfrac{\partial G_K}{\partial \V x_1^K} \\ \vdots \\ \dfrac{\partial G_K}{\partial \V x_d^K}\end{bmatrix} .
\]
The derivation of these derivatives is given in Appendix \ref{SEC:discrederiv}. They are given as
\begin{align}
\begin{bmatrix} \dfrac{\partial G_K}{\partial \V x_2^K}\\ \vdots \\ \dfrac{\partial G_K}{\partial \V x_d^K}\end{bmatrix}
= & -2 \left(E_K^T\mathbb{M}_K E_K\right)^{-1} \hat{E}^T
\frac{\partial G_K}{\partial \J} \hat{E} (E_K^T\mathbb{M}_K E_K)^{-1} E_K^T\mathbb{M}_K
\notag \\
& \quad  -2 \frac{\det(\hat{E})^{2}}{\det\left(E_K^T \mathbb{M}_K E_K\right)}\frac{\partial G_K}{\partial r} \left(E_K^T \mathbb{M}_K E_K\right)^{-1} E_K^T\mathbb{M}_K\notag\\
&\quad+\frac{1}{d} \displaystyle\sum_{j=1}^d \text{tr} \left (\frac{\partial G_K}{\partial \mathbb{M}_K}  \mathbb{M}_{j,K} \right )
\begin{bmatrix} \frac{\partial \phi_{j,K}}{\partial \V x} \\ \vdots \\ \frac{\partial \phi_{j,K}}{\partial \V x} \end{bmatrix},
\label{vel2}
\\
\dfrac{\partial G_K}{\partial \V x_1^K}\
= &  -\sum_{j=2}^d \dfrac{\partial G_K}{\partial \V x_j^K}
+ \displaystyle\sum_{j=1}^d \text{tr} \left (\frac{\partial G_K}{\partial \mathbb{M}_K}  \mathbb{M}_{j,K} \right )
\frac{\partial \phi_{j,K}}{\partial \V x} ,
\label{vel1}
\end{align}
where ${\partial G_K}/{\partial \J}$ and ${\partial G_K}/{\partial r}$ are given in (\ref{der}), 
$\phi_{j,K}$ is the linear basis function associated with $\V{x}_j^K$, $\mathbb{M}_{j,K} = \mathbb{M}(\V{x}_j^K)$, and
\begin{align}
\frac{\partial G_K}{\partial \mathbb{M}_K} 
& = - E_K (E_K^T\mathbb{M}_K E_K)^{-1} \hat{E}^T
\frac{\partial G_K}{\partial \J} \hat{E} (E_K^T\mathbb{M}_K E_K)^{-1} E_K^T
\notag \\
& \quad  -  \frac{\det(\hat{E})^{2}}{\det\left(E_K^T \mathbb{M}_K E_K\right)}\frac{\partial G_K}{\partial r} E_K\left(E_K^T \mathbb{M}_K E_K\right)^{-1} E_K^T ,
\label{der-4}
\\
\begin{bmatrix}\frac{\partial \phi_{2,K}}{\partial \V x}\\ \vdots \\ \frac{\partial \phi_{d,K}}{\partial \V x}\end{bmatrix}
& =(E_K^TE_K)^{-1}E_K^T, \qquad 
\dfrac{\partial \phi_{1,K}}{\partial \V x}=-\displaystyle\sum_{j=2}^d\dfrac{\partial \phi_{j,K}}{\partial \V x} .
\label{der-5}
\end{align}
Having computed $\partial G_K/\partial \V x_{j}^K$ ($j=1, ..., d$) for all elements using (\ref{vel2}) and (\ref{vel1}), 
we can obtain $\partial I_h/\partial \V x_i$ from (\ref{Ih_xi}).


\subsection{Surface moving mesh equations}
\label{MMeqn}
   
As mentioned above, we employ a surface MMPDE method to minimize the meshing function ($\ref{Ih-1}$)
or a more general form (\ref{Ih-2}).  An MMPDE is a mesh equation that involves mesh speed.  There are various formulations of MMPDEs; we focus here on the approach where the surface MMPDE is defined as the modified gradient system of the meshing function. A distinct feature for surface meshes, other than bulk meshes, is that the nodes need to stay on the surface. By Section \ref{discretization} we may assume that we have the matrix
\[
\dfrac{\partial I_h}{\partial  \V{x}_i},
\quad {i=1,\cdots, N_v}.
\]
Let $\Phi( \V{x})=0$ denote the surface, where $\Phi$ can be defined through an analytical expression or a numerical representation such as by spline functions.  Then for the vertices to stay on the surface we should have
$\Phi( \V{x}_i)=0, \; i = 1, ..., N_v$ or at least 
\begin{equation}
\frac{d \V{x}_i}{d t} \cdot \nabla \Phi( \V{x}_i) = 0, \quad i = 1, ..., N_v
\label{surf-1}
\end{equation}
where $\frac{d \V{x}_i}{d t}$ is the nodal mesh velocity.
Following the MMPDE approach, we would define the mesh equation as the gradient system of $I_h$, i.e.,
\begin{equation}
\frac{d \V{x}_i}{d t} = - \frac{P_i}{\tau} \left ( \frac{\partial I_h}{\partial \V{x}_i} \right )^T, \quad i = 1, ..., N_v
\label{MMPDE-0}
\end{equation}
where $P_i$ is a positive scalar function used to make the equation have desired invariance properties and $\tau>0$ is a constant parameter used for adjusting the time scale of mesh movement.
Obviously, this does not satisfy (\ref{surf-1}). Here we propose to project the velocities in (\ref{MMPDE-0})
onto the surface and define the surface moving mesh equation as
\begin{equation}
\frac{d \V{x}_i}{d t} = - \frac{P_i}{\tau} \left [ \left ( \frac{\partial I_h}{\partial \V{x}_i} \right )^T - \left (\left (\frac{\partial I_h}{\partial \V{x}_i}\right )^T \cdot \V{n}_i \right ) \V{n}_i \right ], \quad i = 1, ..., N_v
\label{MMPDE-1}
\end{equation}
where 
$\V{n}_i = {\nabla \Phi (\V{x}_i)}/{\| \nabla \Phi (\V{x}_i) \|}$ is the unit normal to the surface at $\V{x}_i$
and the difference inside the square bracket is the projection of the vector ${\partial I_h}/{\partial \V{x}_i}$
onto the tangential plane of the surface at $\V{x}_i$.
Notice that this surface MMPDE inherently ensures that (\ref{surf-1}) be satisfied or, in words,
{\em the nodes stay on the surface during the mesh movement}. Moreover, it is important to note that (\ref{MMPDE-1}) only utilizes the unit normal vectors of the surface whose computation does not require explicit parameterization or analytical expression of the surface. As mentioned, for surfaces without an analytical expression, spline functions may be used to approximate the gradient for (\ref{MMPDE-1}). Although the numerical examples presented in this work have explicit parameterizations, it is a goal for our future work to study the use of spline functions to approximate
the unit normal vectors for surfaces represented by simplicial background meshes.

Using (\ref{Ih_xi}) we can rewrite the above equation in a compact form as
\begin{equation}
\dfrac{d \V x_i}{dt}=\dfrac{P_i}{\tau}\sum_{K\in \omega_i}\V v_{i_K}^K,\quad i = 1, ..., N_v
\label{MMPDEx}
\end{equation} 
where $\V v_{i_K}^K \in \mathbb{R}^d$ is the local mesh velocities contributed by $K$ to $\V x_{i_K}^K$
and has the expressions
\begin{align}
\V v_{i_K}^K = - \left ( \frac{\partial G_K}{\partial \V{x}_{i_K}^K} \right )^T + \left (\left (\frac{\partial G_K}{\partial \V{x}_{i_K}^K}\right )^T \cdot \V{n}_{i_K} \right ) \V{n}_{i_K} ,
\label{v-0}
\end{align}
and ${\partial G_K}/{\partial \V x_{i_K}^K}$ is given in (\ref{vel2}) and (\ref{vel1}).

The surface MMPDE (\ref{MMPDEx}) must be modified properly for boundary vertices when $S$ has a boundary. For fixed boundary vertices, the corresponding equation is replaced by
\[
\frac{d \V x_i}{dt} = 0.
\]
The velocities for other boundary vertices should be modified such that they slide on the boundary.

With proper modification of the boundary vertices, the system (\ref{MMPDEx}) can be integrated in time. To do so, one first starts by calculating the edge matrices $E_K$ for all elements and $\hat{E}$ for the reference element.
One can then readily calculate (\ref{der}) which is needed for (\ref{vel2}) and (\ref{vel1}). Then one can integrate (\ref{MMPDEx}) in time. For this work we use Matlab's ODE solvers ode45 and ode15s. The explicit scheme, ode45, implements a 4(5)-order Runge-Kutta method with a variable time step. The implicit scheme, ode15s, is a variable time step and variable-order solver based on the numerical differentiation formulas of orders 1 to 5. All of the numerical examples in this paper use ode45 although both ode45 and ode15s have been tested and proven to work very well in computation.

\section{Nonsingularity of surface moving meshes}
\label{SEC:theor}
In this section we study the nonsingularity of the mesh trajectory and the existence of limiting meshes as $t \to \infty$ for the MMPDE (\ref{MMPDEx}).

\subsection{Equivalent measure of minimum height}
We begin the theoretical analysis by establishing the relation between $\| \FMF^{-1}\|$ and
the minimum altitude of $K$ with respect to $\mathbb{M}_K$.
\begin{lem}
\label{lem1}
There holds
\beq
\dfrac{\hat{a}^2}{a_{K,\mathbb{M}_K}^2} \le \left \| \FMF^{-1}\right\| \le \dfrac{(d-1)^2\hat{a}^2}{a_{K,\mathbb{M}_K}^2},
\label{lem1-1}
\eeq
where $\hat{a}$ is the altitude of $\hat{K}$ and $a_{K,\mathbb{M}_K}$ is the minimum altitude of $K$ with respect to the metric $\mathbb{M}_K$.
\end{lem}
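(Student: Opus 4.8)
The plan is to recognize $\left\|\FMF{^{-1}}\right\|$ as a spectral norm and translate the stated altitude bounds into two-sided bounds on the smallest singular value of the Jacobian of the scaled affine map. Since $\FMF{} = \left(\mathbb{M}_K^{1/2}F_K'\right)^T\left(\mathbb{M}_K^{1/2}F_K'\right)$ and $F_{\mathbb{M}_K,K}' = \mathbb{M}_K^{1/2}F_K'$ is the Jacobian of the map $\hat{K}\to\mathbb{M}_K^{1/2}K$ used in Lemma~\ref{volKM}, the eigenvalues of $\FMF{}$ are the squared singular values of $F_{\mathbb{M}_K,K}'$, so $\left\|\FMF{^{-1}}\right\| = \sigma_{\min}\!\left(F_{\mathbb{M}_K,K}'\right)^{-2}$. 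Hence (\ref{lem1-1}) is equivalent to
\[
\frac{a_{K,\mathbb{M}_K}}{(d-1)\,\hat{a}}\le\sigma_{\min}\!\left(F_{\mathbb{M}_K,K}'\right)\le\frac{a_{K,\mathbb{M}_K}}{\hat{a}}.
\]
Recalling from the discussion after (\ref{Mnorm}) that $a_{K,\mathbb{M}_K}$ is the minimum Euclidean altitude of $\mathbb{M}_K^{1/2}K$, and using the QR-factorization of $\mathbb{M}_K^{1/2}E_K$ from the proof of Lemma~\ref{lem2} (whose triangular factor $R_K$ satisfies $\sigma_{\min}\!\left(F_{\mathbb{M}_K,K}'\right) = \sigma_{\min}\!\left(R_K\hat{E}^{-1}\right)$ and $a_{K,\mathbb{M}_K} = a_{R_K}$), I reduce everything to the two square $(d-1)\times(d-1)$ edge matrices $R_K$ and $\hat{E}$.

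For the lower bound on $\sigma_{\min}$ --- equivalently the \emph{upper} bound in (\ref{lem1-1}), which is the harder direction and carries the factor $(d-1)$ --- I would use submultiplicativity of the smallest singular value, $\sigma_{\min}\!\left(R_K\hat{E}^{-1}\right)\ge\sigma_{\min}(R_K)/\sigma_{\max}(\hat{E})$. The first factor is handled exactly by the tool already invoked for Lemma~\ref{lem2}: \cite[Lemma 5.12]{BCY} gives $\sigma_{\min}(R_K)\ge a_{R_K}/\sqrt{d-1}$. The second factor is a pure reference-element computation: since $\hat{K}$ is equilateral, the Gram matrix $\hat{E}^T\hat{E}$ has $\hat{\ell}^2$ on the diagonal and $\hat{\ell}^2/2$ off it (with $\hat{\ell}$ the common edge length), whose eigenvalues are $\hat{\ell}^2 d/2$ and $\hat{\ell}^2/2$, so $\sigma_{\max}(\hat{E}) = \hat{\ell}\sqrt{d/2}$; combined with $\hat{a} = \hat{\ell}\sqrt{d/(2(d-1))}$ this yields the clean identity $\sigma_{\max}(\hat{E}) = \sqrt{d-1}\,\hat{a}$. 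Multiplying the two factors gives $\sigma_{\min}\!\left(F_{\mathbb{M}_K,K}'\right)\ge a_{K,\mathbb{M}_K}/((d-1)\,\hat{a})$.

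For the upper bound on $\sigma_{\min}$ --- the \emph{lower} bound in (\ref{lem1-1}), the easy direction --- I would argue through the gradients of the barycentric basis functions. Writing $\tilde{\phi}_j$ and $\hat{\phi}_j$ for the linear basis functions of $\mathbb{M}_K^{1/2}K$ and $\hat{K}$, affine invariance of barycentric coordinates gives $\nabla\hat{\phi}_j = \left(F_{\mathbb{M}_K,K}'\right)^T\nabla\tilde{\phi}_j$, where $\nabla\tilde{\phi}_j$ lies in the range of $F_{\mathbb{M}_K,K}'$; hence $\|\nabla\hat{\phi}_j\|\ge\sigma_{\min}\!\left(F_{\mathbb{M}_K,K}'\right)\|\nabla\tilde{\phi}_j\|$. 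The geometric fact that a barycentric gradient has magnitude equal to the reciprocal of the corresponding altitude gives $\|\nabla\hat{\phi}_j\| = 1/\hat{a}$ (all altitudes of $\hat{K}$ equal $\hat{a}$) and $\|\nabla\tilde{\phi}_j\| = 1/\tilde{h}_j$; choosing $j$ to realize the minimum altitude of $\mathbb{M}_K^{1/2}K$ then yields $\sigma_{\min}\!\left(F_{\mathbb{M}_K,K}'\right)\le a_{K,\mathbb{M}_K}/\hat{a}$.

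I expect the main obstacle to be the harder (upper) bound: pinning down the reference-element constant so that the loss factors combine to \emph{exactly} $(d-1)$ rather than some larger $d$-dependent constant. The point is that the $\sqrt{d-1}$ from \cite[Lemma 5.12]{BCY} and the $\sqrt{d-1}$ in $\sigma_{\max}(\hat{E}) = \sqrt{d-1}\,\hat{a}$ multiply to the stated $(d-1)$; establishing the latter identity from the equilateral geometry is the one genuinely computational step, and one should note it is independent of the normalization chosen for $\hat{K}$ since $\sigma_{\max}(\hat{E})$ and $\hat{a}$ both scale linearly in $\hat{\ell}$.
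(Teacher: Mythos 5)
Your proof is correct, and it reaches the bounds by a genuinely different route than the paper. The two arguments share the same reduction skeleton: a QR factorization of $\mathbb{M}_K^{1/2}E_K$ (the paper factors $E_K$ and passes to the metric at the very end, a cosmetic difference) replaces the non-square Jacobian by the square matrix $R_K\hat{E}^{-1}$, and rotation invariance gives $a_{R_K}=a_{K,\mathbb{M}_K}$. But at that point the paper finishes in one stroke by citing the bulk-mesh result \cite[Lemma 4.1]{HK2}, which supplies the two-sided bound
\[
\frac{\hat{a}^2}{a_{R_K}^2} \le \left\| \left(R_K\hat{E}^{-1}\right)^{-1}\left(R_K\hat{E}^{-1}\right)^{-T}\right\| \le \frac{(d-1)^2\hat{a}^2}{a_{R_K}^2}
\]
wholesale, whereas you reprove that bound from elementary pieces: the hard direction from $\sigma_{\min}\left(R_K\hat{E}^{-1}\right)\ge\sigma_{\min}(R_K)/\sigma_{\max}(\hat{E})$, the singular-value estimate $\sigma_{\min}(R_K)\ge a_{R_K}/\sqrt{d-1}$ of \cite[Lemma 5.12]{BCY} (the tool the paper reserves for Lemma~\ref{lem2}), and the reference-element identity $\sigma_{\max}(\hat{E})=\sqrt{d-1}\,\hat{a}$, which your Gram-matrix computation establishes correctly (with $\hat{\ell}$ the edge length of $\hat{K}$, the eigenvalues are $\hat{\ell}^2 d/2$ and $\hat{\ell}^2/2$, and $\hat{a}=\hat{\ell}\sqrt{d/(2(d-1))}$); and the easy direction from the barycentric-gradient argument, which is sound precisely because you note that $\nabla\tilde{\phi}_j$ lies in the range of $F_{\mathbb{M}_K,K}'$, so the inequality $\|(F_{\mathbb{M}_K,K}')^T\nabla\tilde{\phi}_j\|\ge\sigma_{\min}(F_{\mathbb{M}_K,K}')\|\nabla\tilde{\phi}_j\|$ is legitimate for the thin matrix. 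Your version buys self-containedness and makes visible where the constant $(d-1)$ comes from --- one factor of $\sqrt{d-1}$ from each of the two estimates --- while the paper's version buys brevity by recognizing the reduced statement as exactly the known bulk-mesh lemma applied to the simplex formed by the columns of $R_K$.
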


\begin{proof}
First of all, we have
$$
\left\| \left[ \left(F_K'\right)^T F_K' \right]^{-1}\right\| = \left\| \left[\hat{E}^{-T}E_K^T E_K\hat{E}^{-1}\right]^{-1}\right\|  = \left \| \hat{E}\left[E_K^T E_K\right]^{-1}\hat{E}^{T}\right\|.$$
Now, consider the QR decomposition of $E_K$
\[
E_K = Q_K\left[\begin{matrix} R_K\\ \V0\end{matrix}\right],
\]
where $Q_K\in \mathbb{R}^{d\times d}$ is a unitary matrix, $R_K\in\mathbb{R}^{(d-1) \times (d-1)}$ is an upper triangular
matrix, and $\V0$ is a $(d-1)$-dimensional row vector of zeros.  With this we have
\begin{align*}
 \left \| \hat{E}\left[E_K^T E_K\right]^{-1}\hat{E}^{T}\right\|& =  \left \| \hat{E}\left([R_K^T ~\V0^T] Q_K^T Q_K \left[\begin{matrix} R_K\\ \V0\end{matrix}\right] \right)^{-1}\hat{E}^{T}\right\|\\
 & = \left \| \hat{E}R_K^{-1}R_K^{-T}\hat{E}^{T}\right\|\\
 & = \left \| \left(R_K\hat{E}^{-1}\right)^{-1}\left(R_K\hat{E}^{-1}\right)^{-T}\right\| .
 \end{align*}
 By  \cite[Lemma 4.1]{HK2} we have
 $$\dfrac{\hat{a}^2}{a_{R_K}^2} \le \left\| \left(R_K\hat{E}^{-1}\right)^{-1}\left(R_K\hat{E}^{-1}\right)^{-T}\right\|\le \dfrac{(d-1)^2\hat{a}^2}{a_{R_K}^2},$$
where $a_{R_K}$ is the minimum altitude of the simplex formed by the columns of $R_K$.
Since $Q_K$ is a rotation matrix, $a_{R_K}$ is the same as $a_K$, the minimum altitude of $K$ with respect to the Euclidean metric. Combining the above results, we get
\[
\dfrac{\hat{a}^2}{a_{K}^2} \le \left \|  \left[ \left(F_K'\right)^T F_K' \right]^{-1} \right\| \le \dfrac{(d-1)^2\hat{a}^2}{a_{K}^2}.
\]
The inequality (\ref{lem1-1}) follows from this and the observation that the geometric properties of $K$ with respect
to the metric $\mathbb{M}_K$ are the same as those of $\mathbb{M}_K^{1/2}K$ with respect to the Euclidean metric.
\end{proof}

Lemma~\ref{lem1} indicates that if $\hat{K}$ is chosen to satisfy $|\hat{K}| = \mathcal{O}(1)$ then
\beq
\left \| \FMF^{-1}\right\|\sim a_{K,\mathbb{M}_K}^{-2}.
\eeq

\subsection{Mesh nonsingularity}

We now consider the MMPDE (\ref{MMPDEx}). Recall that the velocities for the boundary vertices need to
be modified in order for them to stay on the boundary. However, the analysis is similar with or without modifications. Hence, for simplicity we do not consider modifications in the analysis.
We also note that for theoretical purposes, we assume that $\hat{K}$ is taken to satisfy
$|\hat{K}| = \frac{1}{N}$ instead of being unitary as we have been considering thus far.
This change does not affect the actual computation. However, since typically we expect $|K| = \mathcal{O}(1/N)$,
the assumption $|\hat{K}| = \frac{1}{N}$ will likely lead to $F_K' = \mathcal{O}(1)$ and thus $I_h(\mathcal{T}_h(0))$
(the value of $I_h$ on the initial mesh $\mathcal{T}_h(0)$) stays $\mathcal{O}(1)$. 
On the other hand, if $|\hat{K}|  = 1$ (unitary), we have $F_K' = \mathcal{O}(1/N)$ and $I_h(\mathcal{T}_h(0))$
will depend strongly on $N$.

In the following analysis, the mesh at time $t$ is denoted by $\mathcal{T}_h(t) = \left(\V x_1(t), \dots , \V x_{N_v}(t)\right)$.

\begin{thm}
\label{nonsin}
Assume that the meshing function in the form (\ref{Ih-2}) satisfies the coercivity condition
\beq
\label{coer}
\tilde{G} \left(\mathbb{J}, \det\left(\mathbb{J}\right),\V x \right) \ge \alpha~ \left (\tr\left[\FMF^{-1}\right]\right )^q
- \beta, \quad \quad \forall \V x \in S
\eeq
where $q> (d-1)/2$, $\alpha>0$, and $\beta\ge 0$ are constants.  We also assume that $\hat{K}$ is equilateral and $|\hat{K}| = \frac{1}{N}$. Then if the elements of the mesh trajectory of the MMPDE (\ref{MMPDEx}) have positive areas initially, they will have positive areas for all time. Moreover, their minimum altitudes in the metric $\mathbb{M}_K$ and their areas in the Euclidean metric are bounded below by
\begin{align}
\label{amin}
a_{K,\mathbb{M}_K} &\ge C_1 ~\left[I_h(\mathcal{T}_h(0)) + \beta \bar{m}^{d/2} |S|\right]^{-\frac{1}{2q-d+1}} ~N^{-\frac{2q}{(d-1)(2q-d+1)}}, \\[10pt]
\label{kmin}
|K| & \ge C_2~\left[I_h(\mathcal{T}_h(0)) + \beta \bar{m}^{d/2} |S|\right]^{-\frac{d-1}{2q-d+1}} ~N^{-\frac{2q}{2q-d+1}}~\overline{m}~^{-\frac{d}{2}},
\end{align}
where 
\beq
C_1 = \left(\frac{\alpha~ d^{\frac{q(d - 2)}{d-1}}~(d-1)!^{\frac{2q-d+1}{d-1}}}{(d-1)^{\frac{d-1+2q}{2}}}\right)^{\frac{1}{2q-d+1}}, \qquad C_2 = \dfrac{C_1^{d-1}}{(d-1)^{\frac{d-1}{2}}(d-1)!}.
\eeq
\end{thm}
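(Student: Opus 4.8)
The plan is to use the fact that the surface MMPDE (\ref{MMPDEx})--(\ref{MMPDE-1}) is a projected gradient flow, so that $I_h$ serves as a Lyapunov function, and then to convert the resulting upper bound $I_h(\mathcal{T}_h(t))\le I_h(\mathcal{T}_h(0))$ into a pointwise lower bound on each element's altitude and area via the coercivity condition (\ref{coer}) together with Lemmas~\ref{volKM}, \ref{lem2}, and \ref{lem1}. First I would establish the energy decay: writing $\V g_i=(\partial I_h/\partial \V x_i)^T$ and differentiating $I_h$ along (\ref{MMPDE-1}), the tangential projection $I-\V n_i\V n_i^T$ is symmetric and idempotent, so that $\frac{d I_h}{dt}=-\sum_i \frac{P_i}{\tau}\,\|\V g_i-(\V g_i\cdot\V n_i)\V n_i\|^2\le 0$. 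Hence $I_h$ is nonincreasing and $I_h(\mathcal{T}_h(t))\le I_h(\mathcal{T}_h(0))$ on the interval of existence of the trajectory.

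Next I would isolate a single element. Using the general form (\ref{Ih-2}), the weight identity $|\hat{K}|\det((F_K')^T\mathbb{M}_K F_K')^{1/2}=|K|_{\mathbb{M}_K}$ from Lemma~\ref{volKM}, and the coercivity bound (\ref{coer}), each summand obeys $G(\mathbb{J}_K,r_K)\ge |K|_{\mathbb{M}_K}\bigl(\alpha(\text{tr}\,\mathbb{J}_K)^q-\beta\bigr)$, where $\mathbb{J}_K=((F_K')^T\mathbb{M}_K F_K')^{-1}$. Summing over $K$, discarding the nonnegative contribution of every element except one, and controlling the $\beta$-term through $\sum_K|K|_{\mathbb{M}_K}=\sigma_h\le\overline{m}^{d/2}|S|$ (a consequence of (\ref{Mbound})), I obtain the single-element estimate $\alpha\,|K|_{\mathbb{M}_K}(\text{tr}\,\mathbb{J}_K)^q\le I_h(\mathcal{T}_h(0))+\beta\,\overline{m}^{d/2}|S|=:C$.

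The decisive step is to turn this product estimate into a lower bound on the minimum altitude. By Lemma~\ref{lem2}, $|K|_{\mathbb{M}_K}\ge c\,a_{K,\mathbb{M}_K}^{d-1}$ with $c=\bigl((d-1)^{(d-1)/2}(d-1)!\bigr)^{-1}$, while by Lemma~\ref{lem1}, $\text{tr}\,\mathbb{J}_K\ge\|\mathbb{J}_K\|\ge\hat{a}^2/a_{K,\mathbb{M}_K}^2$. Multiplying these yields $|K|_{\mathbb{M}_K}(\text{tr}\,\mathbb{J}_K)^q\ge c\,\hat{a}^{2q}\,a_{K,\mathbb{M}_K}^{\,d-1-2q}$. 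Here the hypothesis $q>(d-1)/2$ is exactly what is needed: the exponent $d-1-2q$ is negative, so the right-hand side diverges as $a_{K,\mathbb{M}_K}\to 0$, and comparison with $C/\alpha$ forces $a_{K,\mathbb{M}_K}\ge\bigl(\alpha c\,\hat{a}^{2q}/C\bigr)^{1/(2q-d+1)}$. Substituting the equilateral reference element with $|\hat{K}|=1/N$, for which $\hat{a}\sim N^{-1/(d-1)}$, reproduces the $N$- and $C$-exponents of (\ref{amin}) after collecting $-\tfrac{1}{d-1}-\tfrac{1}{2q-d+1}=-\tfrac{2q}{(d-1)(2q-d+1)}$. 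The Euclidean bound (\ref{kmin}) then follows from $|K|\ge\overline{m}^{-d/2}|K|_{\mathbb{M}_K}\ge\overline{m}^{-d/2}c\,a_{K,\mathbb{M}_K}^{d-1}$ (again via (\ref{Mbound})) by raising the altitude bound to the power $d-1$, which also accounts for the form of $C_2$.

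Finally, positivity for all time is a continuation argument. Because the lower bounds above depend only on $I_h(\mathcal{T}_h(0))$, $N$, $\overline{m}$, and $|S|$, and not on $t$, on the maximal interval of existence no element area can approach zero; since the vertices remain on the bounded surface $S$ and the edge matrices $E_K$ therefore stay uniformly well-conditioned, the right-hand side of (\ref{MMPDEx}) remains bounded and the solution extends to all $t>0$ while staying nonsingular. I expect the main obstacle to be the altitude-conversion step, namely combining the two one-sided estimates of Lemmas~\ref{lem2} and \ref{lem1} into a single sharp power of $a_{K,\mathbb{M}_K}$ and confirming that $q>(d-1)/2$ makes the product genuinely coercive in the degeneration limit, together with rendering the continuation argument rigorous so that the flow provably cannot reach the singular boundary in finite time.
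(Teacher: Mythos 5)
Your proposal is correct and follows essentially the same route as the paper's proof: energy decay from the projected-gradient structure so that $I_h(\mathcal{T}_h(t))\le I_h(\mathcal{T}_h(0))$, then coercivity (\ref{coer}) combined with Lemmas~\ref{volKM}, \ref{lem2}, and \ref{lem1} (via $\tr(\mathbb{J}_K)\ge\|\mathbb{J}_K\|$) to force the lower bound on $a_{K,\mathbb{M}_K}$, substitution of the equilateral reference element with $|\hat{K}|=1/N$, the bound $|K|_{\mathbb{M}_K}\le\overline{m}^{d/2}|K|$ for (\ref{kmin}), and a continuation argument for positivity of areas. The only differences are cosmetic: you multiply the two lemma bounds into a single inequality rather than applying them sequentially, and you leave the explicit constants $C_1$, $C_2$ as a routine computation from the exact formula for $\hat{a}$.
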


\begin{proof}
From (\ref{MMPDE-1}) we have
\begin{align*}
\frac{dI_h}{dt} & = \sum_{i} \frac{\partial I_h}{\partial  \V x_i} \frac{d \V x_i}{dt}
 = -\sum_{i} \dfrac{P_i}{\tau}\frac{\partial I_h}{\partial  \V x_i} \left [ \left ( \frac{\partial I_h}{\partial \V{x}_i} \right )^T - \left (\left (\frac{\partial I_h}{\partial \V{x}_i}\right )^T \cdot \V{n}_i \right ) \V{n}_i \right ]\\
& = -\sum_{i} \dfrac{P_i}{\tau} \left [ \left \| \frac{\partial I_h}{\partial \V{x}_i} \right \|^2 - \left (\left (\frac{\partial I_h}{\partial \V{x}_i}\right )^T \cdot \V{n}_i \right )^2 \right ]\\
& \le 0.\\
\end{align*}
This implies $I_h\left(\mathcal{T}_h(t)\right) \le I_h\left(\mathcal{T}_h(0)\right)$ for all $t$.
From coercivity (\ref{coer}) and Lemma \ref{lem1}, we get 
\begin{align*}
I_h\left(\mathcal{T}_h(t)\right) & \ge \alpha\displaystyle \sum_{K\in \mathcal{T}_h} |\hat{K}| \dFMF^{1/2}\left (  \tr\left[\FMF^{-1}\right]\right )^q - \beta \bar{m}^{d/2} |S|\\
& \ge \alpha\displaystyle \sum_{K\in \mathcal{T}_h} |\hat{K}| \dFMF^{1/2}  \left\|\FMF^{-1}\right\|^q - \beta \bar{m}^{d/2} |S|\\
& \ge \alpha\displaystyle \sum_{K\in \mathcal{T}_h}  |\hat{K}|\dFMF^{1/2} \dfrac{\hat{a}^{2q}}{a_{K,\mathbb{M}_K}^{2q}} - \beta \bar{m}^{d/2} |S| .
\end{align*}
By Lemma \ref{lem2}, $|\hat{K}|\dFMF^{1/2} = |K|_{\mathbb{M}_K} \ge \frac{1}{(d-1)^{\frac{d-1}{2}}(d-1)!}~a_{K,\mathbb{M}_K}^{d-1}$, thus
\beq I_h\left(\mathcal{T}_h(t)\right) +\beta \bar{m}^{d/2} |S| \ge \dfrac{\alpha \hat{a}^{2q}}{(d-1)^{\frac{d-1}{2}}(d-1)!} \displaystyle \sum_{K\in \mathcal{T}_h} \dfrac{1}{a_{K,\mathbb{M}_K}^{2q-d+1}}~,\eeq
and therefore 
\beq\label{akm2} a_{K,\mathbb{M}_K}^{2q-d+1} \ge \dfrac{\alpha \hat{a}^{2 q}}{(d-1)^{\frac{d-1}{2}}(d-1)!}\left(I_h\left(\mathcal{T}_h(0)\right) +\beta \bar{m}^{d/2} |S|\right)^{-1}.\eeq
Moreover, from the assumption that $\hat{K}$ is equilateral and $|\hat{K}| = \frac{1}{N} $ it follows that 
\beq
\label{ahat}
\hat{a} = \dfrac{\sqrt{d}~(d-1)!^{\frac{1}{d-1}}}{\sqrt{d-1}~d^{\frac{1}{2(d-1)}}} ~N^{-\frac{1}{d-1}}.
\eeq
Combining (\ref{akm2}) and (\ref{ahat}) we get
\beq
\label{akm}
a_{K,\mathbb{M}_K}\ge \left(\frac{\alpha~ d^{\frac{q(d - 2)}{d-1}}~(d-1)!^{\frac{2q-d+1}{d-1}}}{(d-1)^{\frac{d-1+2q}{2}}}\right)^{\frac{1}{2q-d+1}}\left[I_h(\mathcal{T}_h(0)) + \beta \bar{m}^{d/2} |S|\right]^{-\frac{1}{2q-d+1}} N^{-\frac{2q}{(d-1)(2q-d+1)}},
\eeq
which gives (\ref{amin}).

Furthermore, we have
\begin{align*}
 \frac{a_{K,\mathbb{M}_K}^{d-1}}{(d-1)^{\frac{d-1}{2}}(d-1)!} &\le |K|_{\mathbb{M}_K} = |\hat{K}|\dFMF^{1/2}\\
 & \le \overline{m}~^{d/2}|\hat{K}|\det\left(\left(F_K'\right)^TF_K'\right)^{1/2} = \overline{m}~^{d/2}|K| .
 \end{align*}
 Then (\ref{kmin}) follows from  the above inequality and (\ref{amin}).

Finally, from (\ref{vel2}) and (\ref{vel1}) it is not difficult to see that
the magnitude of the mesh velocities is bounded from above when $|K|$ is bounded from below.
As a consequence, the mesh vertices will move continuously with time and $|K|$ cannot jump over the bound to become negative. Hence, $|K|$ will stay positive if so initially.
\end{proof}

From the proof we have seen that the key points are the energy decreasing property and
the coercivity of the meshing function. The former is satisfied by the MMPDE (\ref{MMPDEx}) by design
while the latter is an assumption for the meshing function. We emphasize that the result holds for
any function satisfying the coercivity condition (\ref{coer}).

On the other hand, the condition (\ref{coer}) is satisfied by the meshing function (\ref{Ih-1})
for $0<\theta\le \frac{1}{2}$ and $p>1$ (with $q = {(d-1)p}/{2}$ and $\beta = 0$ in Theorem \ref{nonsin}).
It is interesting to point out that the role of the parameter $p$ can be explained from (\ref{amin}).
Indeed, for this case the inequality (\ref{amin}) becomes
\beq 
a_{K,\mathbb{M}_K} \ge C_1 ~\left[I_h(\mathcal{T}_h(0)) + \beta \bar{m}^{d/2} |S|\right]^{-\frac{1}{(d-1)(p-1)}} ~N^{-\frac{p}{(d-1)(p-1)}} \to C_1~N^{-\frac{1}{d-1}}, \quad p\to \infty.
\eeq 
Since $N^{-\frac{1}{d-1}}$ represents the average diameter of the elements, the above inequality implies that
the mesh becomes more uniform as $p$ gets larger.
We take $p = 1.5$, which has been found to work well for all examples we have tested.

\begin{thm}
\label{limiting}
Under the assumptions of Theorem \ref{nonsin}, for any nonsingular initial mesh, the mesh trajectory $\{\mathcal{T}_h(t), t>0\}$ of MMPDE (\ref{MMPDEx}) has the following properties.
\begin{enumerate}
	\item $I_h(\mathcal{T}_h(t))$ has a limit as $t\to \infty$, i.e.,
	\beq \lim_{t\to \infty} I_h(\mathcal{T}_h(t)) = L.\eeq
	\item The mesh trajectory has limiting meshes, all of which are nonsingular and satisfy (\ref{amin}) and (\ref{kmin}).
	\item The limiting meshes are critical points of $I_h$, i.e., they satisfy
	\beq\label{critical} \dfrac{\partial I_h}{\partial \V x_i} = 0, \quad i = 1, \dots, N_v.\eeq
\end{enumerate}
\end{thm}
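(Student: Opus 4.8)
The plan is to prove the three assertions in sequence, relying on two facts already established for Theorem~\ref{nonsin}: the energy $I_h$ decreases along the trajectory, and the trajectory stays uniformly nonsingular through the bounds (\ref{amin})--(\ref{kmin}).

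For assertion~1, I would reuse the dissipation computation from the proof of Theorem~\ref{nonsin}, which shows $\frac{dI_h}{dt}\le 0$, so $t\mapsto I_h(\mathcal{T}_h(t))$ is nonincreasing. For the lower bound, the coercivity condition (\ref{coer}) gives, exactly as in that proof, $I_h(\mathcal{T}_h(t)) \ge -\beta\,\bar m^{d/2}|S|$ for all $t$, since the nonnegative trace term there may be discarded. A nonincreasing function that is bounded below converges, which gives the limit $L$.

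For assertion~2 the point is compactness. Every vertex $\V x_i(t)$ lies on the bounded, closed surface $S$, while (\ref{amin}) and (\ref{kmin}) bound every element altitude and area below by positive constants that depend only on the initial mesh, $N$, and the metric bounds, and never on $t$. Hence the configurations $(\V x_1(t),\dots,\V x_{N_v}(t))$ remain in a compact subset of $(\mathbb{R}^d)^{N_v}$ consisting of uniformly nonsingular meshes. By Bolzano--Weierstrass there are sequences $t_n\to\infty$ along which $\mathcal{T}_h(t_n)$ converges to a limiting mesh, and because the bounds (\ref{amin})--(\ref{kmin}) are closed conditions they pass to the limit, so every limiting mesh is nonsingular and satisfies the same estimates.

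Assertion~3 is the main obstacle, and I would handle it by a LaSalle-type invariance argument. Writing the mesh velocity in (\ref{MMPDEx}) as $-\frac{P_i}{\tau}\V w_i$ with $\V w_i$ the tangential projection of $(\partial I_h/\partial\V x_i)^T$, the dissipation identity from Theorem~\ref{nonsin} integrates to $\int_0^\infty \sum_i \frac{P_i}{\tau}\|\V w_i\|^2\,dt = I_h(\mathcal{T}_h(0)) - L < \infty$. Suppose a limiting mesh $\mathcal{T}_h^\ast$, attained along $t_n\to\infty$, failed to be an equilibrium of (\ref{MMPDEx}), so that $\sum_i \frac{P_i}{\tau}\|\V w_i\|^2 \ge 2\eta > 0$ there. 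Uniform nonsingularity guarantees, via the explicit formulas (\ref{vel2})--(\ref{vel1}), that the velocity field is smooth and uniformly bounded in a neighborhood of $\mathcal{T}_h^\ast$; continuity then yields a fixed radius and a fixed dwell time $\tau_0 > 0$ such that, once $\mathcal{T}_h(t_n)$ is close enough to $\mathcal{T}_h^\ast$, the trajectory remains in the region where the dissipation exceeds $\eta$ throughout $[t_n, t_n+\tau_0]$. Integrating gives $I_h(\mathcal{T}_h(t_n+\tau_0)) \le I_h(\mathcal{T}_h(t_n)) - \eta\,\tau_0$; letting $n\to\infty$ and using $I_h(\mathcal{T}_h(t))\to L$ on both sides produces $L \le L - \eta\tau_0$, a contradiction. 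Therefore the mesh velocities vanish at every limiting mesh, which is the critical-point condition (\ref{critical}). The delicate point is precisely this uniformity: one must invoke the uniform nonsingularity from Theorem~\ref{nonsin} so that the neighborhood radius and dwell time $\tau_0$ do not shrink to zero as $n\to\infty$, ensuring a fixed quantum of energy is dissipated on each approach to $\mathcal{T}_h^\ast$.
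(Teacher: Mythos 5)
Your proposal is correct in substance and follows the same route the paper intends: the paper's own proof of Theorem~\ref{limiting} is a brief citation of \cite[Theorem 4.3]{HK1}, naming exactly the ingredients you spell out --- monotonicity plus boundedness of $I_h(\mathcal{T}_h(t))$ for property 1, compactness of $\overline{S}$ (hence of the set of vertex configurations, combined with the $t$-uniform bounds (\ref{amin})--(\ref{kmin})) for property 2, and a dissipation argument for property 3. Your dwell-time contradiction is a legitimate, self-contained way to carry out the third step, and the uniformity issue you isolate --- that the neighborhood radius and dwell time must not shrink as $n\to\infty$, which is what the nonsingularity bounds and the boundedness of the velocities in (\ref{vel2})--(\ref{vel1}) guarantee --- is indeed the crux of that step.

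There is, however, one genuine discrepancy in your final sentence. What your argument proves is that every limiting mesh is an equilibrium of the projected flow (\ref{MMPDEx}), i.e., that for every $i$ the tangential part
\[
\left(\frac{\partial I_h}{\partial \V x_i}\right)^T-\left(\left(\frac{\partial I_h}{\partial \V x_i}\right)^T\cdot\V n_i\right)\V n_i
\]
vanishes. That is strictly weaker than (\ref{critical}), which asserts vanishing of the full gradient $\partial I_h/\partial \V x_i$. The flow (\ref{MMPDE-1}) exerts no control over the normal component of the gradient: a configuration whose gradient at every vertex is parallel to $\V n_i$ is stationary for (\ref{MMPDEx}) while $\partial I_h/\partial \V x_i \neq 0$, so no dissipation argument --- nor the paper's citation of the unconstrained bulk-mesh result, where no projection is present --- can deliver (\ref{critical}) as literally written. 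The honest conclusion of your argument (and of the paper's) is criticality of $I_h$ subject to the constraint that the vertices lie on $S$, i.e., vanishing of the projected gradient; your write-up should state that explicitly rather than equating ``the mesh velocities vanish'' with (\ref{critical}).
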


\begin{proof}
The proof is very much the same as that for \cite[Theorem 4.3]{HK1} for the bulk mesh case.
The key ideas to the proof are the monotonicity and boundedness of $I_h(\mathcal{T}_h(t))$ and the compactness of $\overline{S}$.  With these holding for the surface mesh case, one can readily prove the three properties.
\end{proof}

It is remarked that the above two theorems have been obtained for the MMPDE (\ref{MMPDEx}) which is semi-discrete
in the sense that it is discrete in space and continuous in time. A fully discrete scheme can be obtained by applying
a time-marching scheme to (\ref{MMPDEx}). Then similar results can be obtained for the fully discrete scheme
under similar assumptions and for a sufficiently small but not diminishing time step. Since the analysis is similar
to that for the bulk mesh case, the interested reader is referred to \cite{HK1} for the detailed discussion.


\section{Numerical examples}
\label{SEC:numerical}
     
Here we present numerical results for a selection of two- and three-dimensional examples to demonstrate the performance of the surface moving mesh method described in the previous sections. The main focus will be on showing how our method can be used for mesh smoothing and concentration. To assess the quality of the generated meshes, we compare the equidistribution ($Q_{eq}$) and alignment ($Q_{ali}$) mesh quality measures which are defined as
\begin{equation}
Q_{eq}=\max_{K\in\mathcal{T}_h} \dfrac{\det\left((F_K')^T\mathbb{M}_K F_K'\right)^{\frac{1}{2}}}{\sigma_h/N} \qquad \text{and}
\qquad Q_{ali}=\max_{K\in\mathcal{T}_h}\dfrac{\tr\left[\left((F_K')^T\mathbb{M}_K F_K'\right)^{-1}\right]}
{(d-1)\det\left((F_K')^T\mathbb{M}_K F_K'\right)^{-\frac{1}{d-1}}}~.
\end{equation}
These measures are indications of how closely the mesh 
satisfies the equidistribution condition (\ref{equ}) and the alignment condition (\ref{ali}), respectively.
The closer these quality measures are to 1, the closer they are to a uniform mesh with respect to
the metric $\mathbb{M}_K$.  It should be noted that the alignment condition does not apply to the two-dimensional case
where a ``surface'' is actually a curve.  Mathematically, when $d=2$, $\left(F_K'\right)^T\mathbb{M}_KF_K'$ is a
number and hence ($\ref{ali}$) is always satisfied.

For all computations we use $p = 3/2$ and $\theta = 1/3$ in the meshing function (\ref{Ih-1}).
This choice has been known to work well in bulk mesh applications. Interestingly, we have found that
it also works well for all surface mesh examples we have tested.
We take $\tau = 0.01$, $dt = 0.01$, and
\[
P_i = \det\left(\mathbb{M}(\V{x}_i)\right)^{\frac{p(d-1)-d}{2}}.
\]
The latter is to ensure that the MMPDE (\ref{MMPDEx}) be invariant under scaling transformations of $\mathbb{M}$. For all of the results, we run to a final time of 1.0. 

We choose two forms of $\mathbb{M}_K$. The first is $\mathbb{M}_K=I$, which will ensure the mesh move to become as uniform as possible with respect to the Euclidean norm. The second is a curvature-based metric tensor defined as a scalar matrix $\mathbb{M}_K = \left(k_K+\epsilon\right)I$, where $k_K$ is the mean curvature and $\epsilon$ is machine epsilon. The mean curvature is defined (e.g., see \cite{G}) for a curve $\Phi(x,y) = 0$ in $\mathbb{R}^2$ as
\[
k = \left| \dfrac{\Phi_{xx}\Phi_y^2 - 2\Phi_{xy}\Phi_x\Phi_y+\Phi_x^2\Phi_{yy}}{\left(\Phi_x^2+\Phi_y^2\right)^{\frac{3}{2}}}\right|
\]
and for a surface $\Phi(x,y,z) = 0$ in $\mathbb{R}^3$ as
\[
k = \left|\dfrac{ D_1 + D_2+ D_3- D_4}{2\left(\Phi_x^2+\Phi_y^2+\Phi_z^2\right)^{3/2}}\right| ,
\]
where
\begin{align*}
D_1 &= \Phi_x\left(\Phi_x\Phi_{xx}+\Phi_y\Phi_{xy}+\Phi_z\Phi_{xz}\right),\\
D_2 &= \Phi_y\left(\Phi_x\Phi_{xy}+\Phi_y\Phi_{yy}+\Phi_z\Phi_{yz}\right),\\
D_3 &= \Phi_z\left(\Phi_x\Phi_{xz}+\Phi_y\Phi_{yz}+\Phi_z\Phi_{zz}\right),\\
D_4 &= \left(\Phi_x^2+\Phi_y^2+\Phi_z^2\right)\left(\Phi_{xx} + \Phi_{yy} + \Phi_{zz}\right) .
\end{align*}
We would like to explore more metric tensors in future work but will focus on these two for this paper.

\vspace{10pt}

\begin{exam}
\label{circle}
For the first example, we generate adaptive meshes for the unit circle in two dimensions,
$$\Phi(x,y) = x^2 + y^2 - 1.$$
We take $N = 80$ and fix the node $\V x_{1} = (1,0)$.

Fig. \ref{figcircle} shows the meshes for this example.  Studying the figures we see that the initial mesh
Fig.~\ref{figcircle}(a) is very nonuniform but the final meshes Fig. \ref{figcircle}(b) and (c) have adapted to be equidistant along the curve.  Moreover, the final meshes for both $\mathbb{M}_K = I$ (Fig. \ref{figcircle} (b)) and $\mathbb{M}_K = (k_K+\epsilon)I$ (Fig. \ref{figcircle}(c)) adapt the mesh in the same manner. This is consistent with the fact that the curvature of a circle is constant thus the nodes do not concentrate in one particular region of the curve. The final meshes in both cases provide good size adaptation and are more uniformly distributed along the curve when compared with the initial mesh.  This can be further supported assessing the mesh quality measure for which $Q_{eq}$ improves from 7.509604 to 1.000004 for both cases of $\mathbb{M}_K$. The fact that $Q_{eq}\approx 1$ indicates that the mesh is close to satisfying the equidistribution condition (\ref{equ}) and hence the mesh is almost uniform with respect to the metric tensor $\mathbb{M}_K$.  It can also be seen that the nodes remain on the curve $\Phi$, which, as mentioned earlier,
is an inherent feature of the new surface moving mesh method and indeed an important one when adapting
a mesh on a curve.

\begin{figure}[h]
\begin{center}
\hbox{
\begin{minipage}[t]{2in}
\begin{center}
\includegraphics[width=2in]{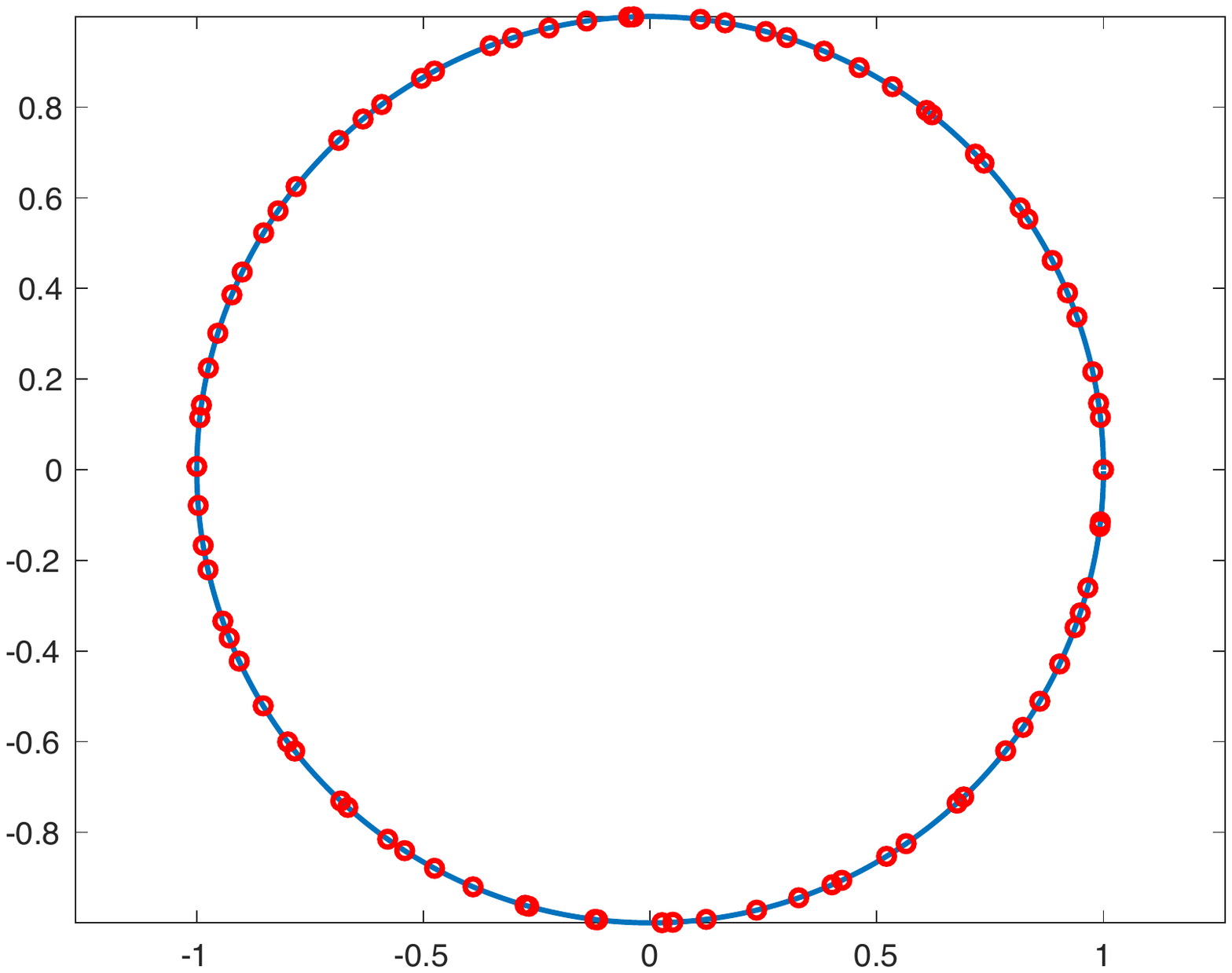}
\end{center}
\vspace{0.42cm}
\centerline{(a) Initial Mesh}\
\end{minipage}
\hspace{2mm}
\begin{minipage}[t]{2in}
\begin{center}
\includegraphics[width=2in]{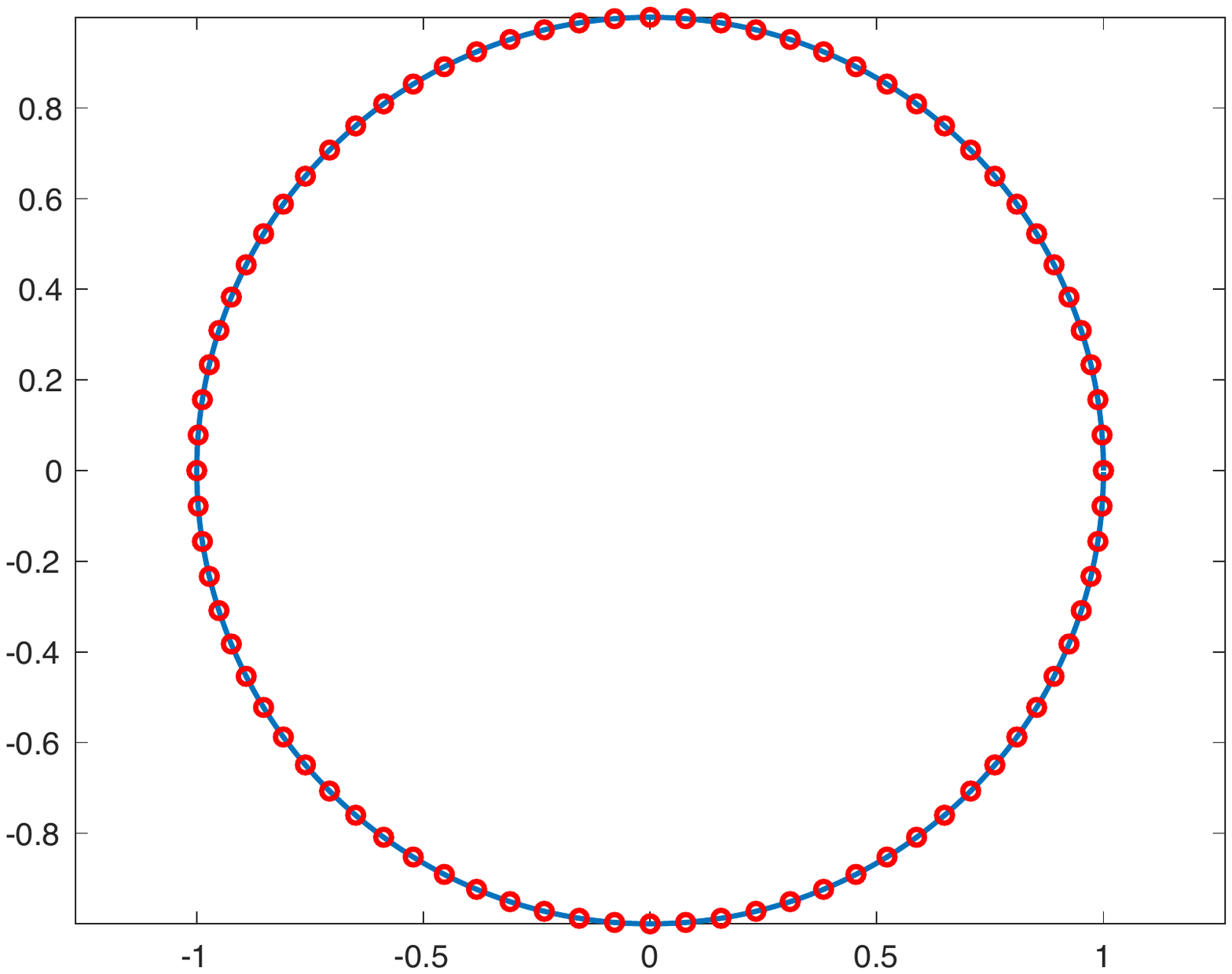}
\end{center}
\centerline{(b) Final Mesh, $\mathbb{M}_K = I$}\
\end{minipage}
\hspace{2mm}
\begin{minipage}[t]{2in}
\begin{center}
\includegraphics[width=2in]{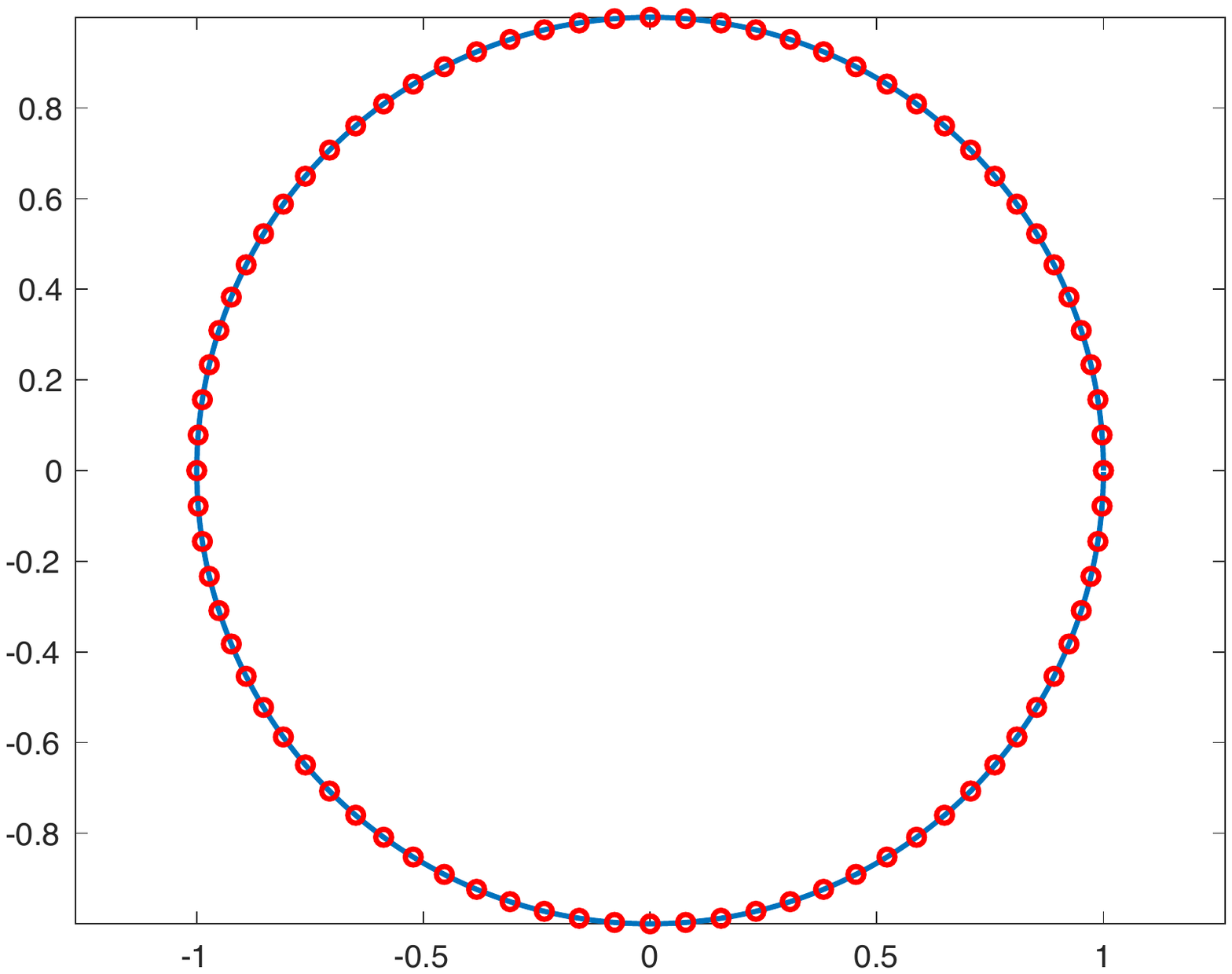}
\end{center}
\centerline{(c) Final Mesh, $\mathbb{M}_K = (k_K + \epsilon)I$}\
\end{minipage}
}
\caption{Example~\ref{circle}. Meshes of $N=80$ are obtained for $\Phi(x,y) = x^2+y^2-1$.
}
\label{figcircle}
\end{center}
\end{figure}

\end{exam}

\vspace{10pt}

\begin{exam}
\label{ellipse}

The second two-dimensional example is the ellipse defined by
$$\Phi(x,y) = \dfrac{x^2}{64}+y^2-1.$$
In this example we take $N= 60$ and fix the node $\V x_1 = (8,0)$.

The initial nodes (Fig. \ref{figellipse}(a)) are randomly distributed through the curve. However, for $\mathbb{M}_K = I$, the final mesh (Fig. \ref{figellipse}(b)) is equidistant along the ellipse providing a much more uniform mesh. This can also be seen in $Q_{eq}$ which improves from 5.497002 initially to 1.026912 in the final mesh. 

Now considering the curvature-based metric tensor (Fig. \ref{figellipse}(c)), we can see a high concentration of elements near the regions of the ellipse with large curvature. This is consistent with the equidistribution principle which
requires higher concentration in the regions with larger determinant of the metric tensor (larger mean curvature in the
current situation). The mean curvature is large in the regions of the ellipse close to $x = -8 , 8$ and almost 0 for $x \in (-2,2)$. From Fig. \ref{figellipse}(c) we can see that the adaptation with $\mathbb{M}_K = (k_K + \epsilon) I$ provides a mesh that represents the shape of the curve much better than other two meshes. The improvement of $Q_{eq}$ from 5.126216 to 1.015848 indicates that the final mesh is almost uniform with respect to the curvature-based metric tensor.

\begin{figure}[h]
\begin{center}
\hbox{
\begin{minipage}[t]{2in}
\begin{center}
\includegraphics[width=2in]{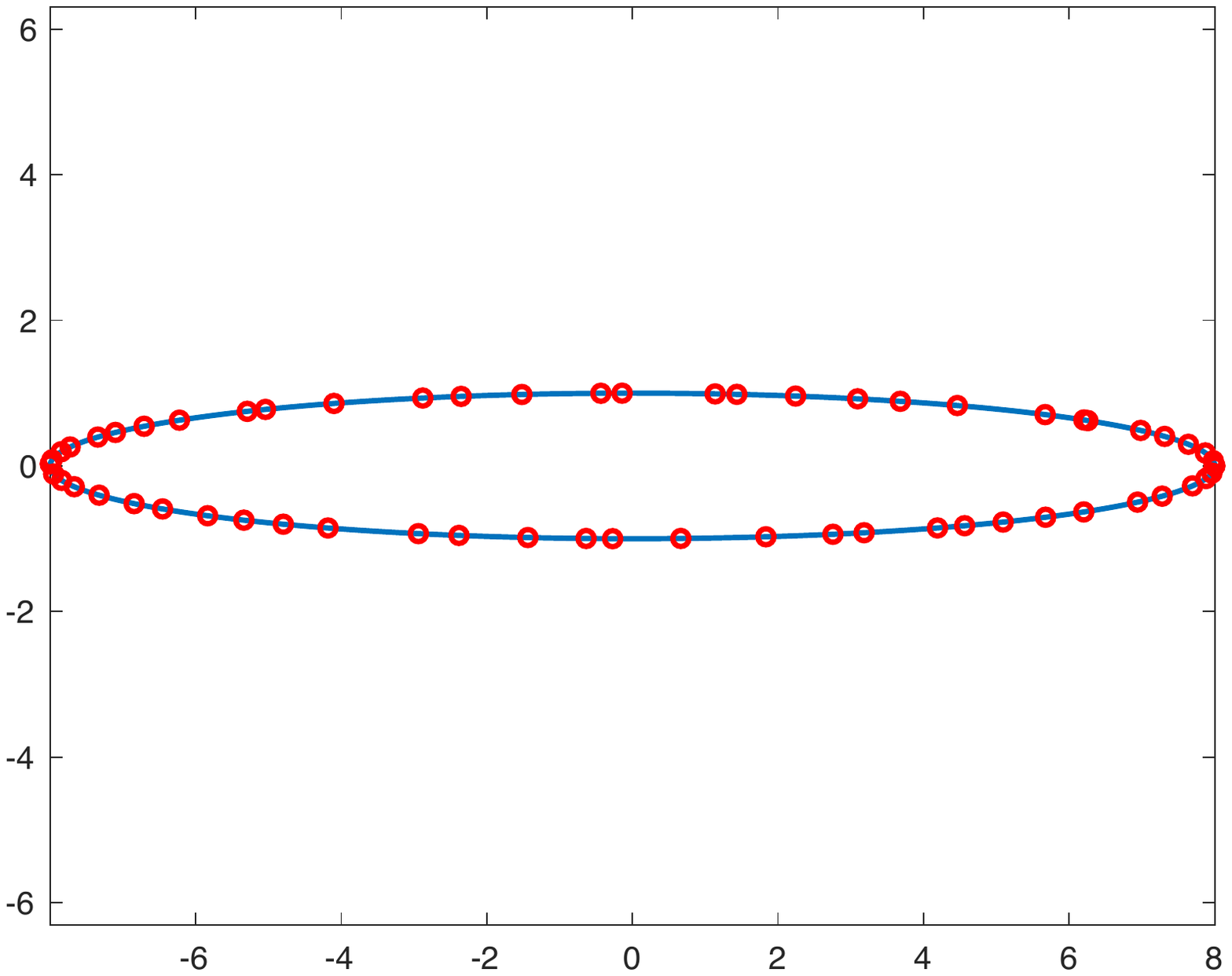}
\end{center}
\vspace{0.42cm}
\centerline{(a) Initial Mesh}\
\end{minipage}
\hspace{2mm}
\begin{minipage}[t]{2in}
\begin{center}
\includegraphics[width=2in]{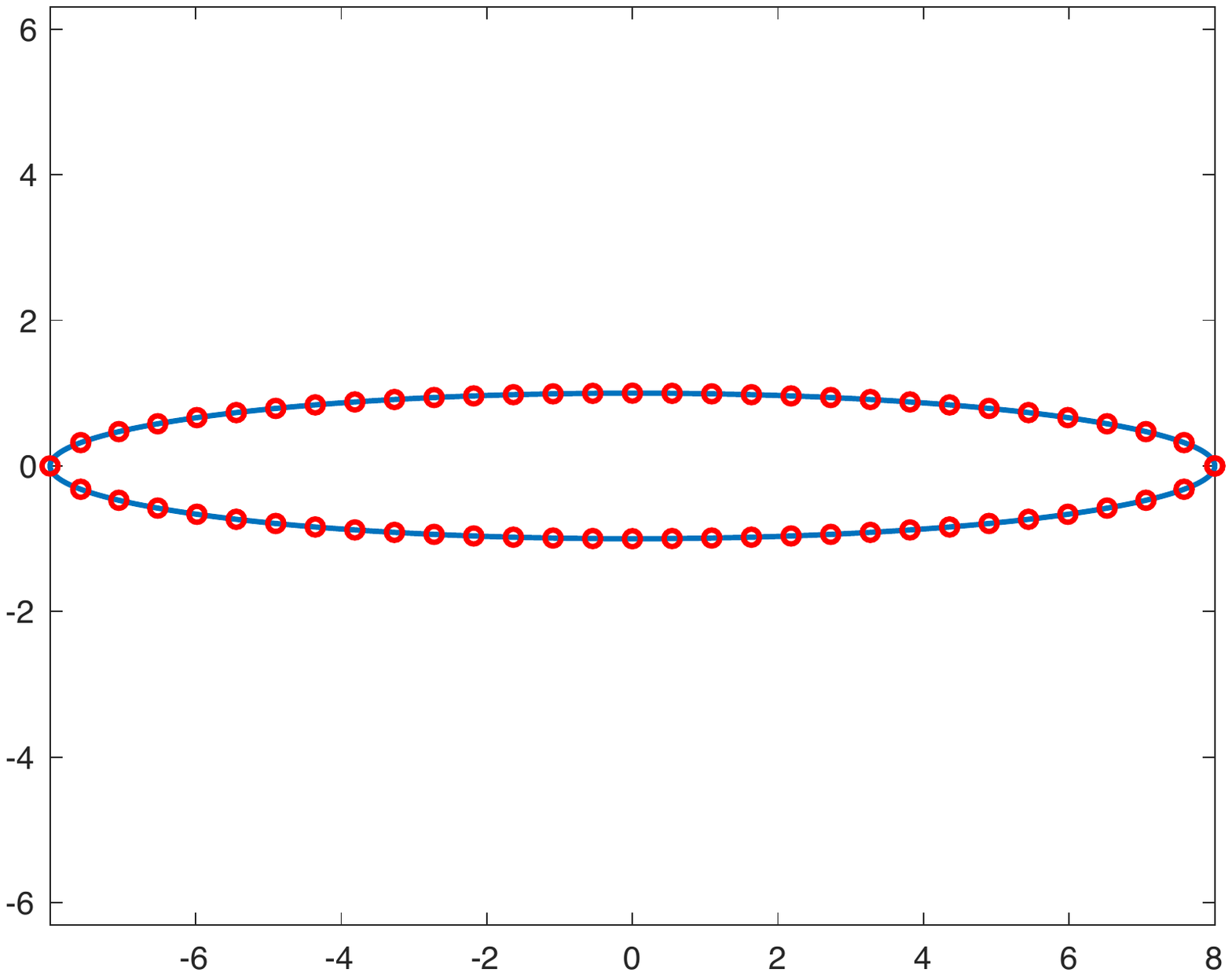}
\end{center}
\centerline{(b) Final Mesh, $\mathbb{M}_K = I$}\
\end{minipage}
\hspace{2mm}
\begin{minipage}[t]{2in}
\begin{center}
\includegraphics[width=2in]{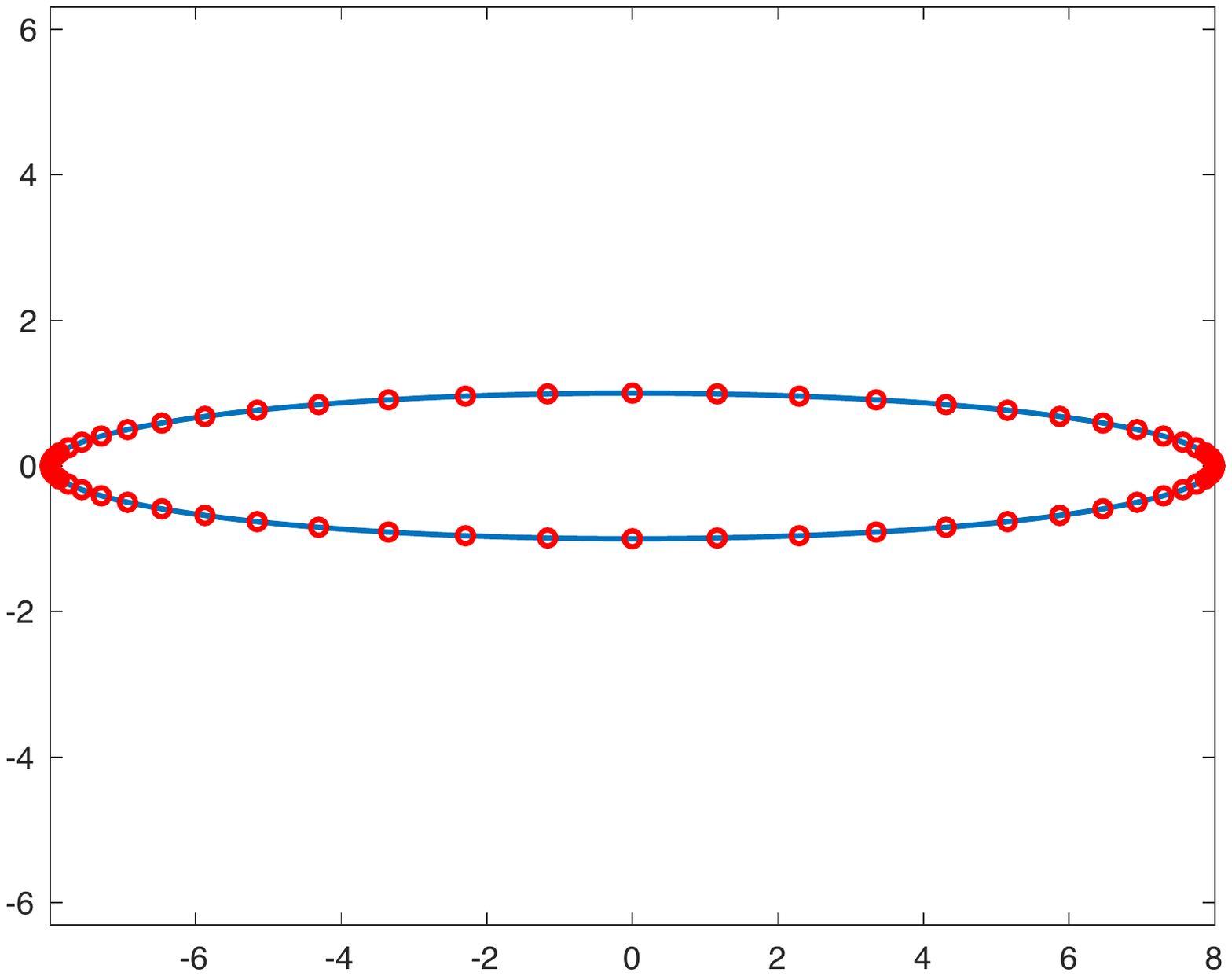}
\end{center}
\centerline{(c) Final Mesh, $\mathbb{M}_K = (k_K + \epsilon)I$}\
\end{minipage}
}
\caption{Example~\ref{ellipse}.  Meshes of $N = 60$ are obtained for $\Phi(x,y) = \dfrac{x^2}{64}+y^2-1$.
}
\label{figellipse}
\end{center}
\end{figure}
\end{exam}

\vspace{10pt}

\begin{exam}
\label{sin2d}

For the next two-dimensional example, we generate adaptive meshes for the sine curve defined by
$$\Phi(x,y) = 4\sin(x) - y.$$
In this example we take $N= 60$ and fix the end nodes $\V x_1 = (0,0)$ and $\V x_{61} = (2\pi, 0)$.

Fig. \ref{figsin2d} shows the meshes for this example.  From Fig. \ref{figsin2d}(a) and (b) we see that for $\mathbb{M}_K = I$, the mesh becomes much more uniform.  This is consistent with the fact that for $\mathbb{M} = I$, the minimization of the meshing function will make the mesh more uniform with respect to the Euclidean norm.  The observation can be further supported by assessing the mesh quality measures for which $Q_{eq}$ measure improves from 4.183312 to 1.002906 indicating that the final mesh satisfies the equidistribution condition (\ref{equ}) closely.

Now studying Fig. \ref{figsin2d}(c) where $\mathbb{M}_K = (k_K + \epsilon) I$ is used, we see that there is a high concentration of mesh elements in regions with large curvature, i.e., the hill at $y = 4$ and cup at $y = -4$, which is consistent with the use of the curvature-based metric tensor. Moreover, the equidistribution measure $Q_{eq}$ improves from 6.254755 to 1.007493. This indicates that although the mesh may seem nonuniform in the Euclidean metric, it is almost uniform in the metric $\mathbb{M}_K$.

\begin{figure}[h]
\begin{center}
\hbox{
\begin{minipage}[t]{2in}
\begin{center}
\includegraphics[width=2in]{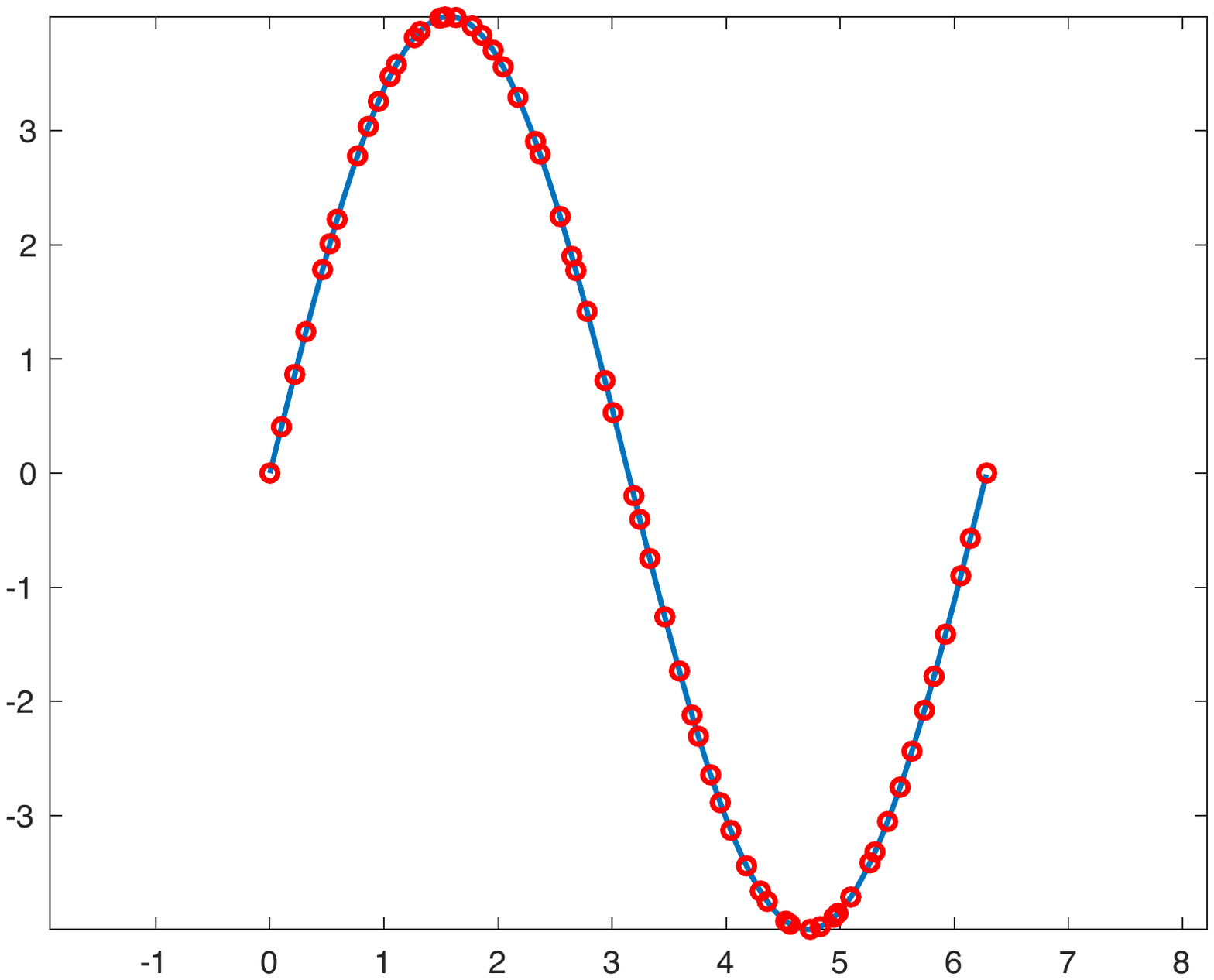}
\end{center}
\vspace{0.42cm}
\centerline{(a) Initial Mesh}\
\end{minipage}
\hspace{2mm}
\begin{minipage}[t]{2in}
\begin{center}
\includegraphics[width=2in]{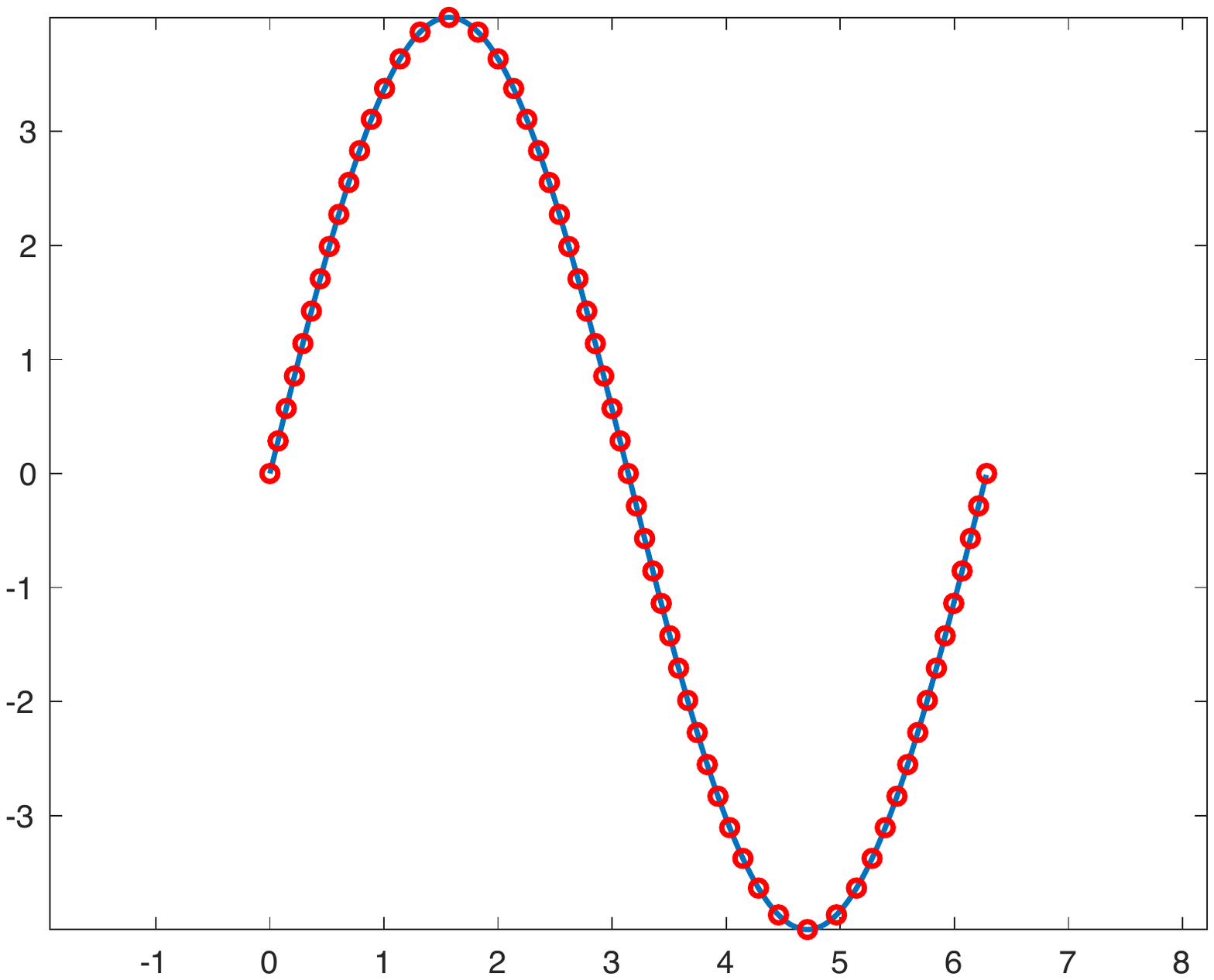}
\end{center}
\centerline{(b) Final Mesh, $\mathbb{M}_K = I$}\
\end{minipage}
\hspace{2mm}
\begin{minipage}[t]{2in}
\begin{center}
\includegraphics[width=2in]{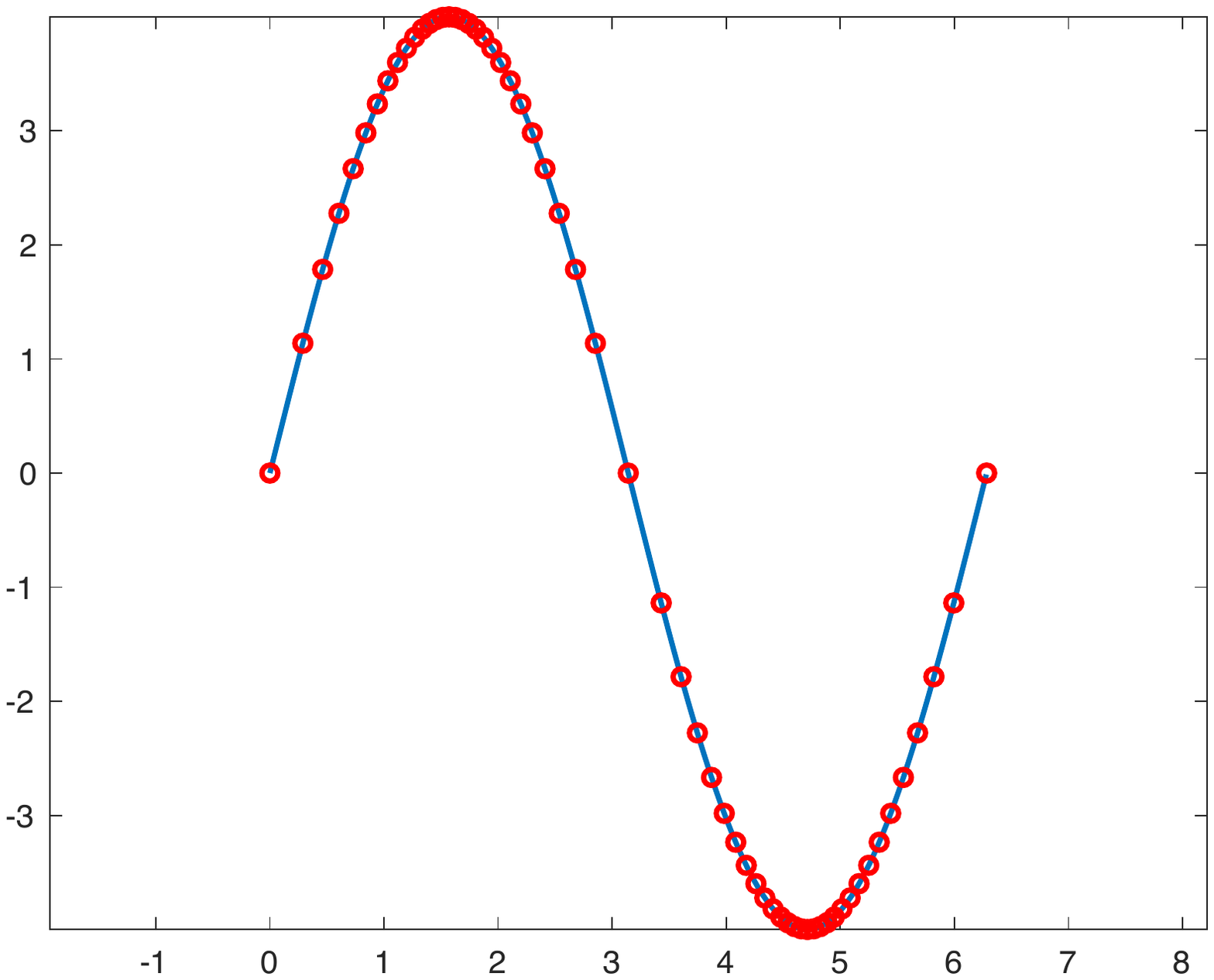}
\end{center}
\centerline{(c) Final Mesh, $\mathbb{M}_K = (k_K+\epsilon)I$}\
\end{minipage}
}
\caption{Example~\ref{sin2d}. Meshes of $N = 60$ are obtained for $\Phi(x,y) = 4\sin(x) - y$.
}
\label{figsin2d}
\end{center}
\end{figure}

As discussed in Section~\ref{SEC:theor}, theoretically we know that the value of $I_h$ is decreasing and $|K|$ is bounded below.  To see these numerically, we plot $I_h$ and $|K|_{\min}$ as functions of $t$ in Fig. \ref{figsin2d2}, where $|K|_{\min}$ denotes the minimum area of $K$ over all elements in $\mathcal{T}_h$. The numerical results are shown to be consistent with the theoretical predictions.  Specifically, for $\mathbb{M}_K = I$, Fig. \ref{figsin2d2}(a) shows
that $I_h$ is decreasing and bounded below by 9.535. Additionally, Fig. \ref{figsin2d2}(b) suggests that $|K|_{\min}$ is bounded below by 0.235 which is the value of $|K|_{\min}$ of the initial mesh. As we see, $|K|_{\min}$ first increases and then converges to about 0$.285 \approx \frac{|S|}{N}$. The reason is because in the final mesh, the elements are close to being uniform with respect to the Euclidean metric and thus $|K| \approx \frac{|S|}{N}$ for all $K$. Since the initial mesh is nonuniform, we expect an increase in $|K|_{\min}$ as the mesh is becoming more uniform.  Moreover, as the mesh reaches the limiting mesh trajectory around $t = 0.05$, we see that $|K|_{\min}$ converges as shown in
Fig. \ref{figsin2d2}(b).

For the case with $\mathbb{M}_K = (k_K + \epsilon) I$, the numerical results are again consistent with the theoretical predictions.  Fig. \ref{figsin2d2}(c) shows that $I_h$ is decreasing for all time and bounded below by 15.5. This figure also shows that at around $t = 0.15$, $I_h$ begins to converge. In Fig. \ref{figsin2d2} (d), $|K|_{\min}$ has similar properties to Fig. \ref{figsin2d2}(b). That is, we see an initial increase in $|K|_{\min}$ after which, the value converges to 0.11 starting at around $t = 0.15$. Furthermore, Fig. \ref{figsin2d2}(d) suggests that $|K|_{\min}$ is bounded below by the initial value of 0.045.

\begin{figure}[h]
\begin{center}
\hbox{
\begin{minipage}[t]{3in}
\begin{center}
\includegraphics[width=2.75in]{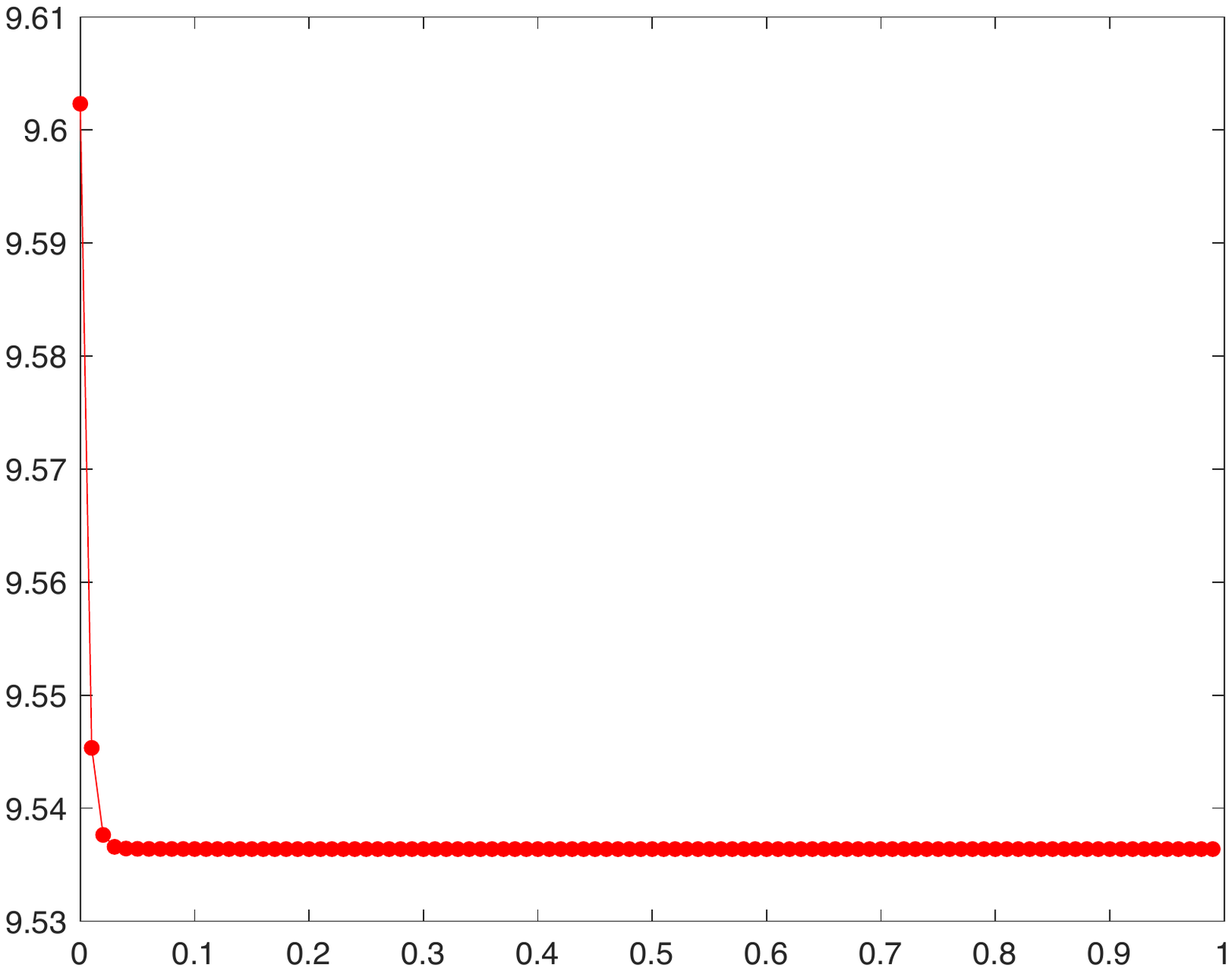}
\end{center}
\vspace{0.42cm}
\centerline{(a) $I_h$, $\mathbb{M}_K = I$}\
\end{minipage}
\hspace{2mm}
\begin{minipage}[t]{3in}
\begin{center}
\includegraphics[width=2.75in]{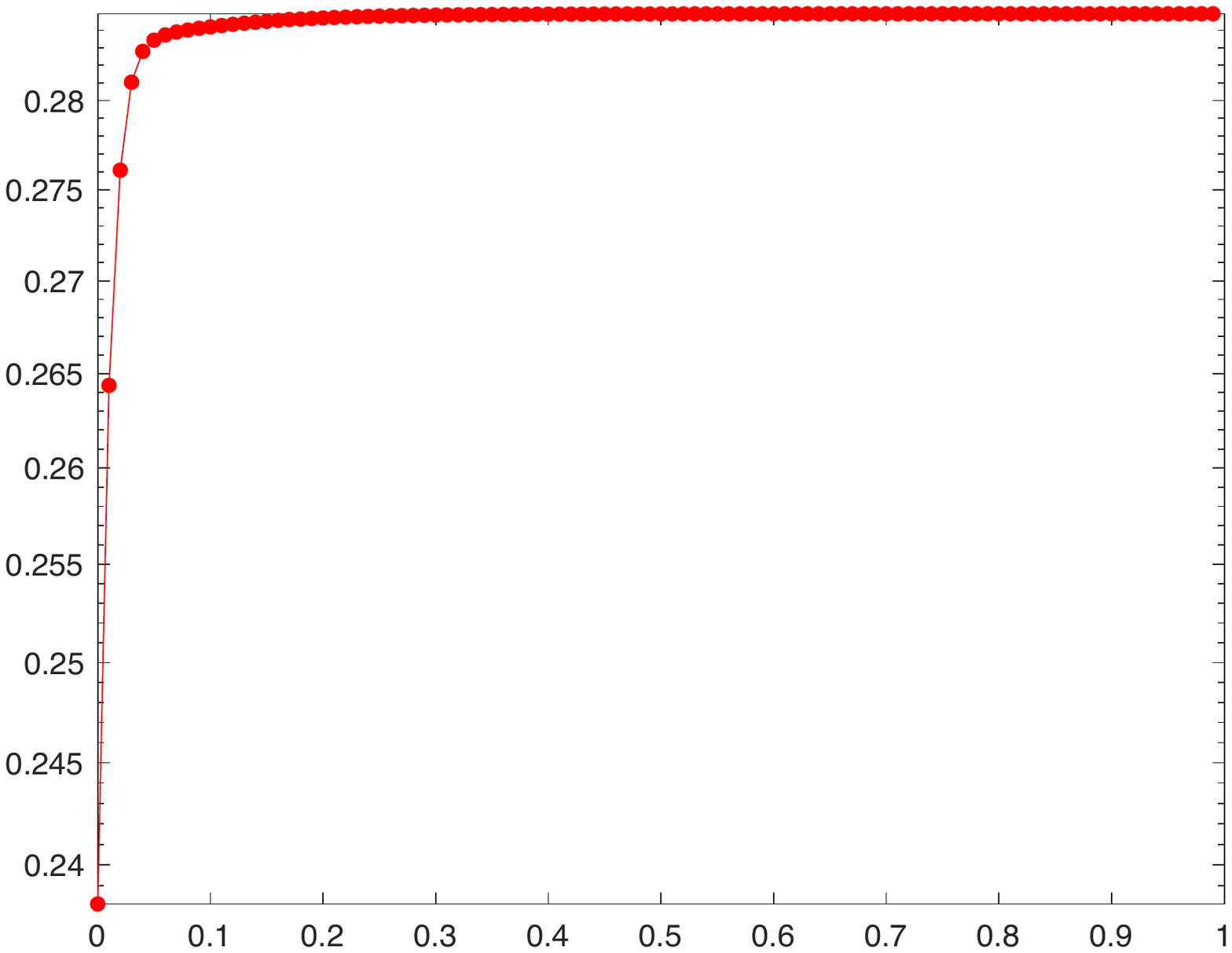}
\end{center}
\centerline{(b) $|K|_{\min}$, $\mathbb{M}_K = I$}\
\end{minipage}
}
\hbox{
\begin{minipage}[t]{3in}
\begin{center}
\includegraphics[width=2.75in]{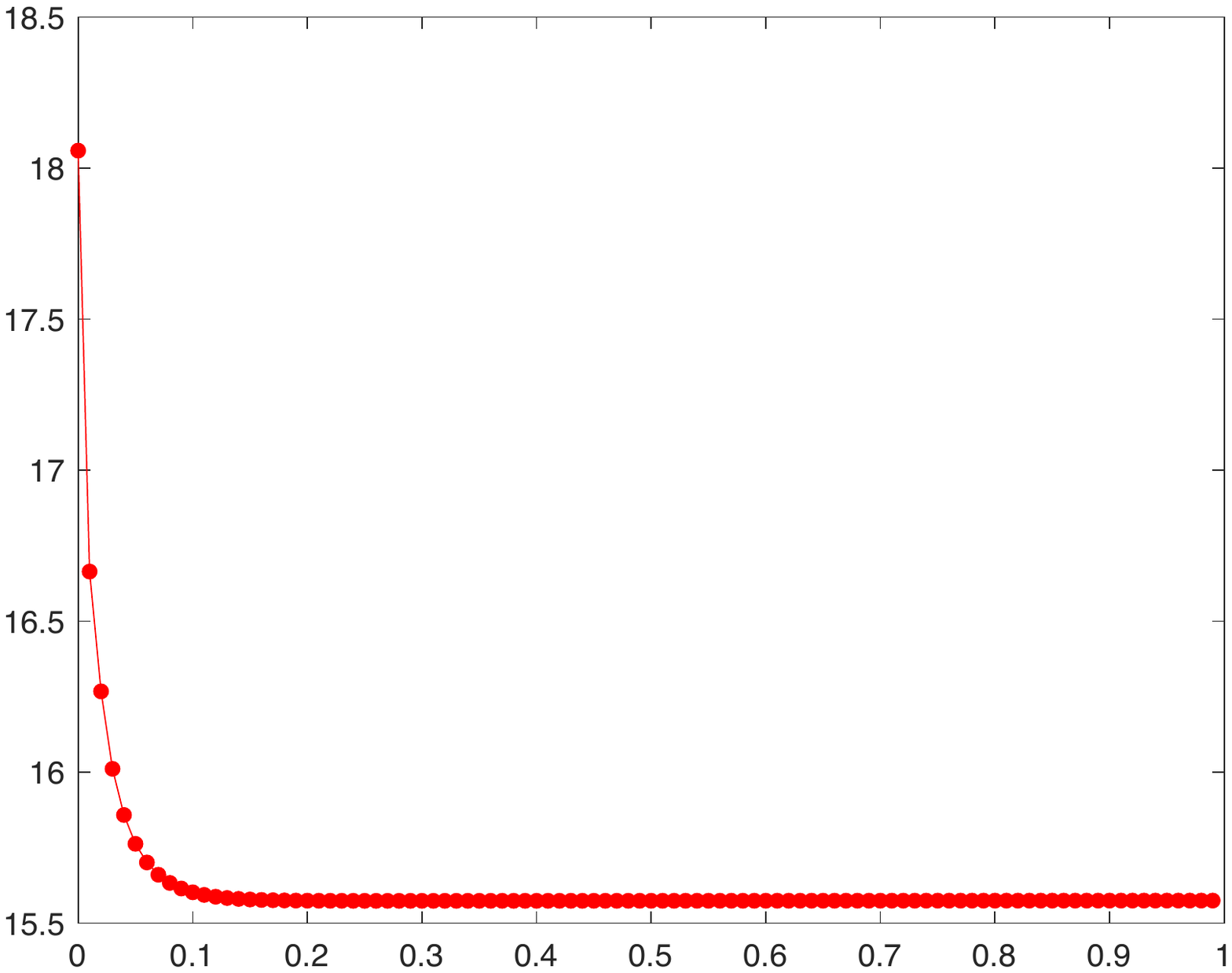}
\end{center}
\vspace{0.42cm}
\centerline{(c) $I_h$, $\mathbb{M}_K = (k_K + \epsilon) I$}\
\end{minipage}
\hspace{2mm}
\begin{minipage}[t]{3in}
\begin{center}
\includegraphics[width=2.75in]{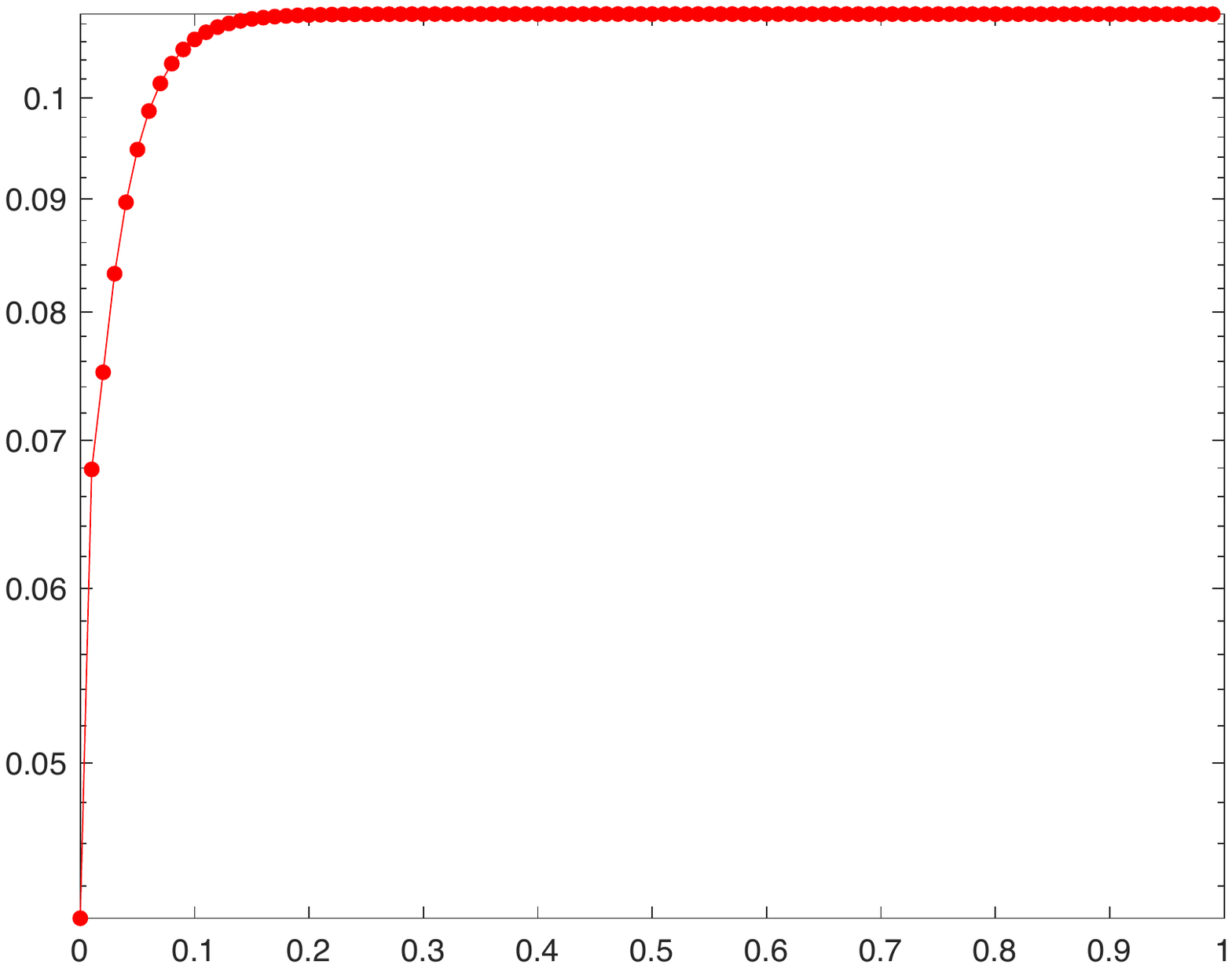}
\end{center}
\centerline{(d) $|K|_{\min}$, $\mathbb{M}_K = (k_K + \epsilon) I$}\
\end{minipage}
}
\caption{Example~\ref{sin2d}. $I_h$ and $K_{\min}$ plotted as functions of $t$ for $\Phi(x,y) = 4\sin(x) - y$.
}
\label{figsin2d2}
\end{center}
\end{figure}

\end{exam}

\vspace{10pt}

\begin{exam}
\label{lemni}

As the final two-dimensional example, we generate adaptive meshes for the lemniscate defined by
\[
\Phi(x,y) = (x^2 + y^2)^2 - 4(x^2 - y^2).
\]
In this example adapt the mesh on the curve for both $N= 60$ and $N= 120$. In both situations, we fix the node $\V x_1 = (2,0)$.

From Fig. \ref{lemnifig} we see that for $N = 60$ the mesh adapts from a very nonuniform initial mesh (Fig. \ref{lemnifig}(a)) to a uniform final mesh (Fig. \ref{lemnifig}(b)) when considering the metric tensor corresponding to the Euclidean metric. The nodes are equidistant apart while remaining on the curve. This improvement in uniformity can be further supported by the equidistribution quality measure which improves from 2.083287 for the initial mesh to 1.002549 for the final mesh.    

We see a similar result when the curvature-based metric tensor is used (Fig. \ref{lemnifig}(c)). A higher concentration of nodes occurs in the circular regions with larger curvature compared to the cross section which has smaller curvature (i.e., the linear regions).  It is not a significant difference in concentration but this is consistent with the fact that the curvature of the lemniscate is close to but not exactly constant.  The equidistribution quality measure improves from 3.364232 to 1.001011 indicating that the final mesh is much more uniform with respect to the curvature-based $\mathbb{M}_K$
than the initial mesh.

\begin{figure}[h]
\begin{center}
\hbox{
\begin{minipage}[t]{2in}
\begin{center}
\includegraphics[width=2in]{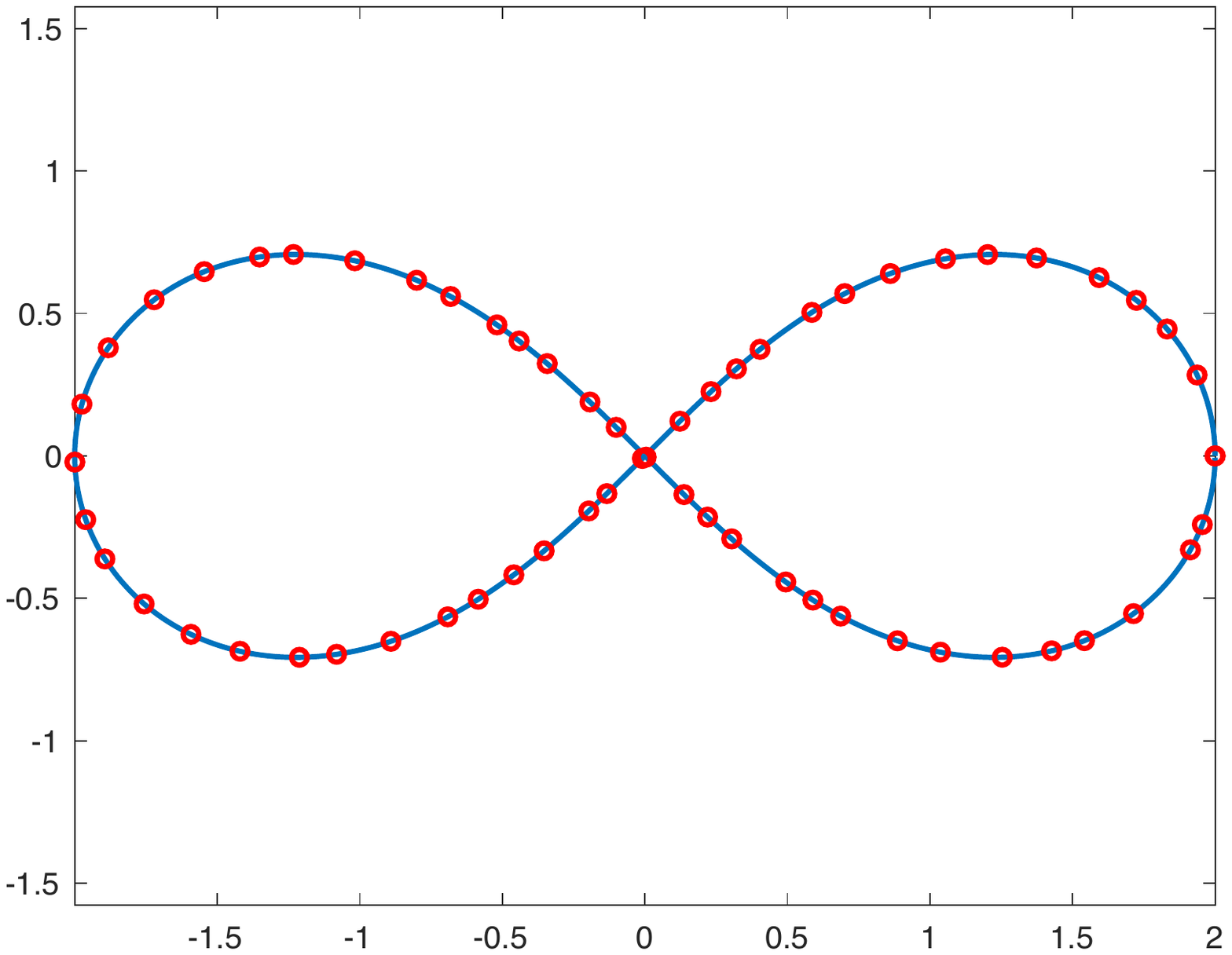}
\end{center}
\vspace{0.42cm}
\centerline{(a) Initial Mesh}\
\end{minipage}
\hspace{2mm}
\begin{minipage}[t]{2in}
\begin{center}
\includegraphics[width=2in]{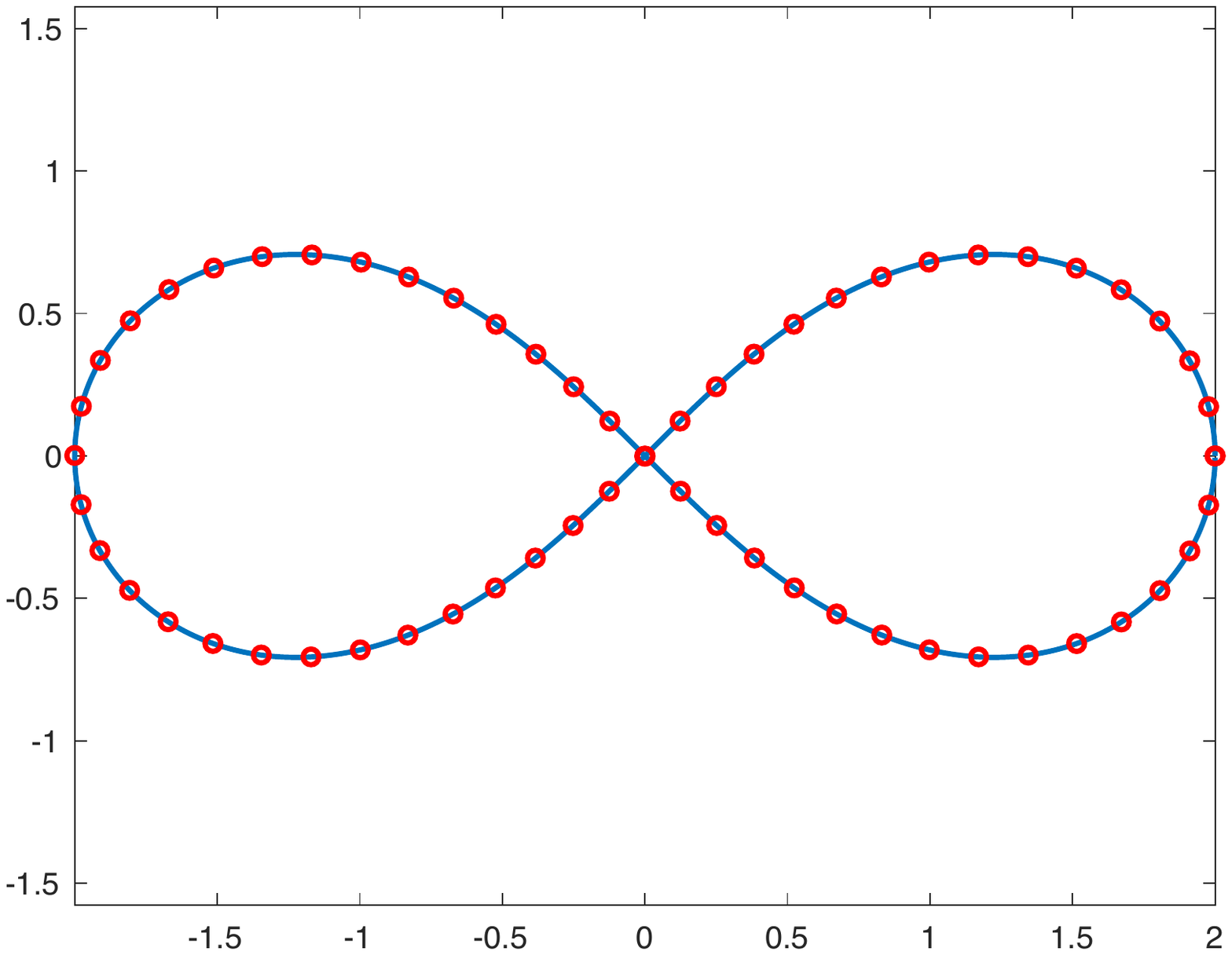}
\end{center}
\centerline{(b) Final Mesh, $\mathbb{M}_K = I$}\
\end{minipage}
\hspace{2mm}
\begin{minipage}[t]{2in}
\begin{center}
\includegraphics[width=2in]{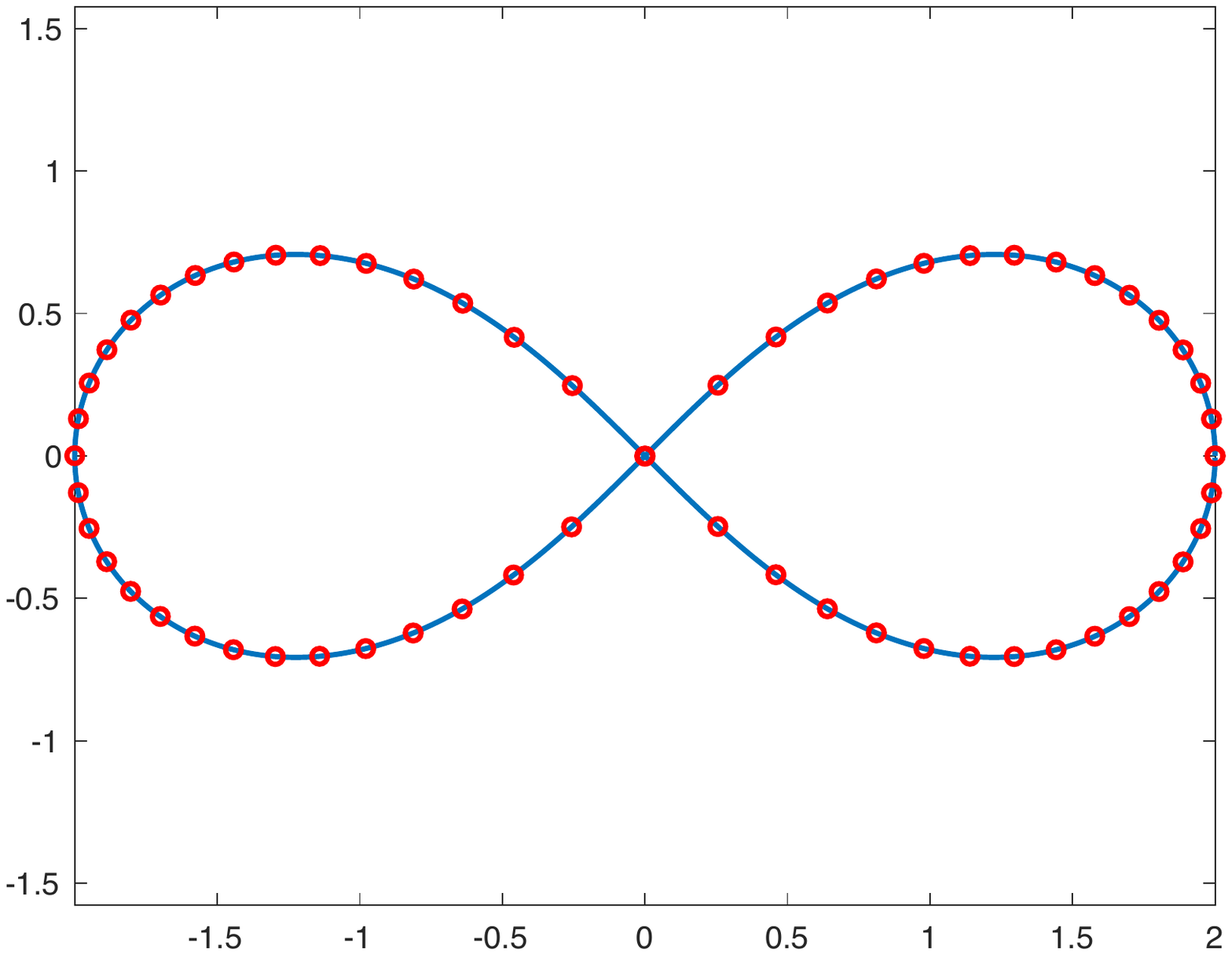}
\end{center}
\centerline{(c) Final Mesh, $\mathbb{M}_K = (k_K+\epsilon)I$}\
\end{minipage}
}
\caption{Example~\ref{lemni}. Meshes of $N = 60$ are obtained for the lemniscate $\Phi(x,y) = (x^2 + y^2)^2 - 4(x^2 - y^2)$.}
\label{lemnifig}
\end{center}
\end{figure}

\begin{figure}[h]
\begin{center}
\hbox{
\begin{minipage}[t]{2in}
\begin{center}
\includegraphics[width=2in]{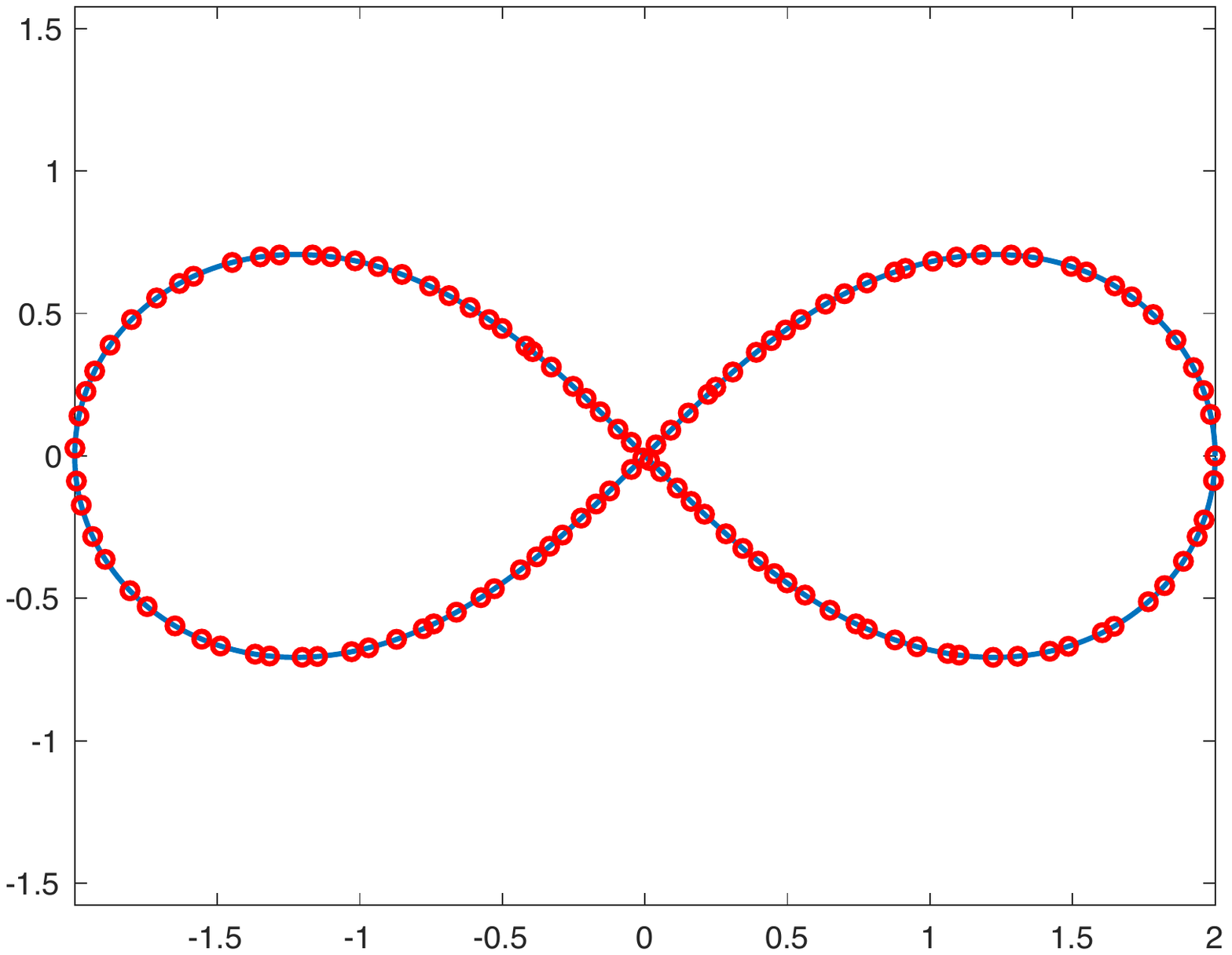}
\end{center}
\vspace{0.42cm}
\centerline{(a) Initial Mesh}\
\end{minipage}
\hspace{2mm}
\begin{minipage}[t]{2in}
\begin{center}
\includegraphics[width=2in]{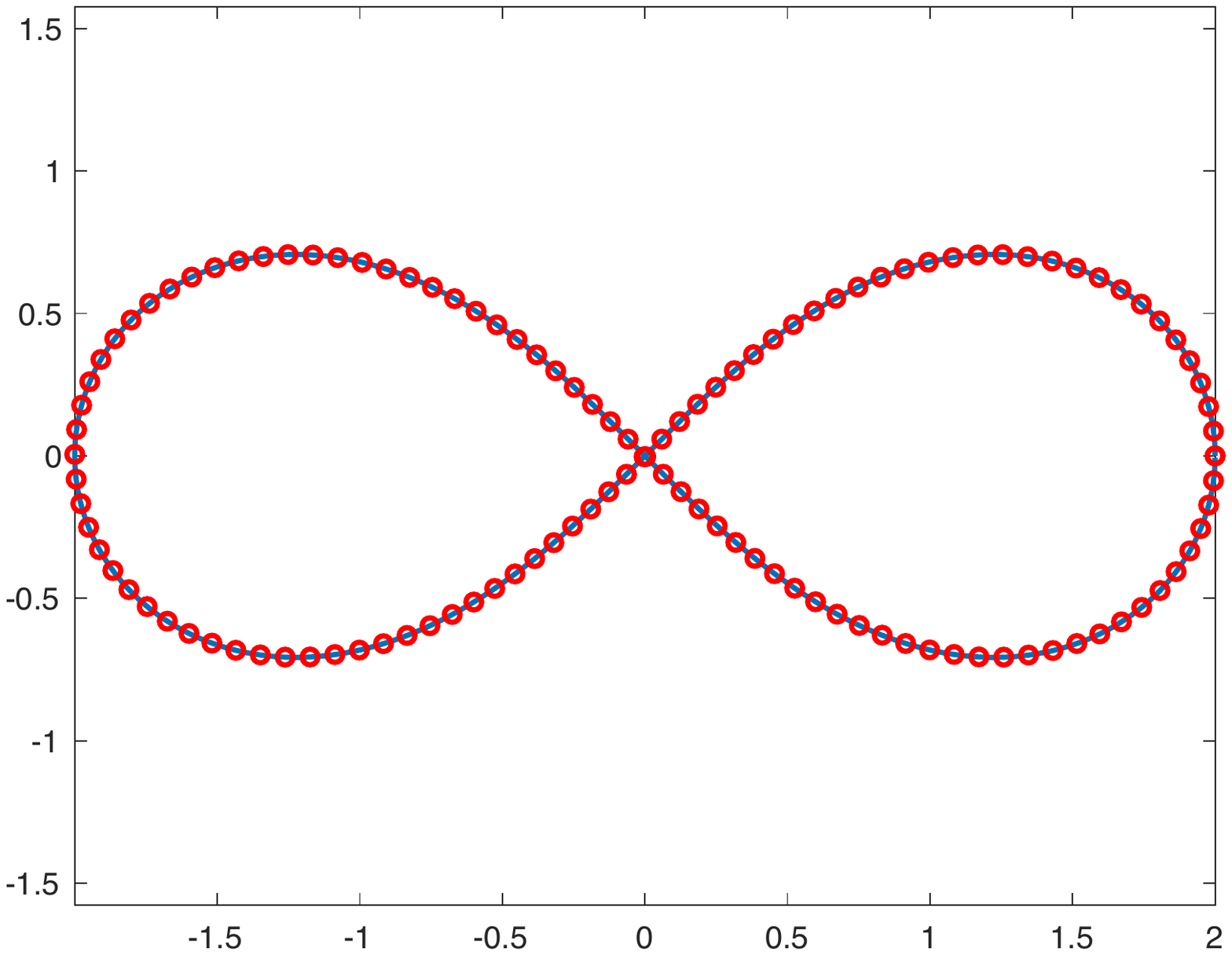}
\end{center}
\centerline{(b) Final Mesh, $\mathbb{M}_K = I$}\
\end{minipage}
\hspace{2mm}
\begin{minipage}[t]{2in}
\begin{center}
\includegraphics[width=2in]{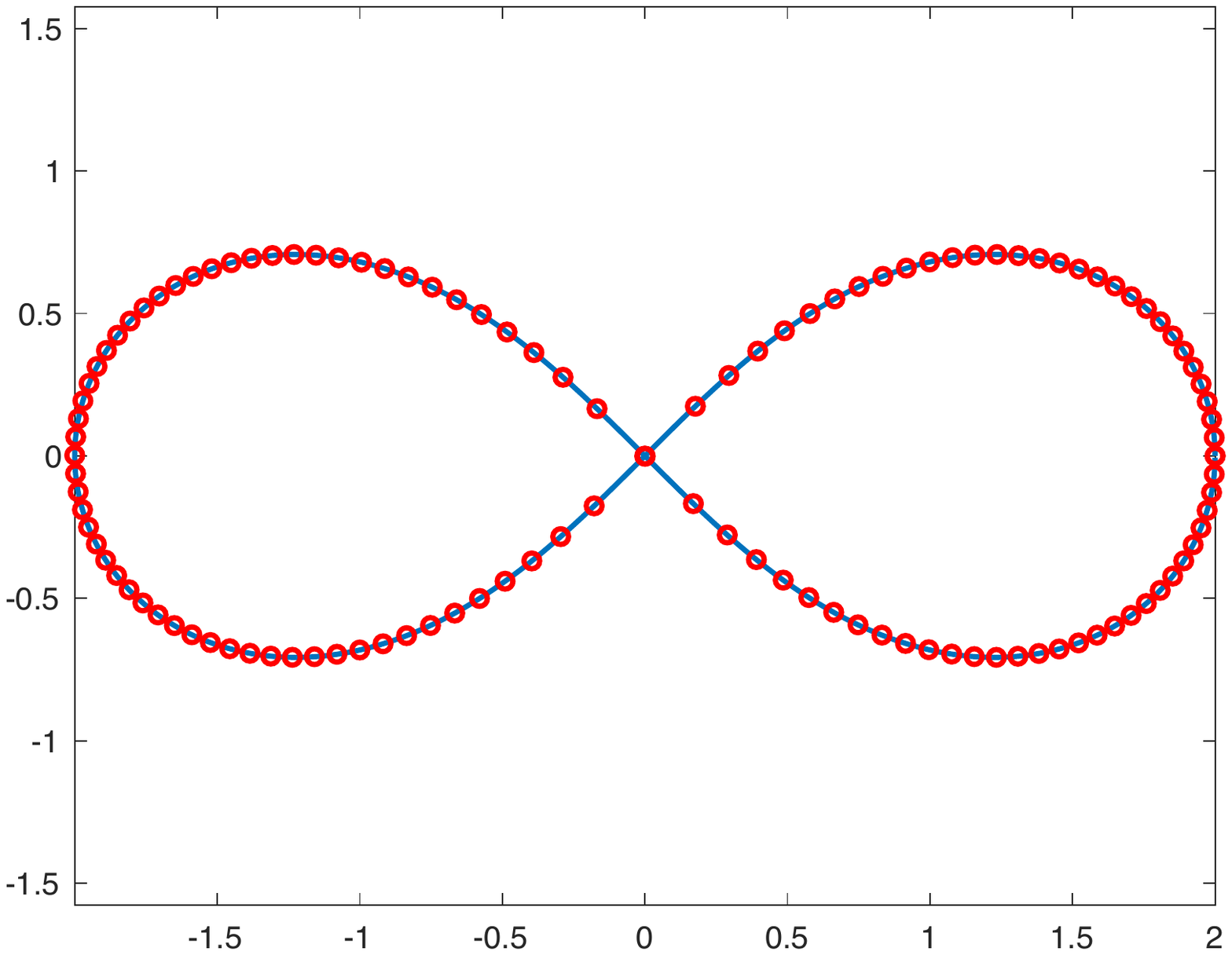}
\end{center}
\centerline{(c) Final Mesh, $\mathbb{M}_K = (k_K+\epsilon)I$}\
\end{minipage}
}
\caption{Example~\ref{lemni}. Meshes of $N = 120$ are obtained for the lemniscate $\Phi(x,y) = (x^2 + y^2)^2 - 4(x^2 - y^2)$.}
\label{lem2}
\end{center}
\end{figure}

For $N = 120$, Fig. \ref{lem2} shows similar findings.  When considering the Euclidean metric, we see the mesh, Fig. \ref{lem2}(a), is very nonuniform initially and adapts to a equidistant spacing of the nodes along the curve. This is further supported in the quality measures for which the equidistribution measure improves from 2.552134 to 1.002167 indicating that the final mesh is close to satisfying the equidistribution condition.

The curvature-based metric tensor results in a similar adaptation as before.  Fig. \ref{lem2}(c) shows the final mesh has adapted in such a way where there is a higher concentration of nodes in those regions of the curve with larger curvature, i.e., circular regions.  Comparatively, there are fewer nodes in the cross section which has smaller curvature.  The difference in concentration can be clearly seen in Fig. \ref{lem2}(c) with $N = 120$ nodes.  The adaptation is consistent with the curvature of the lemniscate, which is close to but not exactly constant. This improvement in uniformity can be further supported by the equidistribution quality measure which improves from 8.023253 for the initial mesh to 1.001855 for the final mesh.

\end{exam}

\vspace{10pt}

\begin{exam}
\label{torus}
Let us now consider surfaces in $\mathbb{R}^3$. In this first example, we consider adaptive meshes for the torus defined by
$$\Phi(x,y,z) = \left(2-\sqrt{x^2+y^2}\right)^2+z^2-1,$$
where $x,y \in [-3,3]$, and $z\in [-1,1]$. We take $N= 3200$.

Fig. \ref{figtorus} shows the meshes for this example in two different views.  Studying Fig. \ref{figtorus}(a), the initial mesh, and Fig. \ref{figtorus}(b), the final mesh with $\mathbb{M}_K = I$, we can see that the final mesh provides a more uniform distribution of the nodes. That is consistent with the use of the metric tensor $\mathbb{M} = I$ whose goal is to make the mesh as uniform as possible in the Euclidean norm. This can also be confirmed from the equidistribution and alignment quality measures. The equidistribution measure for the initial mesh is 15.50150 and for the final mesh 1.332488. Similarly, the initial alignment quality measure is 30.63276 compared to that of the final mesh which is 1.920701.

For the curvature-based metric tensor, we see similar results to that of the Euclidean metric. That is, the final mesh for the curvature-based metric tensor, Fig. \ref{figtorus}(c), looks identical to the final mesh for the Euclidean metric, Fig. \ref{figtorus}(b). This is because the absolute value of the mean curvature of a torus is close to constant and hence, the elements do not concentrate in any particular region of the surface. 

\begin{figure}[h]
\begin{center}
\hbox{
\begin{minipage}[t]{2in}
\begin{center}
\includegraphics[width=2in]{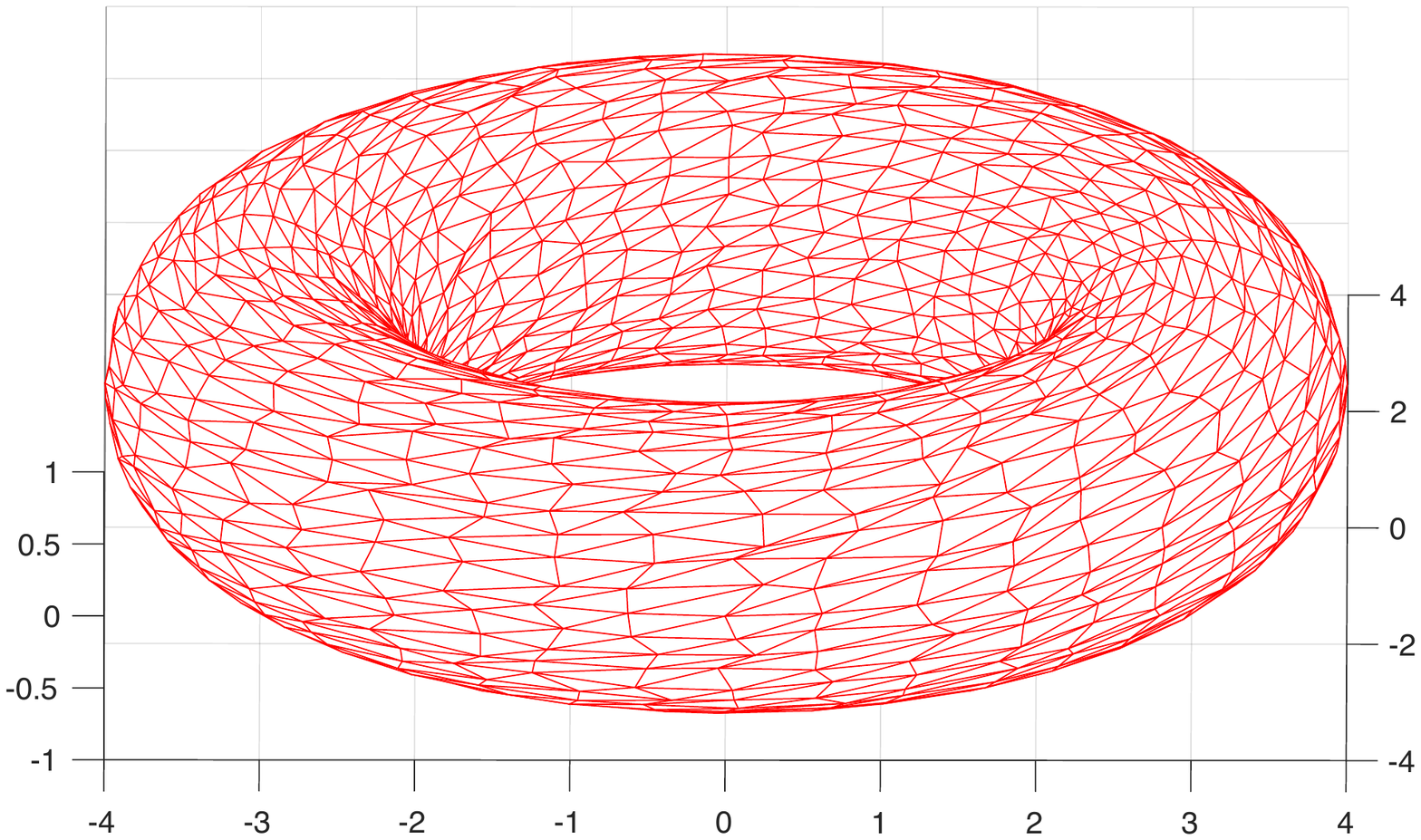}
\end{center}
\vspace{0.42cm}
\centerline{(a) Initial Mesh}\
\end{minipage}
\hspace{2mm}
\begin{minipage}[t]{2in}
\begin{center}
\includegraphics[width=2in]{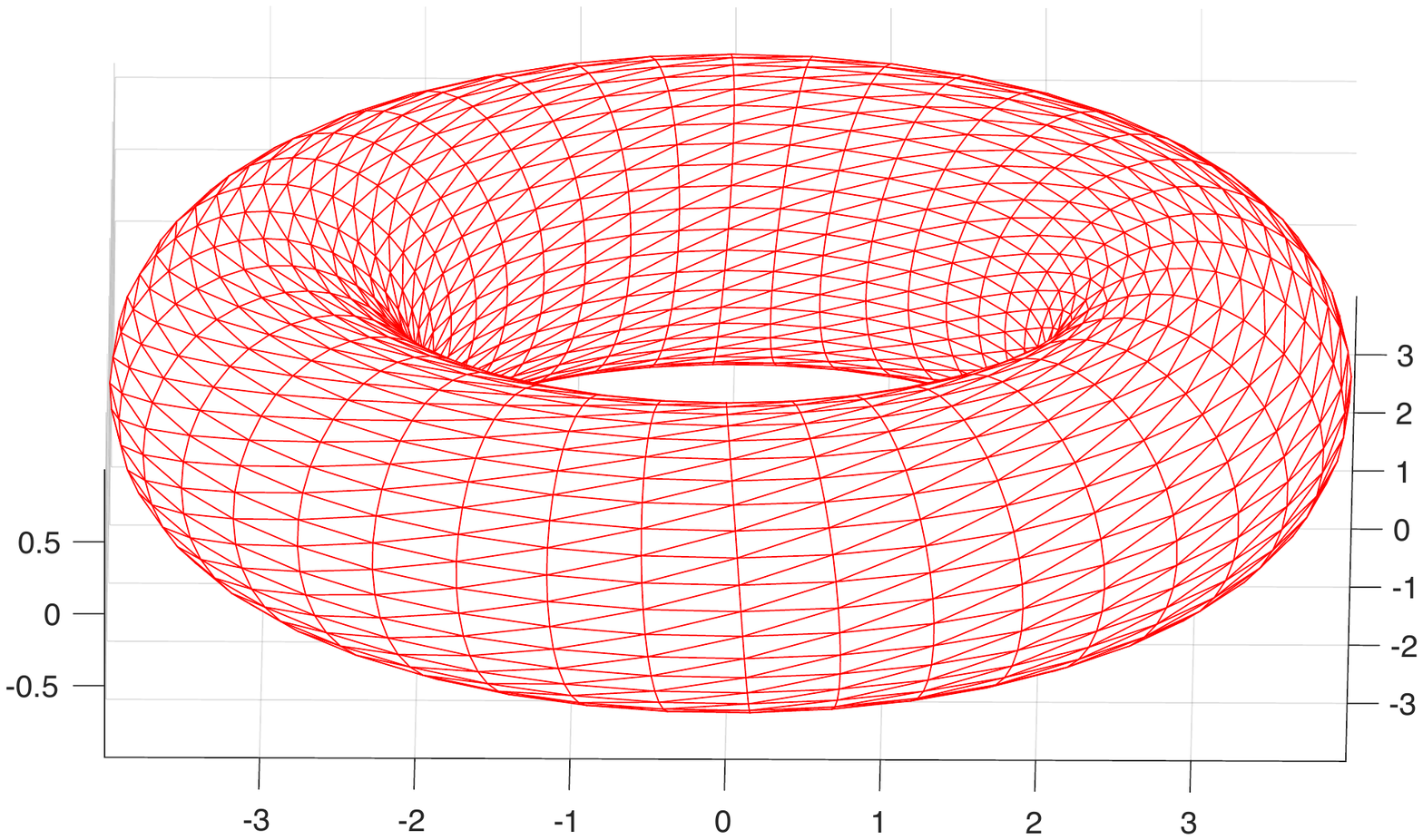}
\end{center}
\centerline{(b) Final Mesh, $\mathbb{M}_K = I$}\
\end{minipage}
\hspace{2mm}
\begin{minipage}[t]{2in}
\begin{center}
\includegraphics[width=2in]{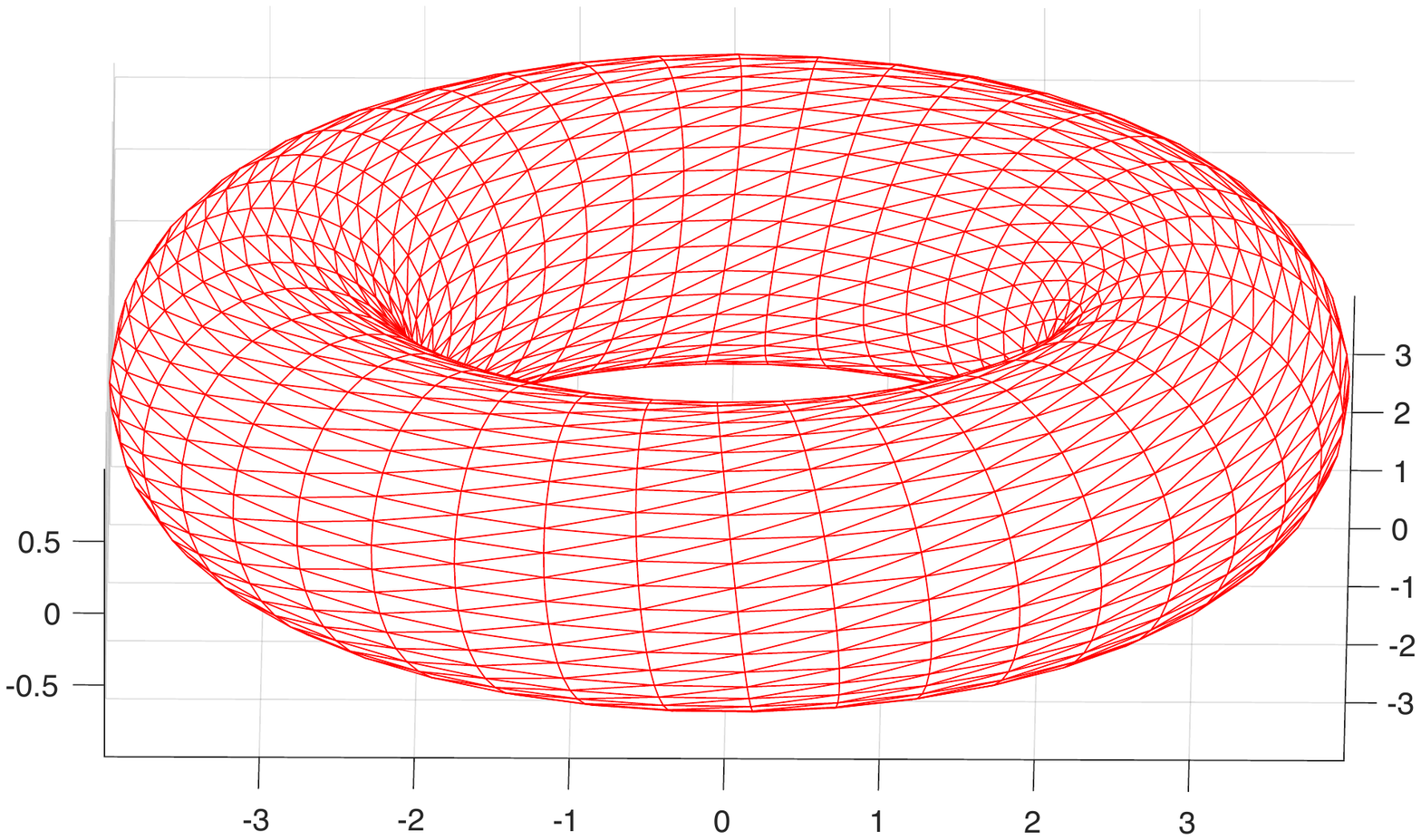}
\end{center}
\centerline{(c) Final Mesh, $\mathbb{M}_K = (k_K+\epsilon)I$}\
\end{minipage}
}
\hbox{
\begin{minipage}[t]{2in}
\begin{center}
\includegraphics[width=2in]{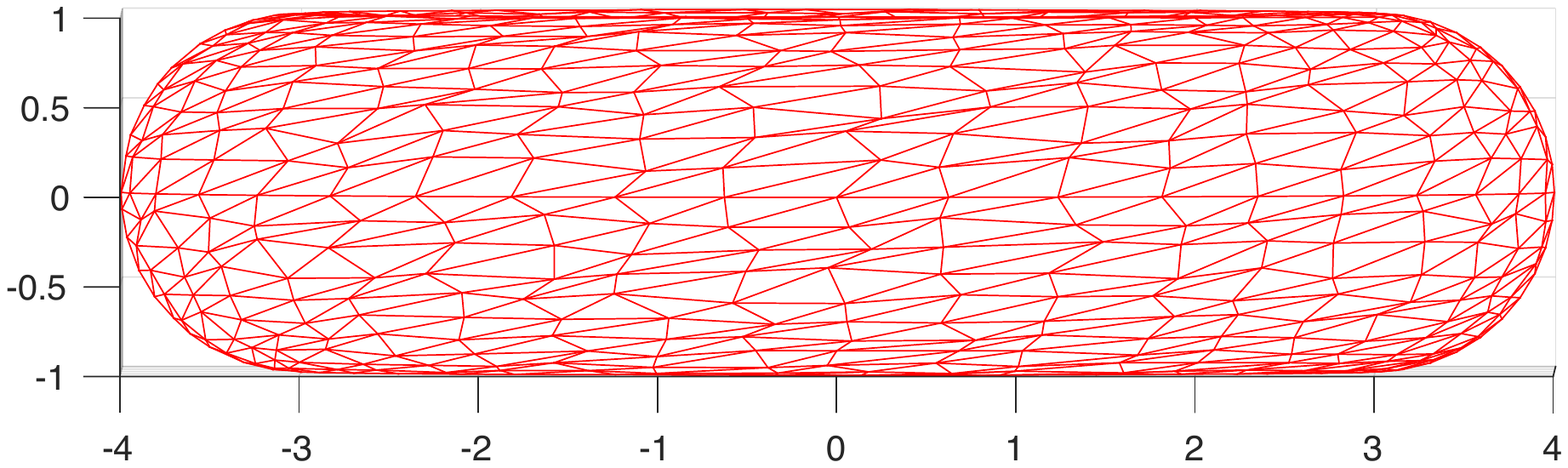}
\end{center}
\centerline{(d) side view of (a)}
\end{minipage}
\hspace{2mm}
\begin{minipage}[t]{2in}
\begin{center}
\includegraphics[width=2in]{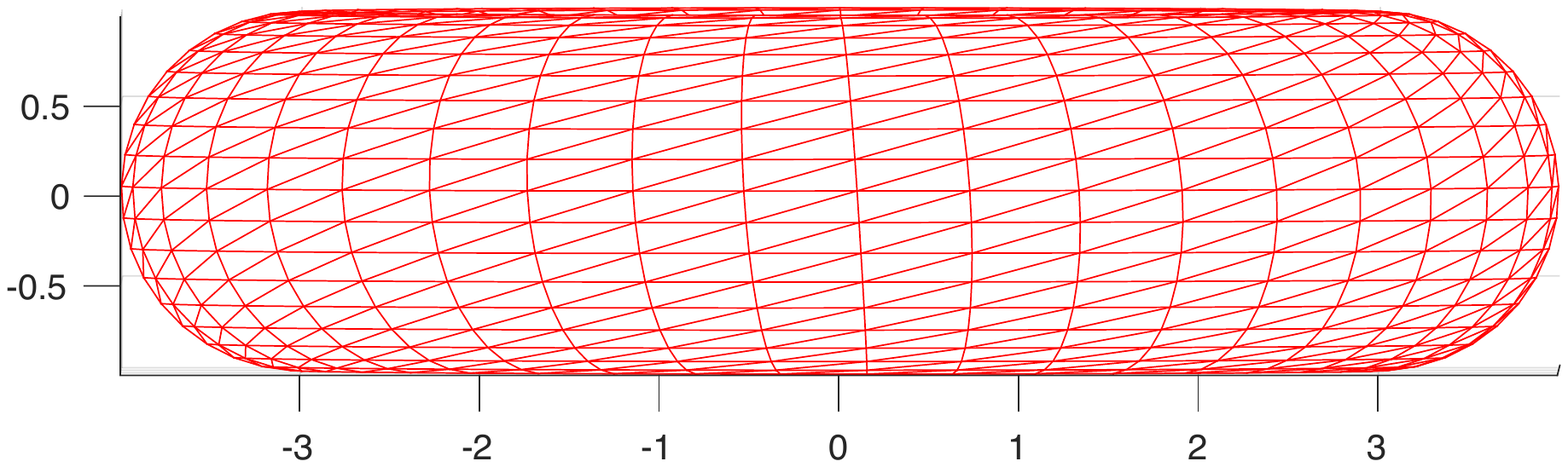}
\end{center}
\centerline{(e) side view of (b)}
\end{minipage}
\hspace{2mm}
\begin{minipage}[t]{2in}
\begin{center}
\includegraphics[width=2in]{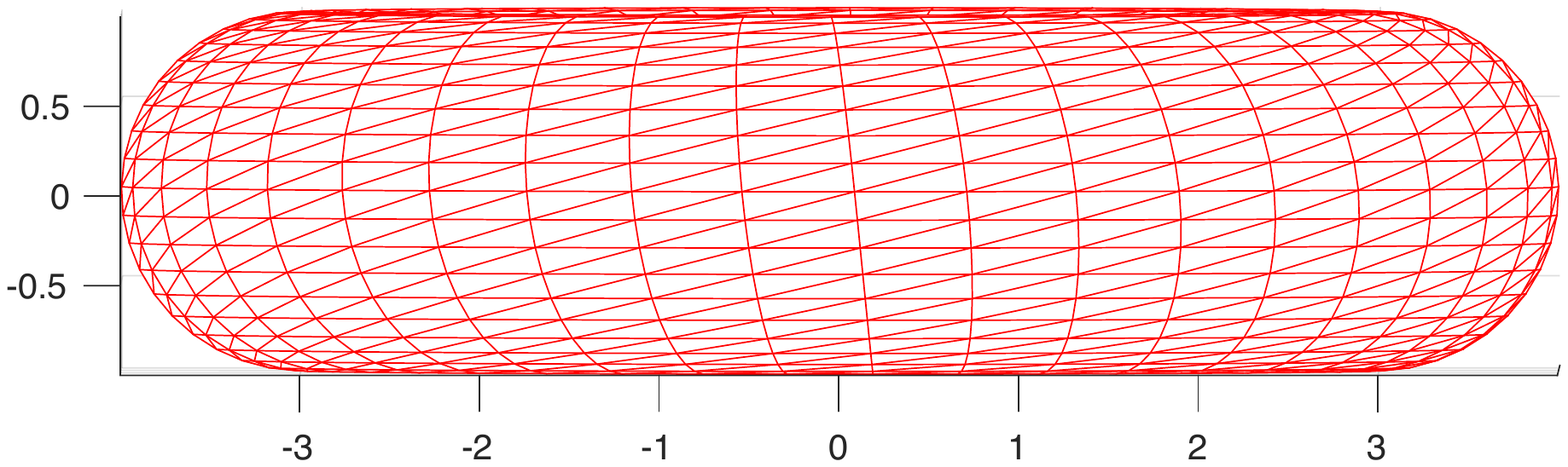}
\end{center}
\centerline{(f) side view of (c)}
\end{minipage}
}
\caption{Example~\ref{torus}. Meshes of $N = 3200$ are obtained for the surface $\Phi(x,y,z) = (2-\sqrt{x^2+y^2})^2+z^2-1$.
}
\label{figtorus}
\end{center}
\end{figure}
\end{exam}

\vspace{10pt}

\begin{exam}
\label{cylinder}
The second three-dimensional example is the cylinder defined by
\[
\Phi(x,y,z) = x^2+y^2-1,
\]
where $z \in [-2,2]$. For this example we take $N = 3200$. Two boundary nodes were fixed, $\V x_1 = ( 0,1,-2 )$ and $\V x_1 = (0,1,2 )$, but the remaining boundary nodes were allowed to slide along the boundary. Although the cylinder has constant curvature like Example \ref{torus}, this example shows the adaptation on a surface with a boundary. 

Fig. \ref{figcylinder} shows the adaptive meshes for the cylinder in two different views. For both $\mathbb{M}_K = I$ and $\mathbb{M}_K = (k_K + \epsilon)I$, the mesh becomes much more uniform and identical. This is consistent with the constant curvature of the cylinder hence the nodes do not concentrate in any specific region of the surface. The equidistribution quality measure improves from 19.07656 to 1.054857 and the alignment quality measure from 23.35403 to 1.192268. The fact that the final quality measures for both conditions are close to 1 indicates that the final meshes are close to satisfying conditions (\ref{equ}) and (\ref{ali}).

\begin{figure}[h]
\begin{center}
\hbox{
\begin{minipage}[t]{2in}
\begin{center}
\includegraphics[width=2in]{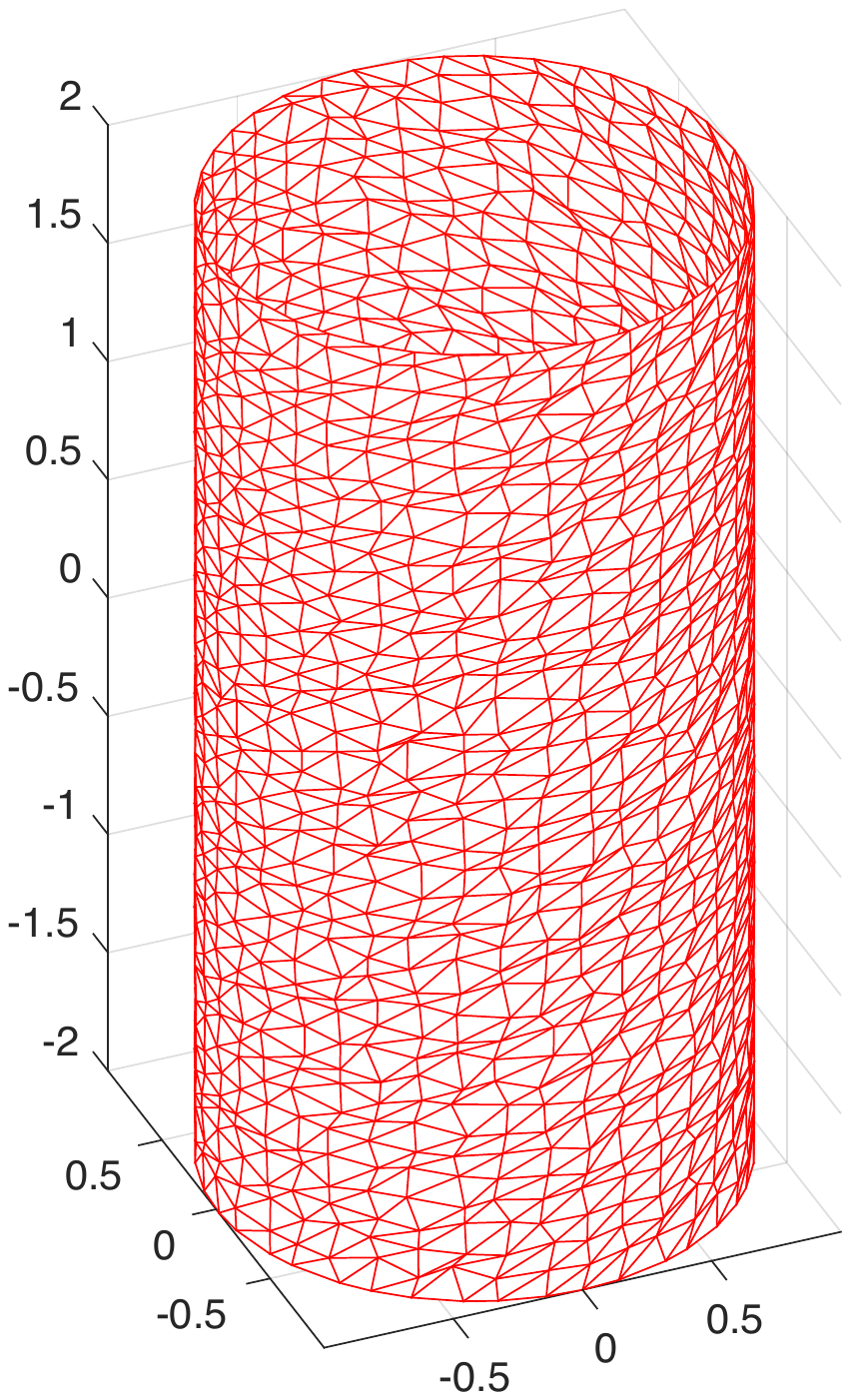}
\end{center}
\vspace{0.42cm}
\centerline{(a) Initial Mesh}\
\end{minipage}
\hspace{2mm}
\begin{minipage}[t]{2in}
\begin{center}
\includegraphics[width=2in]{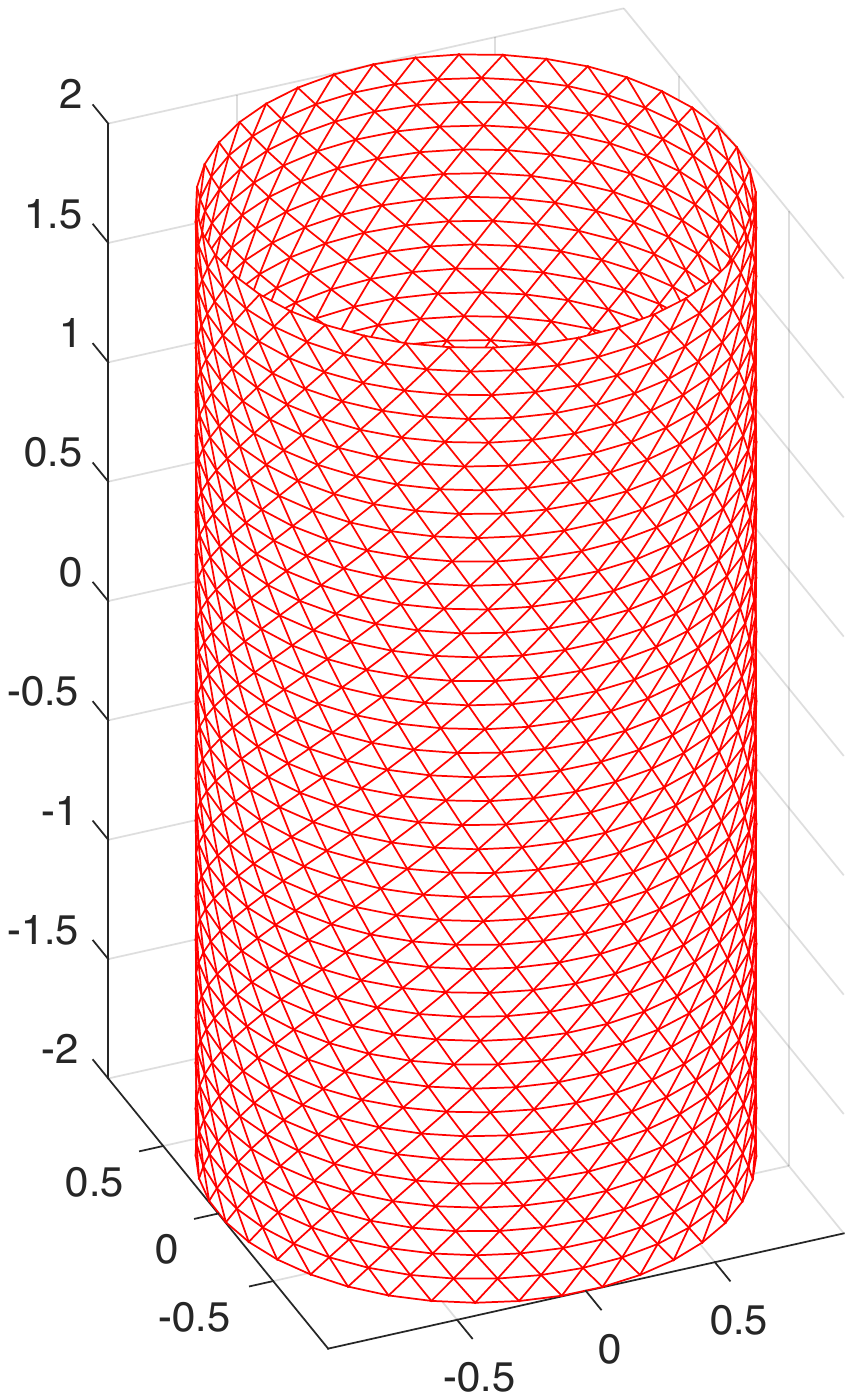}
\end{center}
\centerline{(b) Final Mesh $\mathbb{M}_K = I$}\
\end{minipage}
\hspace{2mm}
\begin{minipage}[t]{2in}
\begin{center}
\includegraphics[width=2in]{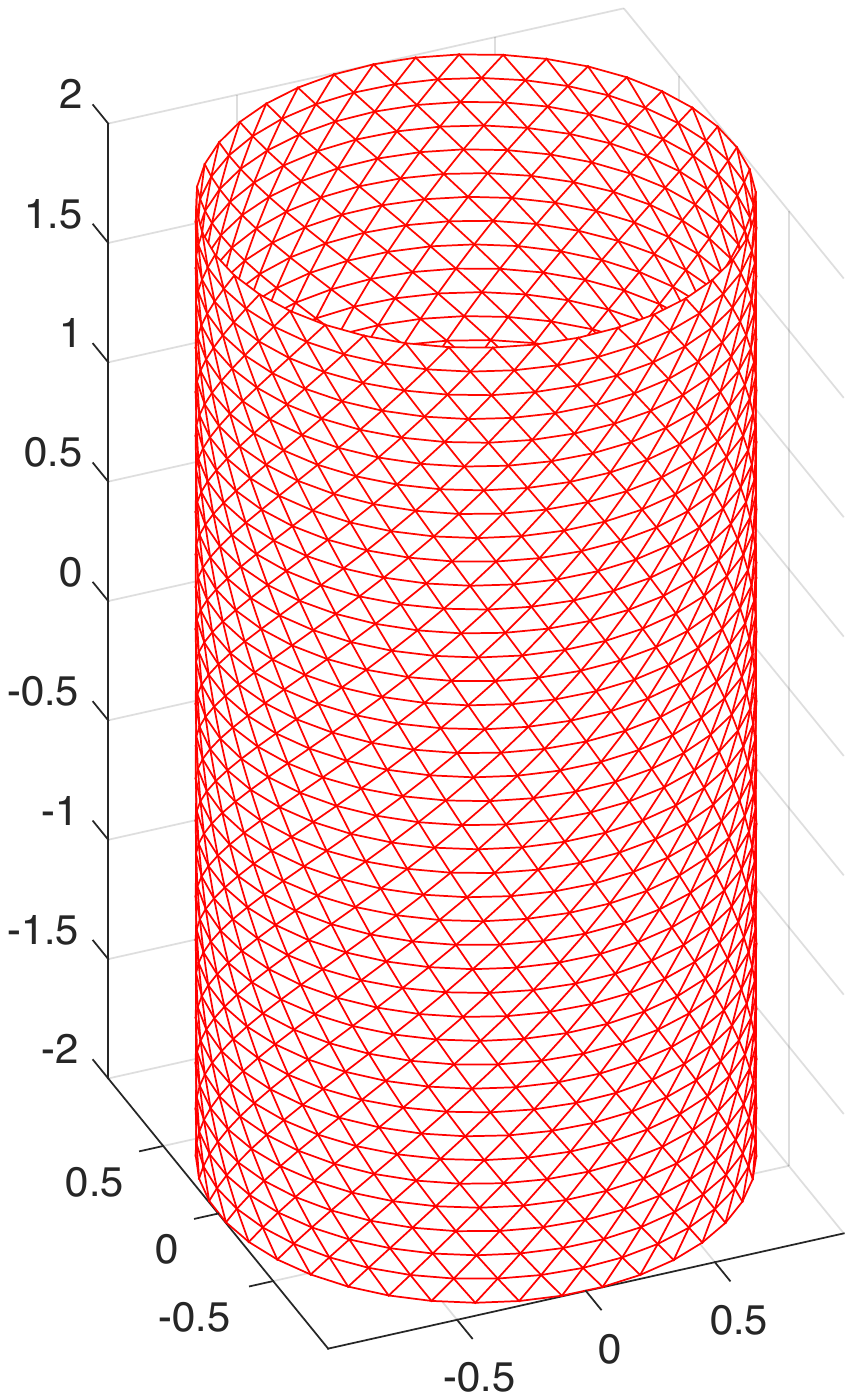}
\end{center}
\centerline{(c) Final Mesh $\mathbb{M}_K = (k_K+\epsilon)I$}\
\end{minipage}
}
\hbox{
\begin{minipage}[t]{2in}
\begin{center}
\includegraphics[width=2in]{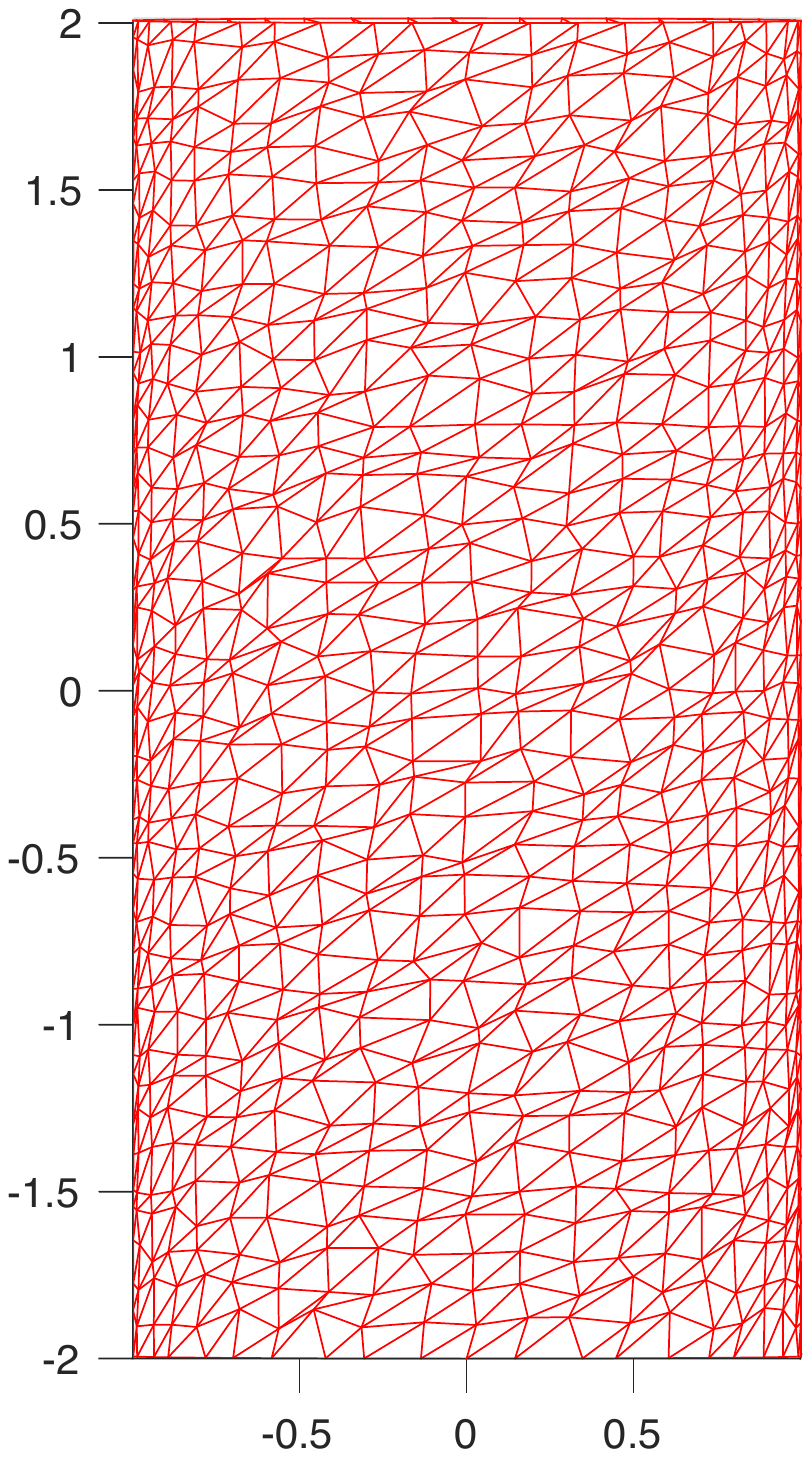}
\end{center}
\centerline{(d) side view of (a)}
\end{minipage}
\hspace{2mm}
\begin{minipage}[t]{2in}
\begin{center}
\includegraphics[width=2in]{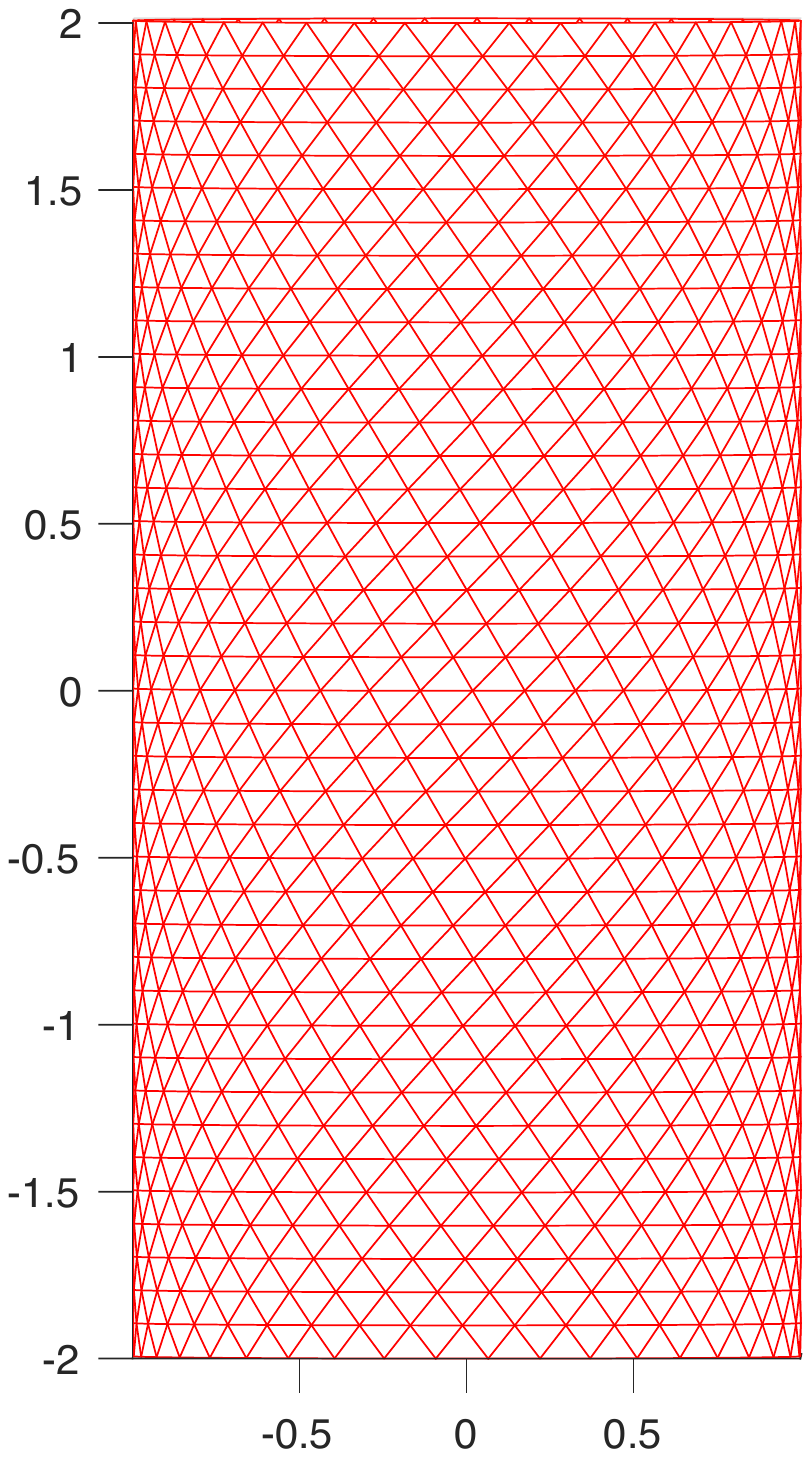}
\end{center}
\centerline{(e) side view of (b)}
\end{minipage}
\hspace{2mm}
\begin{minipage}[t]{2in}
\begin{center}
\includegraphics[width=2in]{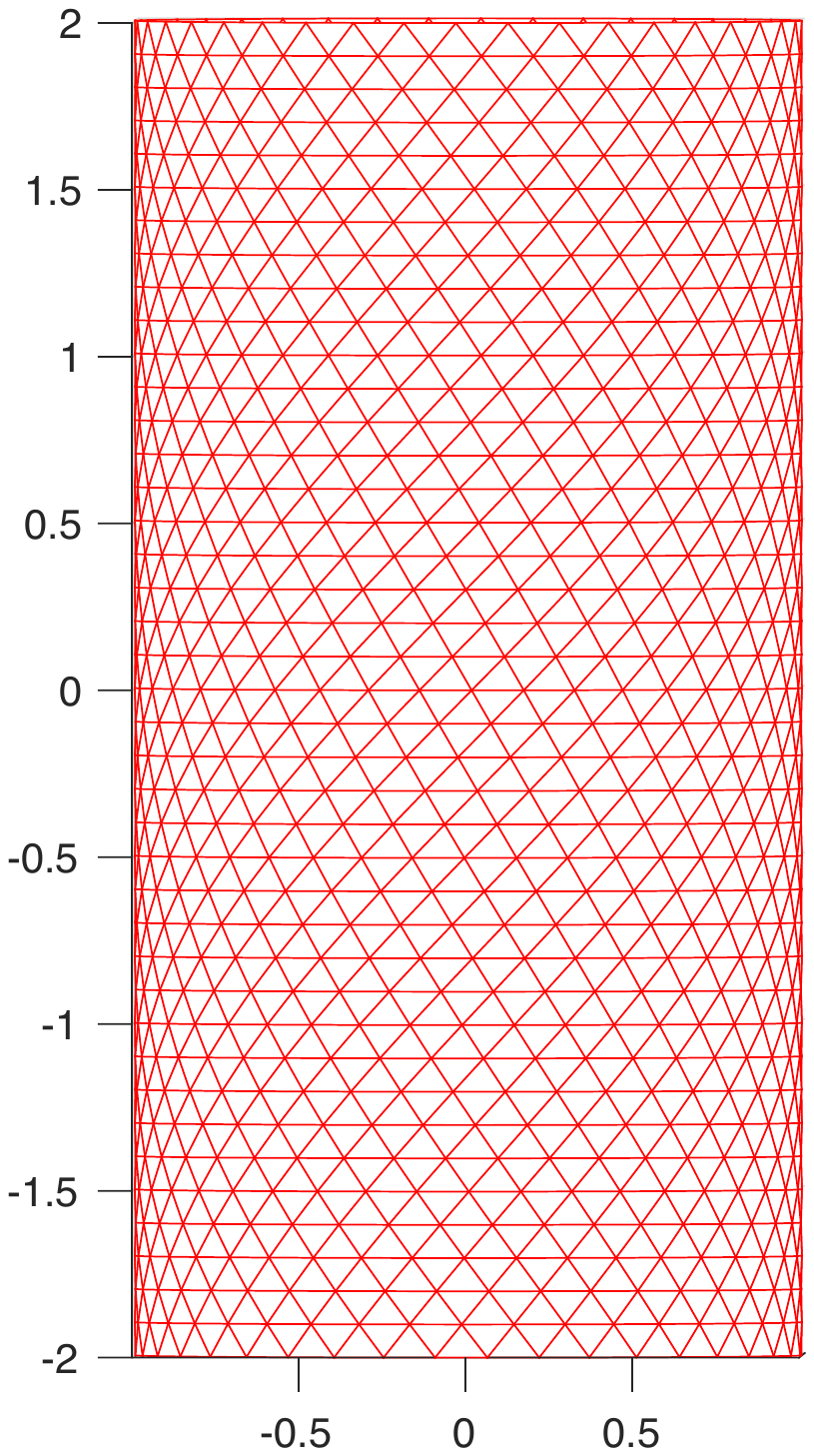}
\end{center}
\centerline{(f) side view of (c)}
\end{minipage}
}
\caption{Example~\ref{cylinder}. Meshes of $N = 3200$ are plotted for $\Phi(x,y,z) = x^2+y^2-1$.
}
\label{figcylinder}
\end{center}
\end{figure}

\end{exam}

\vspace{10pt}

\begin{exam}
\label{sin3d}

Our next example is the sine surface in three dimensions defined by
\[
\Phi(x,y,z) = \sin(x+y) - z.
\]
For this example we take $N= 3200$ and fix the boundary nodes.

Fig. \ref{figsin3d} shows the adaptive meshes for this examples in two different views.  It is clear in Fig. \ref{figsin3d}, when $\mathbb{M}_K = I$, the mesh becomes much more uniform with respect to the Euclidean metric from the initial mesh Fig. \ref{figsin3d}(a) to the final mesh Fig. \ref{figsin3d}(b). The top view of the surface, Fig. \ref{figsin3d}(d) and (e), further confirms this observation.  It is also supported by the improvement of the quality measures from $Q_{eq} = 4.234781$ and $Q_{ali}=6.643755$ to $Q_{eq} = 1.669880$ and $Q_{ali}=1.702617$. 

\begin{figure}[h]
\begin{center}
\hbox{
\begin{minipage}[t]{2in}
\begin{center}
\includegraphics[width=2in]{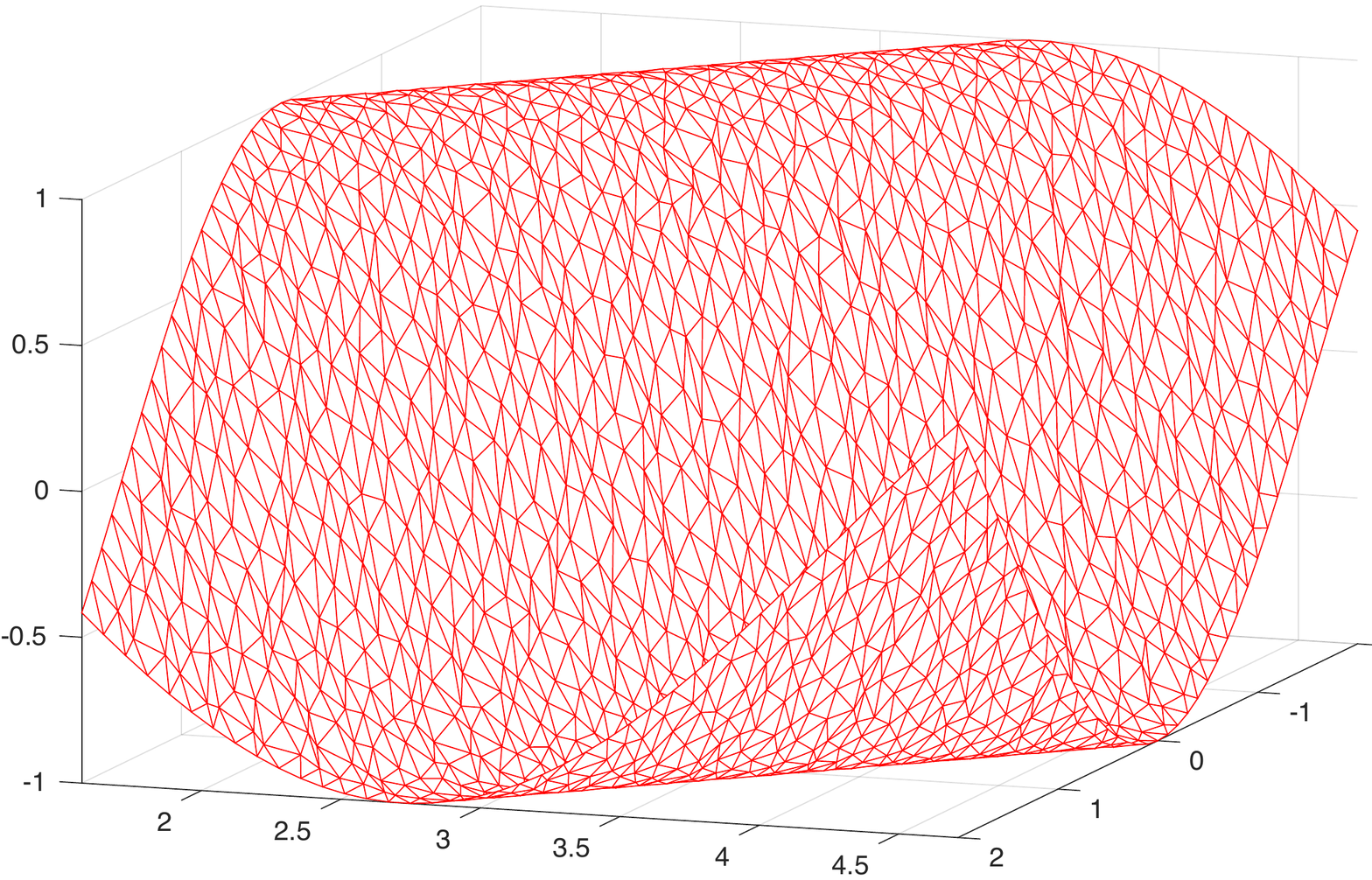}
\end{center}
\vspace{0.42cm}
\centerline{(a) Initial Mesh}
\end{minipage}
\hspace{2mm}
\begin{minipage}[t]{2in}
\begin{center}
\includegraphics[width=2in]{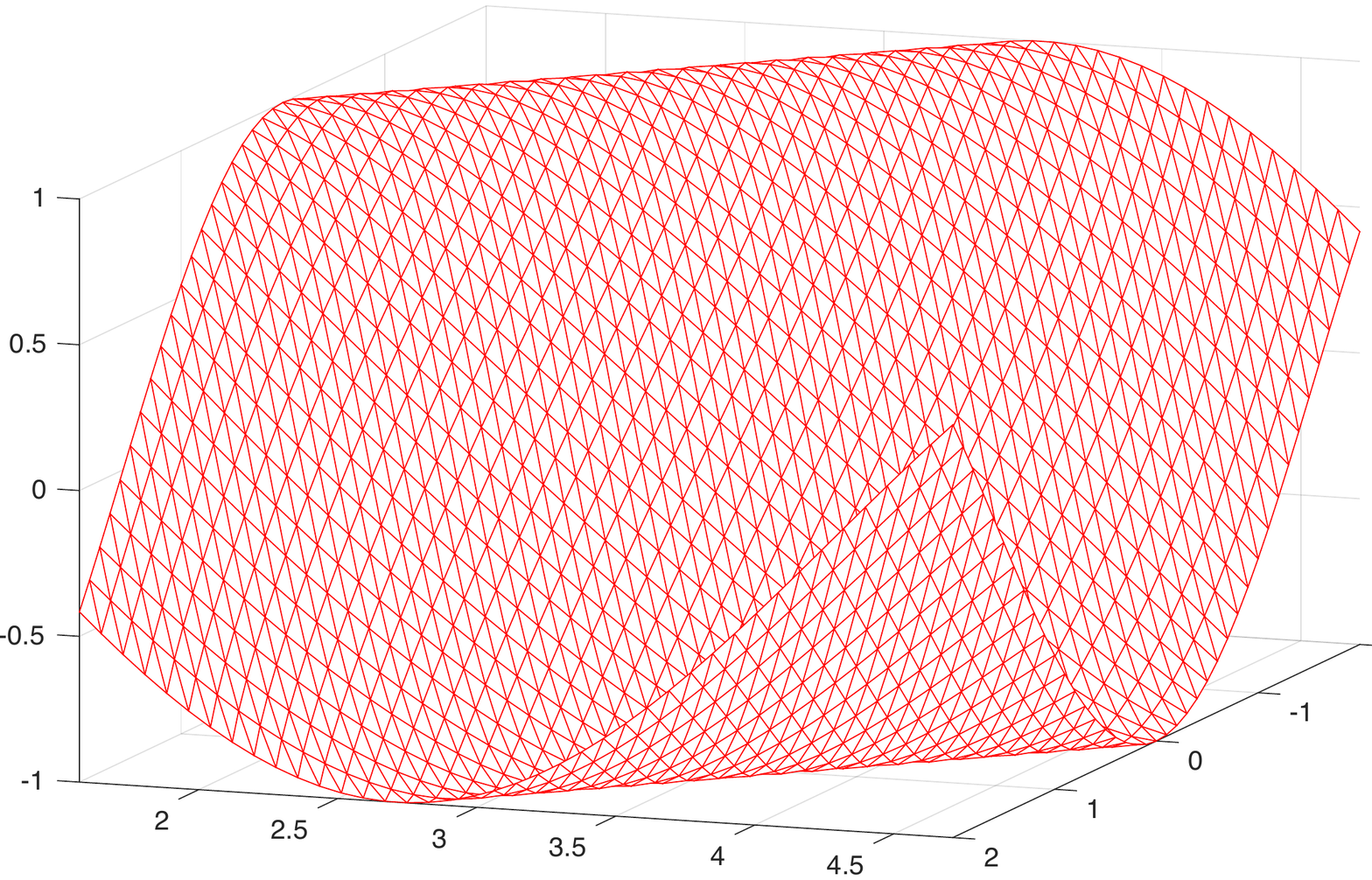}
\end{center}
\centerline{(b) Final Mesh, $\mathbb{M}_K = I$}
\end{minipage}
\hspace{2mm}
\begin{minipage}[t]{2in}
\begin{center}
\includegraphics[width=2in]{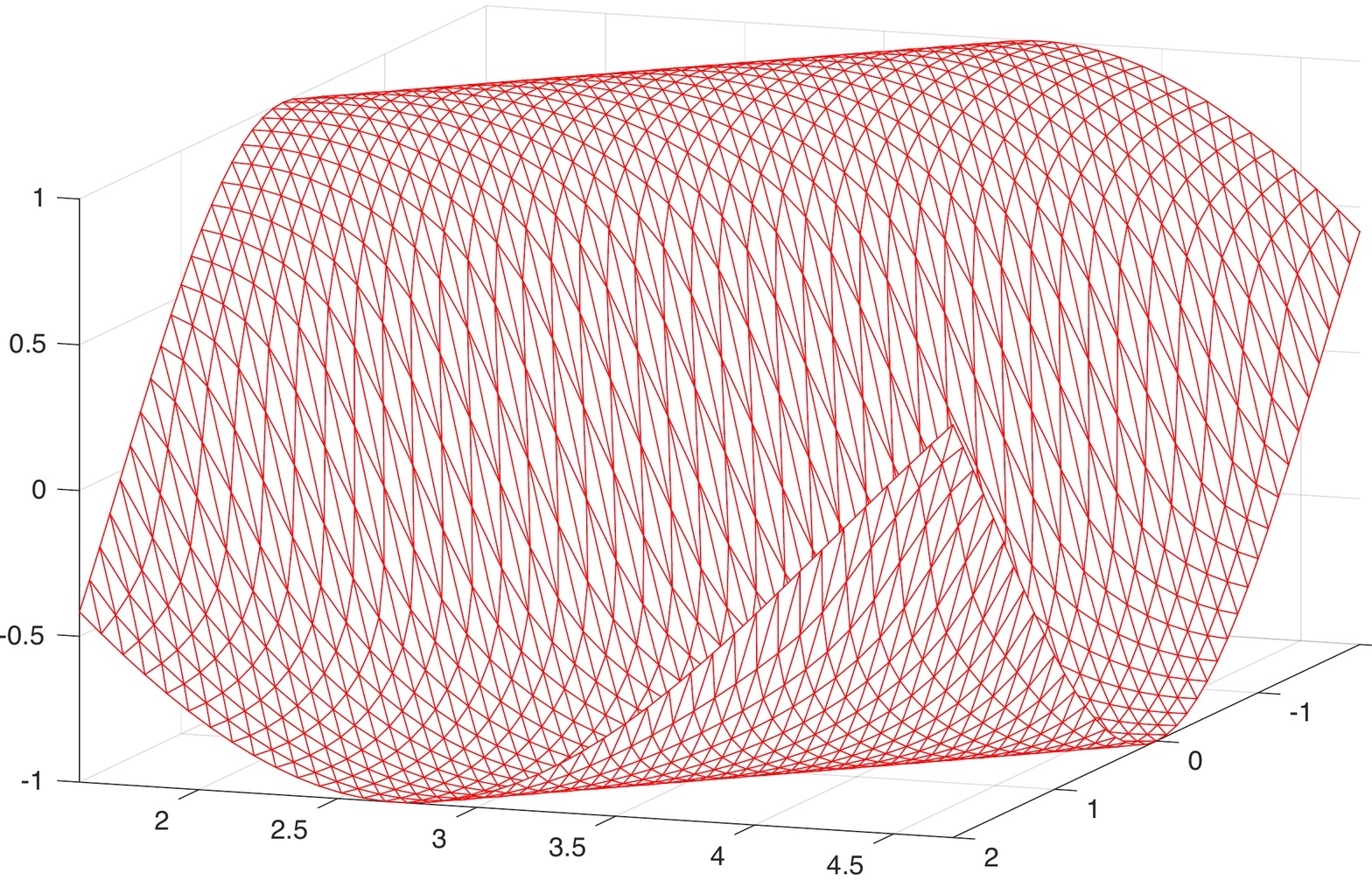}
\end{center}
\centerline{(c) Final Mesh, $\mathbb{M}_K = (k_K+\epsilon)I$}
\end{minipage}
}
\hbox{
\begin{minipage}[t]{2in}
\begin{center}
\includegraphics[width=2in]{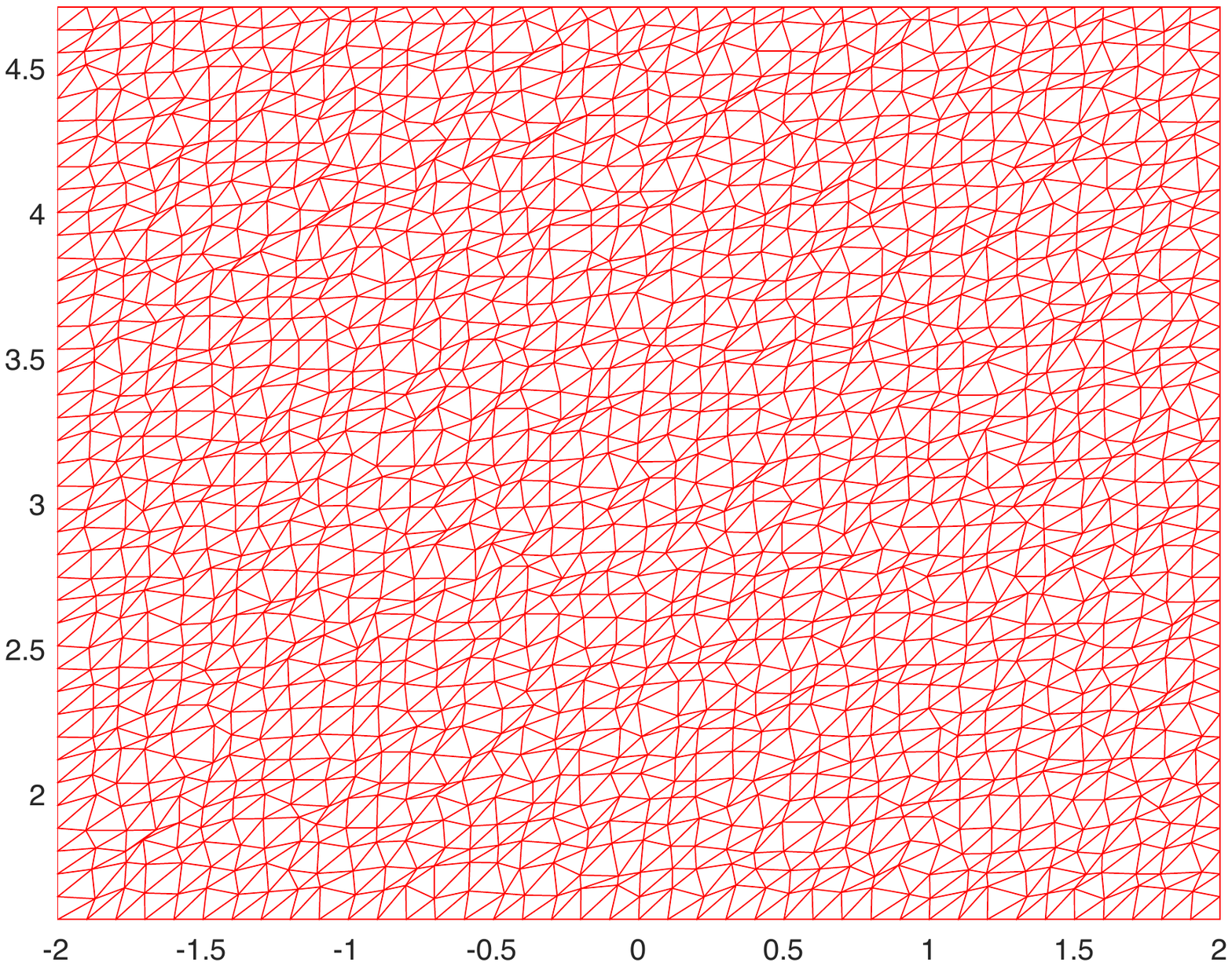}
\end{center}
\centerline{(d) top view of (a)}
\end{minipage}
\hspace{2mm}
\begin{minipage}[t]{2in}
\begin{center}
\includegraphics[width=2in]{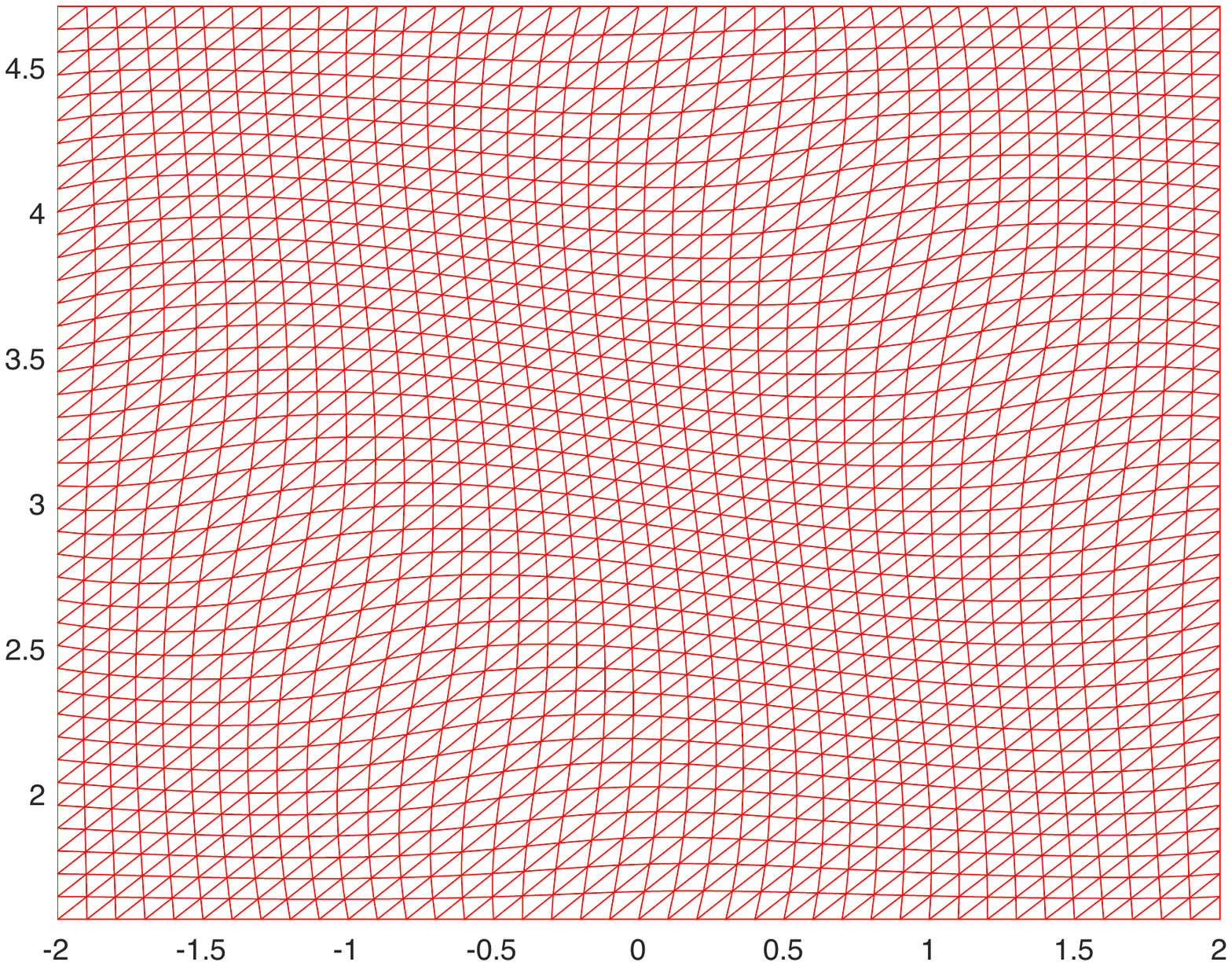}
\end{center}
\centerline{(e) top view of (b)}
\end{minipage}
\hspace{2mm}
\begin{minipage}[t]{2in}
\begin{center}
\includegraphics[width=2in]{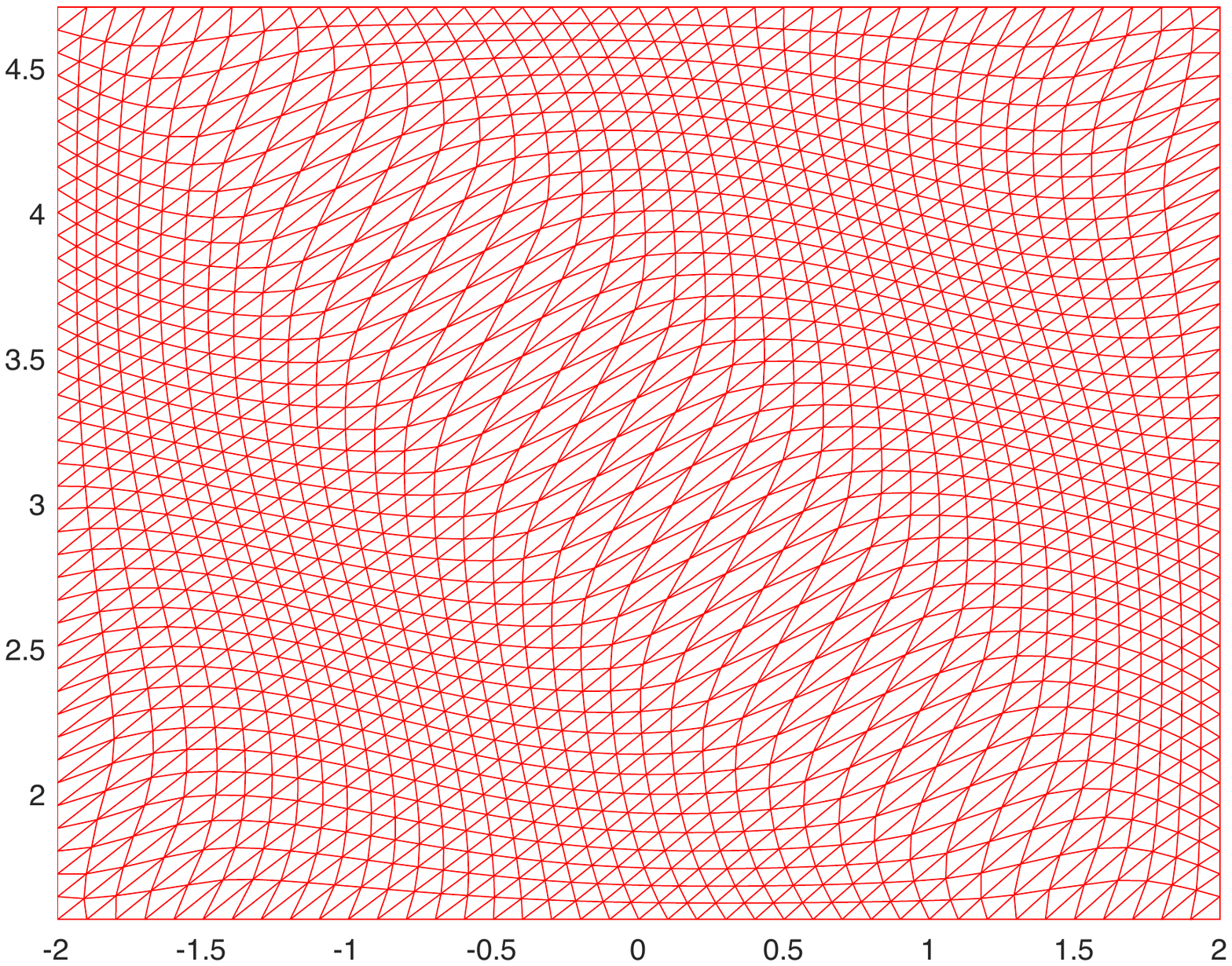}
\end{center}
\centerline{(f) top view of (c)}
\end{minipage}
}
\caption{Example~\ref{sin3d}. Meshes of $N = 3200$ for the surface $\Phi(x,y,z) = \sin(x+y) - z$.
}
\label{figsin3d}
\end{center}
\end{figure}

When $\mathbb{M}_K$ is curvature-based, we see a similar result to Example~\ref{sin2d}. That is, Fig. \ref{figsin3d}(c) and (f) show that the elements are more concentrated in those regions of the surface with larger curvature, i.e., the dip when $z = -1$ and the hill when $z = 1$.  The quality measures with respect to the metric tensor improve from $Q_{eq} = 21.696868$ to $Q_{eq} =1.634091$ and $Q_{ali}=6.527829$ to  $Q_{ali}=2.586702$. The final quality measure for the equidistribution condition close to 1 hence indicating that the final mesh is close to satisfying (\ref{equ}). The final quality measure for the alignment condition is not as close to 1 as the equidistribution condition.  Recall that $\theta$ in the meshing function (\ref{Ih-1}) balances equidistribution and alignment and the choice $\theta = 1/3$ has been used in the computation.
Further computations show that increasing $\theta$ will improve the alignment quality but worsen the equidistribution quality,
and vice versa. This suggests that a perfectly uniform mesh cannot be obtained by minimizing (\ref{Ih-1}) for the curvature-based metric tensor for this example.

Finally, we would like to take a look at the changes of $I_h$ and $|K|_{\min}$ along the mesh trajectory.
As we recall from Section \ref{SEC:theor}, $|K|$ is bounded from below and $I_h$ is decreasing. These can be seen numerically for $\mathbb{M}_K = I$ in Fig. \ref{figsin3d3}(a) and Fig. \ref{figsin3d3}(b). Similar to what we saw in Example \ref{sin2d}, Fig. \ref{figsin3d3}(a) shows that $I_h$ is always decreasing and at around $t = 0.10$ begins to converge. In Fig \ref{figsin3d3}(b) we see an initial increase in the
$|K|_{\min}$ value and then it begins to converge to 4.64$\times 10^{-3}\approx\frac{|S|}{N}$ at $t = 0.10$.  This initial increase, as discussed above, is due to the nonuniformity of the initial mesh. That is, the initial mesh is very nonuniform and therefore $|K|_{\min}$ can be very small whereas when the mesh is adapted, the mesh becomes more uniform and hence the values of $|K|\approx\frac{|S|}{N}$ become almost identical.  This implies that the value of $|K|_{\min}$ is likely to increase as the mesh adapts.

For the case with $\mathbb{M}_K = (k_K + \epsilon) I$, Fig. \ref{figsin3d3}(c) and (d) show similar findings. In \ref{figsin3d3}(c) we see that $I_h$ is decreasing for all time and converging beginning at around $t = 0.15$. Fig. \ref{figsin3d3}(d) shows $|K|_{\min}$ initially increases then begins to converge to about 2.0$\times 10^{-4}$. Furthermore, $|K|_{\min}$ is bounded below by the initial $|K|_{\min}$ value of $0.90\times 10^{-4}$.  These numerical results for the curvature based metric tensor further support the theoretical predictions. 

\begin{figure}[h]
\begin{center}
\hbox{
\begin{minipage}[t]{3in}
\begin{center}
\includegraphics[width=2.75in]{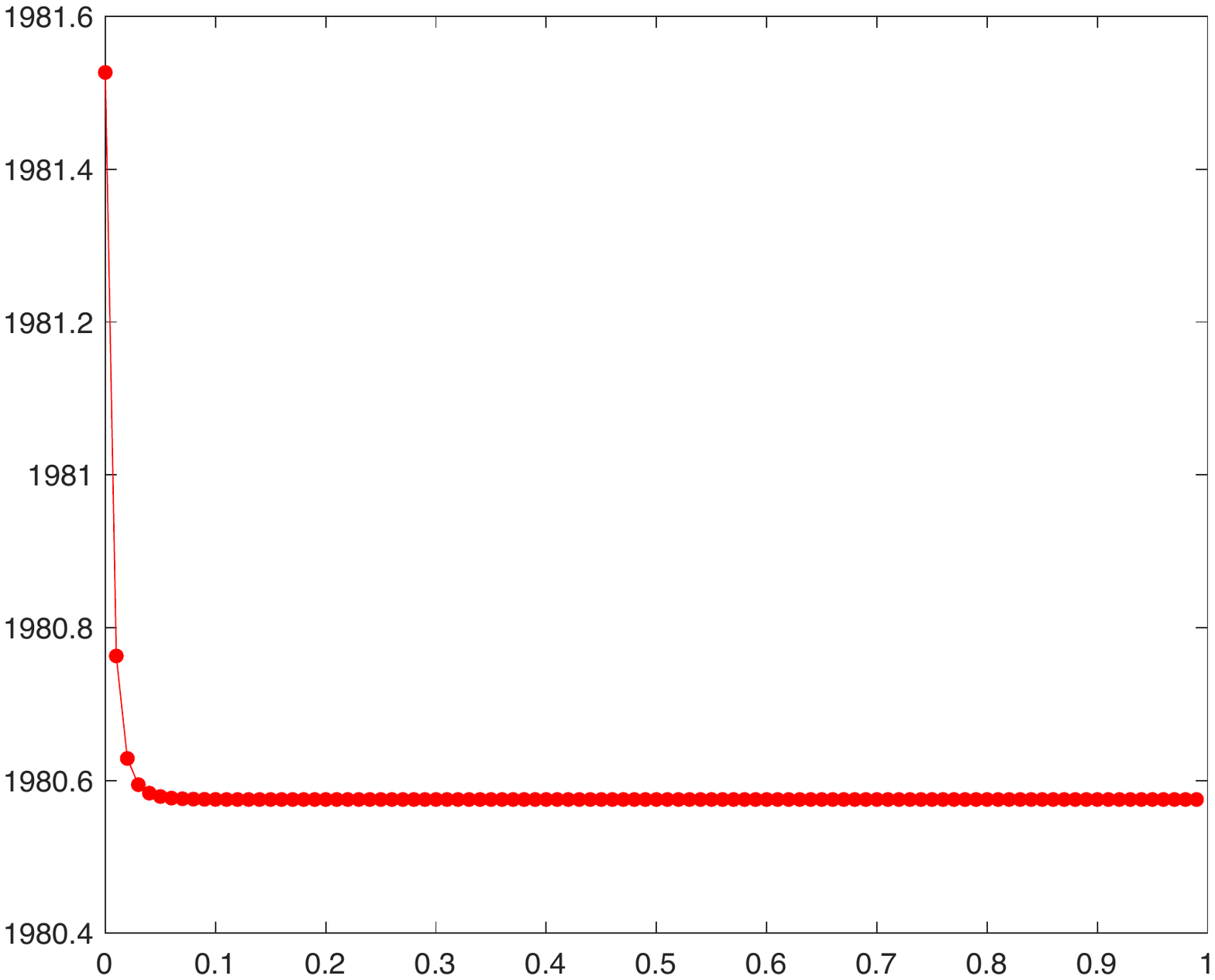}
\end{center}
\vspace{0.42cm}
\centerline{(a) $I_h$, $\mathbb{M}_K = I$}
\end{minipage}
\hspace{2mm}
\begin{minipage}[t]{3in}
\begin{center}
\includegraphics[width=2.75in]{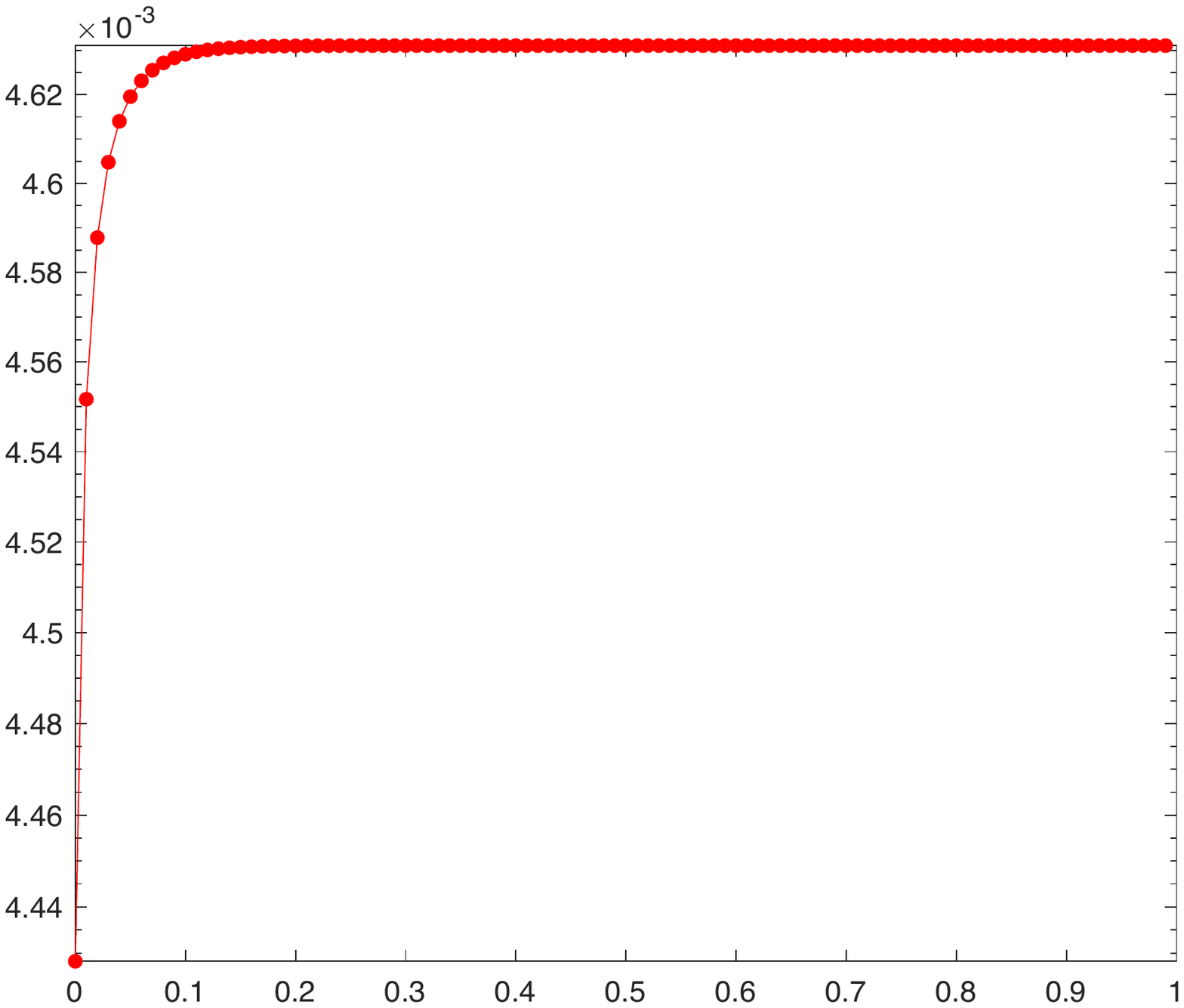}
\end{center}
\centerline{(b) $|K|_{\min}$, $\mathbb{M}_K = I$}
\end{minipage}
}
\hbox{
\begin{minipage}[t]{3in}
\begin{center}
\includegraphics[width=2.75in]{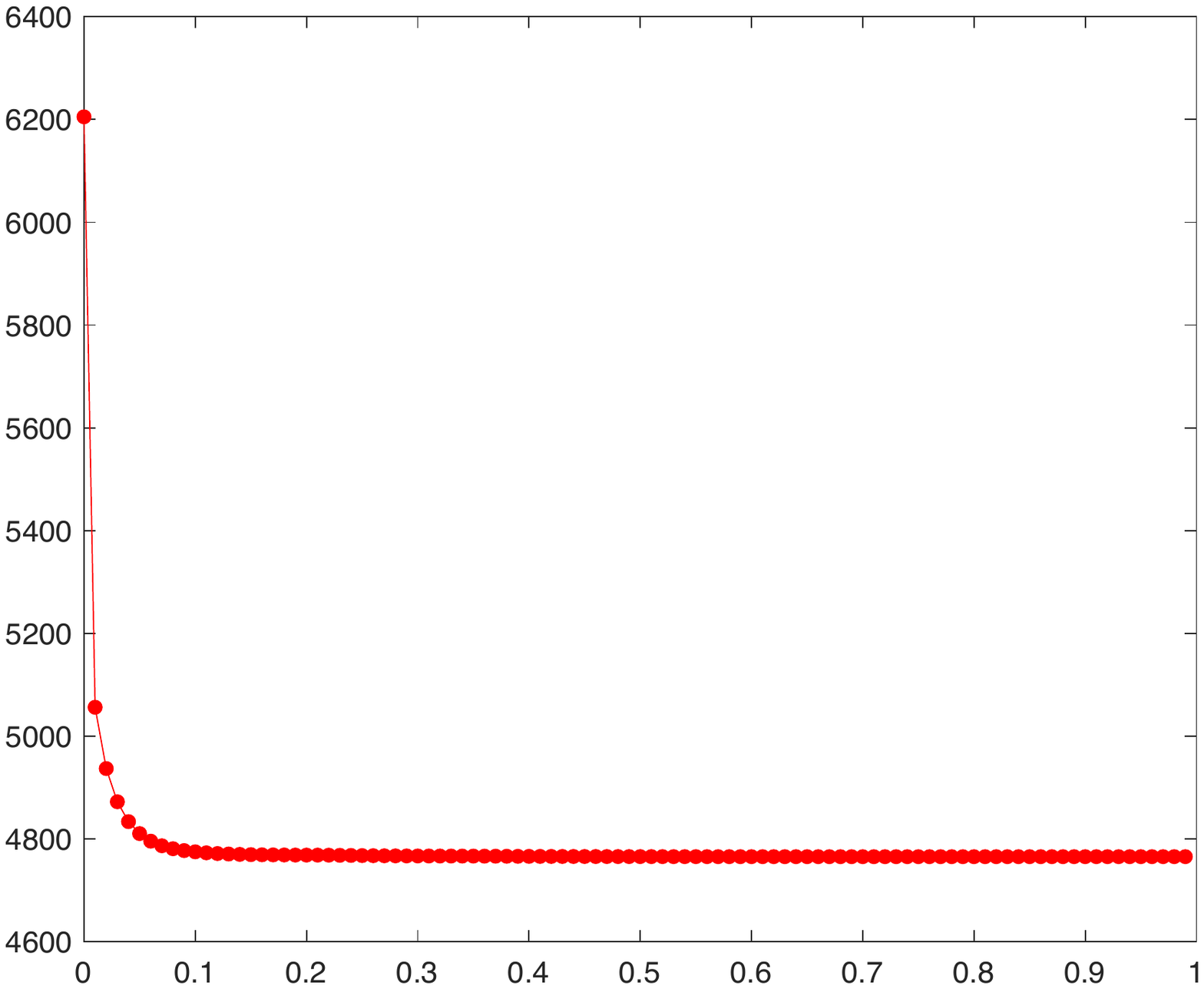}
\end{center}
\vspace{0.42cm}
\centerline{(c) $I_h$, $\mathbb{M}_K =  (k_K + \epsilon) I$}
\end{minipage}
\hspace{2mm}
\begin{minipage}[t]{3in}
\begin{center}
\includegraphics[width=2.75in]{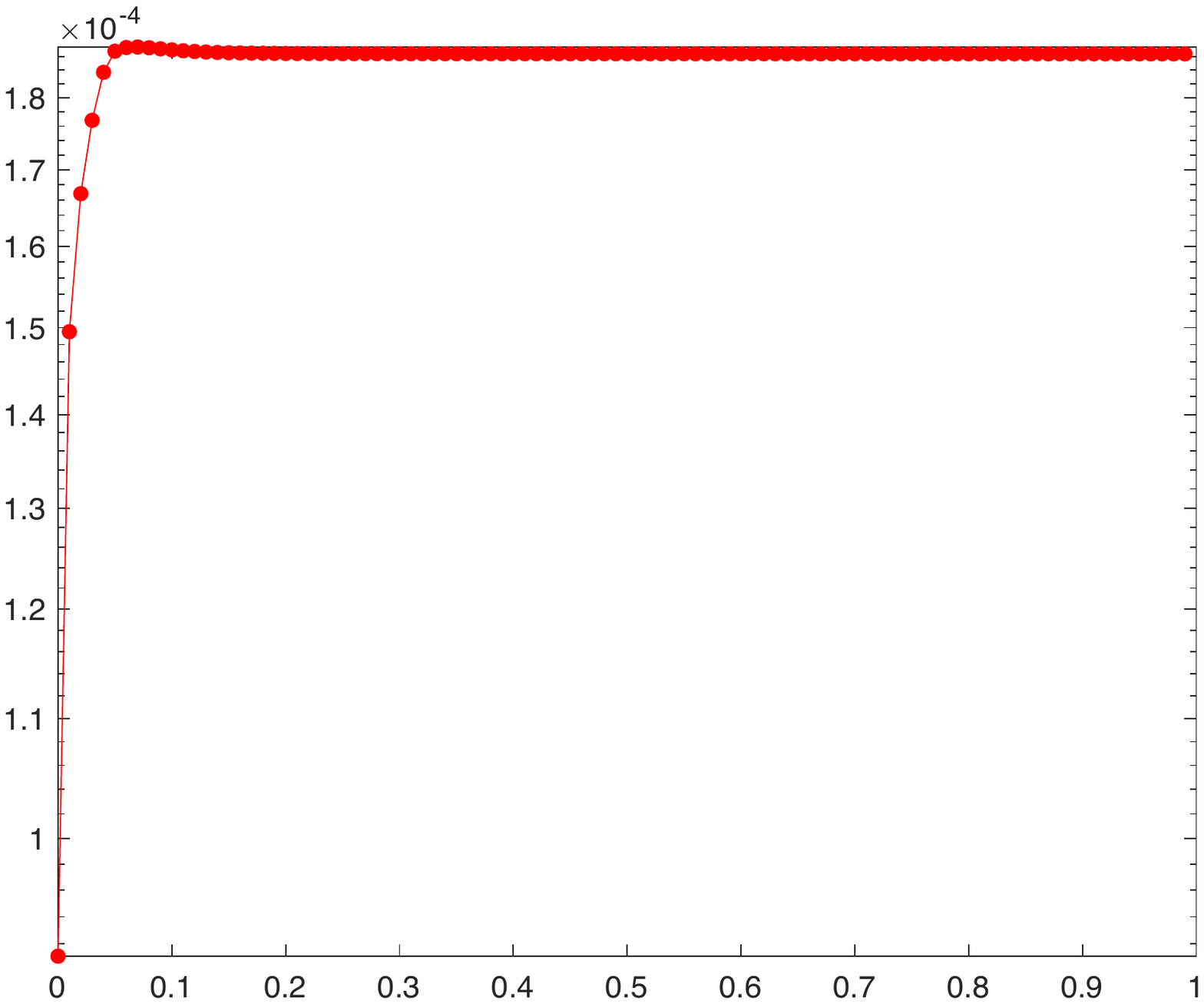}
\end{center}
\centerline{(d) $|K|_{\min}$, $\mathbb{M}_K = (k_K + \epsilon) I$}
\end{minipage}
}
\caption{Example~\ref{sin3d}. $I_h$ and $K_{\min}$ are plotted as functions of $t$ for $\Phi(x,y,z) = \sin(x+y) - z$.
}
\label{figsin3d3}
\end{center}
\end{figure}
\end{exam}

\vspace{10pt}

\begin{exam}
\label{ellipsoid}
Our final example explores the sphere and ellipsoid defined by an icosahedral initial mesh (see \cite{WL} for more details). We begin with the sphere
\[
\Phi(x,y,z) = x^2+y^2+z^2-1.
\]
For this example we take $N= 1280$. 

As we see from Fig. \ref{figsphere}(a), the initial mesh is close to being uniform however, there is a very slight difference in the final mesh Fig. \ref{figsphere}(b) when $\mathbb{M}_K = I$. Indeed, this slight adaptation can be seen in the quality measures which change from
$Q_{eq} = 1.068461$ and $Q_{ali} = 1.025691$ for the initial mesh to $Q_{eq} = 1.289843$
and $Q_{ali} = 1.025972$ for the final mesh. The difference in the quality measures indicates that the initial icosahedral mesh is almost uniform and so the moving mesh method does not affect the mesh significantly.

We further this example to consider adaptive meshes for the ellipsoid defined by 
\[
\Phi(x,y,z) = x^2+y^2+\dfrac{z^2}{4}-1.
\]
We move the mesh on the surface for both $N = 1280$ and $N = 5120$.

First considering $N = 1280$, Fig. \ref{figellipsoid} shows the meshes for this example in two different views.  Studying Fig. \ref{figellipsoid}(a) and Fig. \ref{figellipsoid}(d), the initial mesh, and Fig. \ref{figellipsoid}(b) and Fig. \ref{figellipsoid} (e), the final mesh with $\mathbb{M}_K = I$, we can see that the final mesh adapts to provide a higher concentration of elements in the middle region of the ellipsoid and fewer elements near the tips. The quality measures improve from $Q_{eq} = 1.724289$ and $Q_{ali} = 1.453207$ for the initial mesh to $Q_{eq} = 1.571401$ and $Q_{ali} = 1.102655$ for the final mesh. Although the initial mesh is close to uniform, the final mesh adapts in such a way to satisfy the equidistribution and alignment condition on the surface. However, this is not an accurate representation of the shape thus we consider a curvature-based metric tensor. 

In our numerical experiments, when $\mathbb{M}_K = (k_K + \epsilon)I$ is used,  we saw the mesh adapt in a similar way as with the Euclidean metric. This is because the curvature of the ellipsoid does not change significantly at the tips thus not many nodes move there. With this in mind, we altered the curvature-based metric tensor to concentrate more mesh elements at the tips of the ellipsoid by redefining $\mathbb{M}_K$ as 
\begin{equation}
\tilde{\mathbb{M}}_K = \mathbb{M}_K + \left (\dfrac{1}{\sqrt{(z_K-2)^2 + \epsilon}}+ \dfrac{1}{\sqrt{(z_K+2)^2 + \epsilon}}\right ) I .
\label{M-ellipsoid}
\end{equation}
Fig.~\ref{figellipsoid}(c) and Fig.~\ref{figellipsoid}(f) show the final mesh using this altered metric tensor. As we can see, the mesh elements have concentrated at the tips of the ellipsoid better representing the shape of the surface.  The equidistribution quality measure changes from $1.374300$ initially to $1.967482$ whereas the alignment quality measure from $1.453207$ to $1.262156$. Similar results are seen with a finer mesh in Fig.~\ref{figellipsoid2} for both the Euclidean metric and altered curvature-based metric.

\begin{figure}[tbh]
\begin{center}
\hbox{\hspace{4mm}
\begin{minipage}[t]{3in}
\begin{center}
\includegraphics[width=3in]{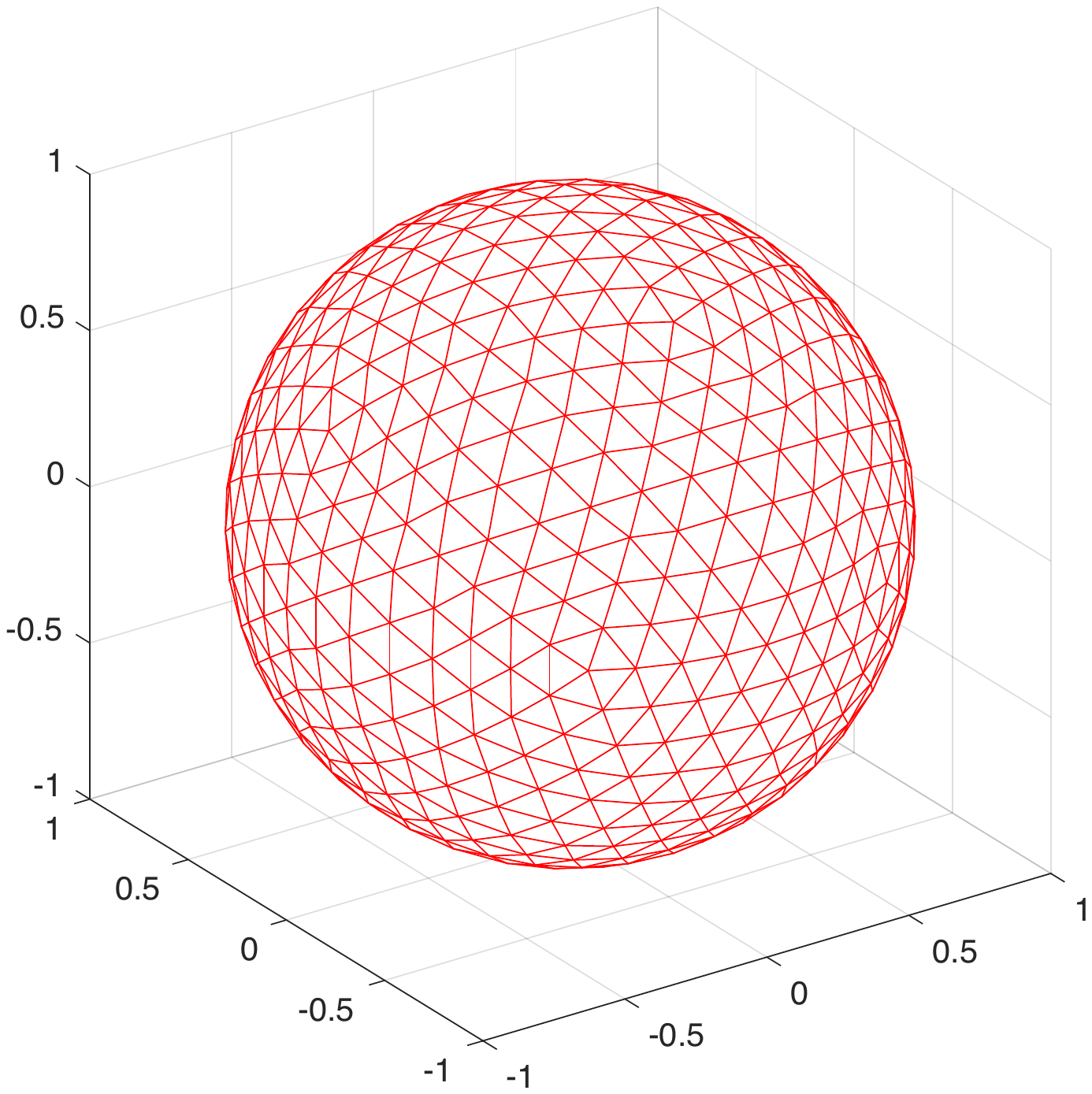}
\end{center}
\vspace{0.42cm}
\centerline{(a) Initial Mesh}\
\end{minipage}
\hspace{4mm}
\begin{minipage}[t]{3in}
\begin{center}
\includegraphics[width=3in]{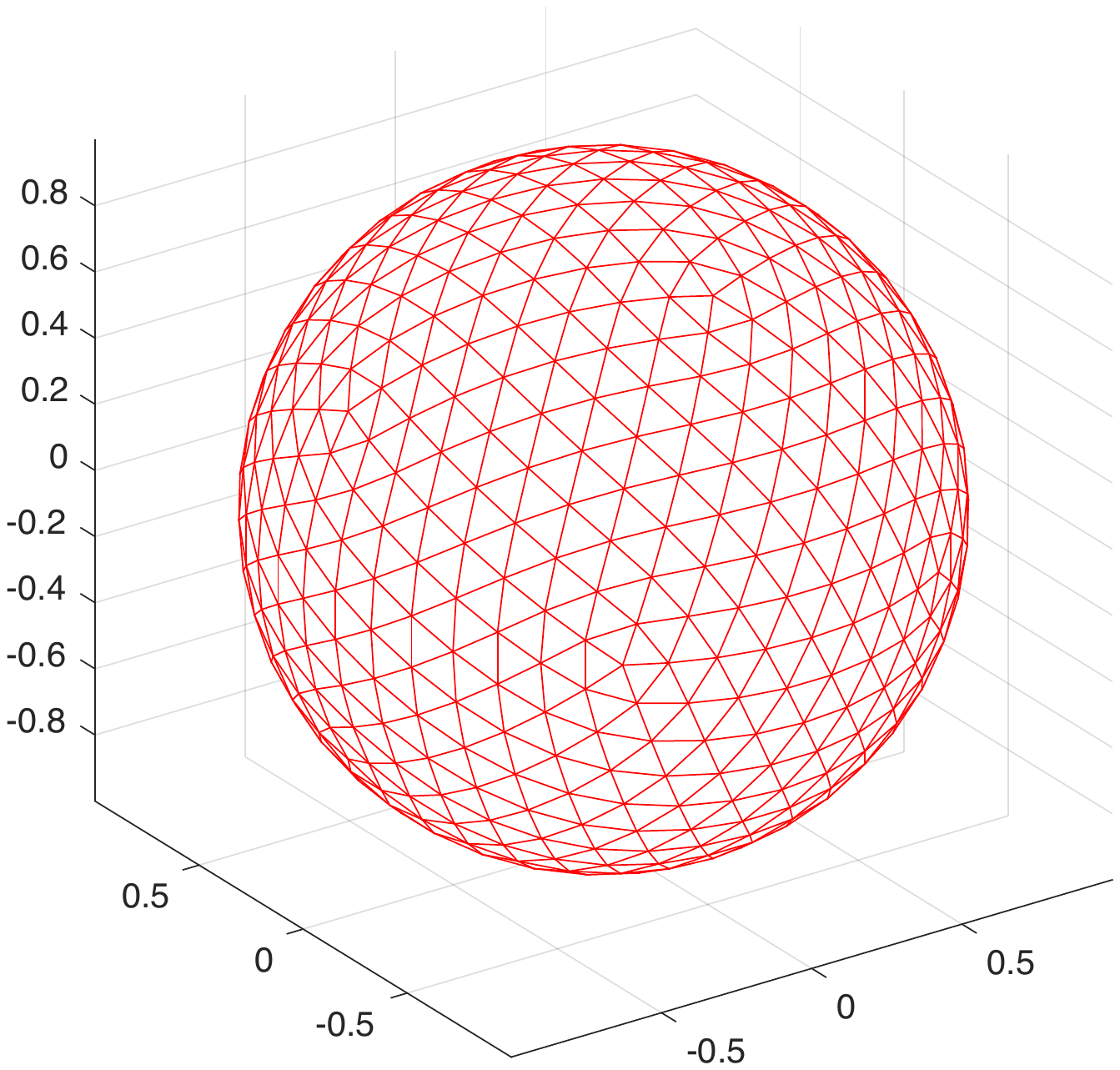}
\end{center}
\centerline{(b) Final Mesh $\mathbb{M}_K = I$}\
\end{minipage}
\hspace{4mm}}
\hbox{\hspace{4mm}
\begin{minipage}[t]{3in}
\begin{center}
\includegraphics[width=2.1in]{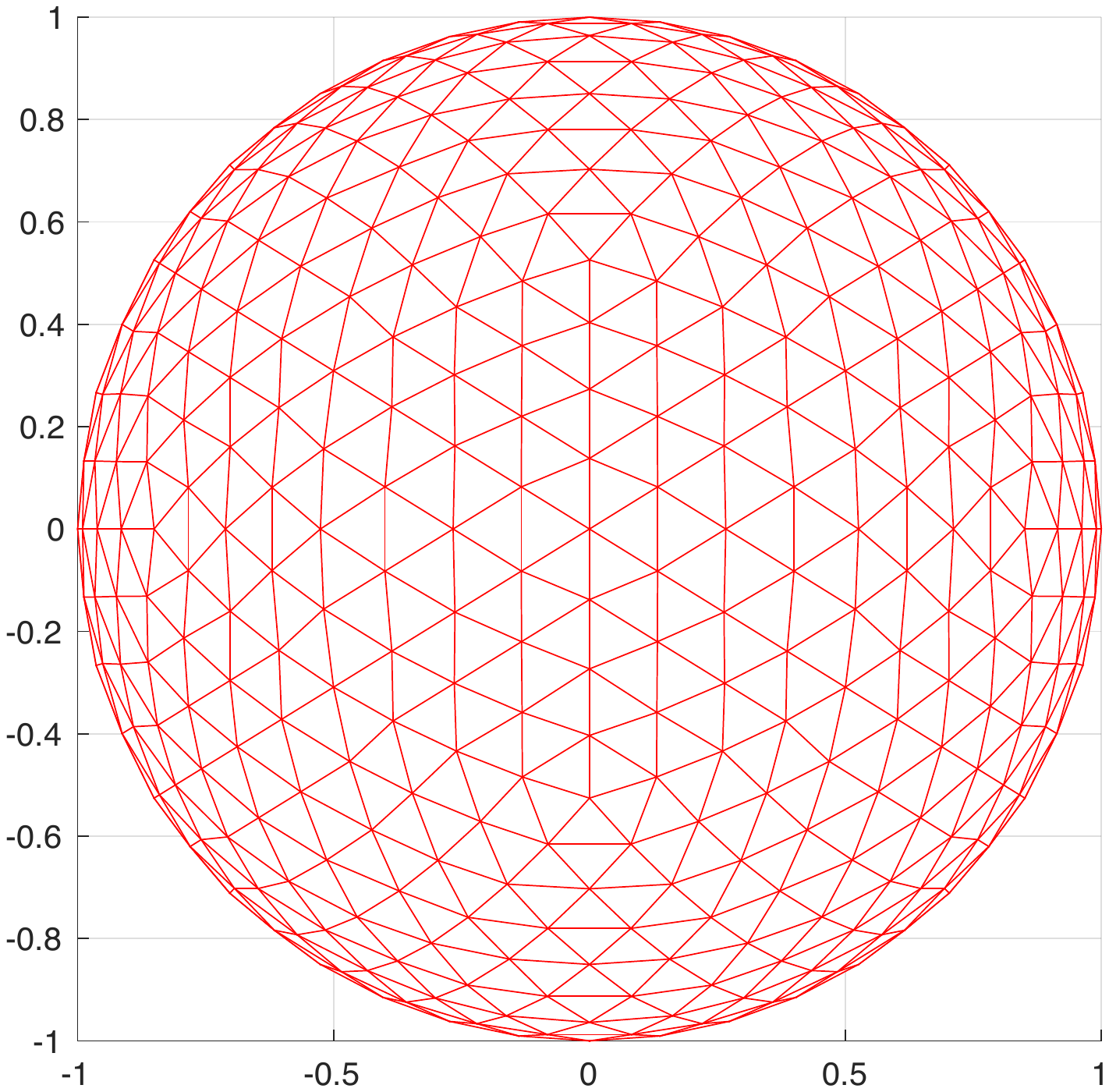}
\end{center}
\centerline{(c) top view of (a)}
\end{minipage}
\hspace{4mm}
\begin{minipage}[t]{3in}
\begin{center}
\includegraphics[width=2.1in]{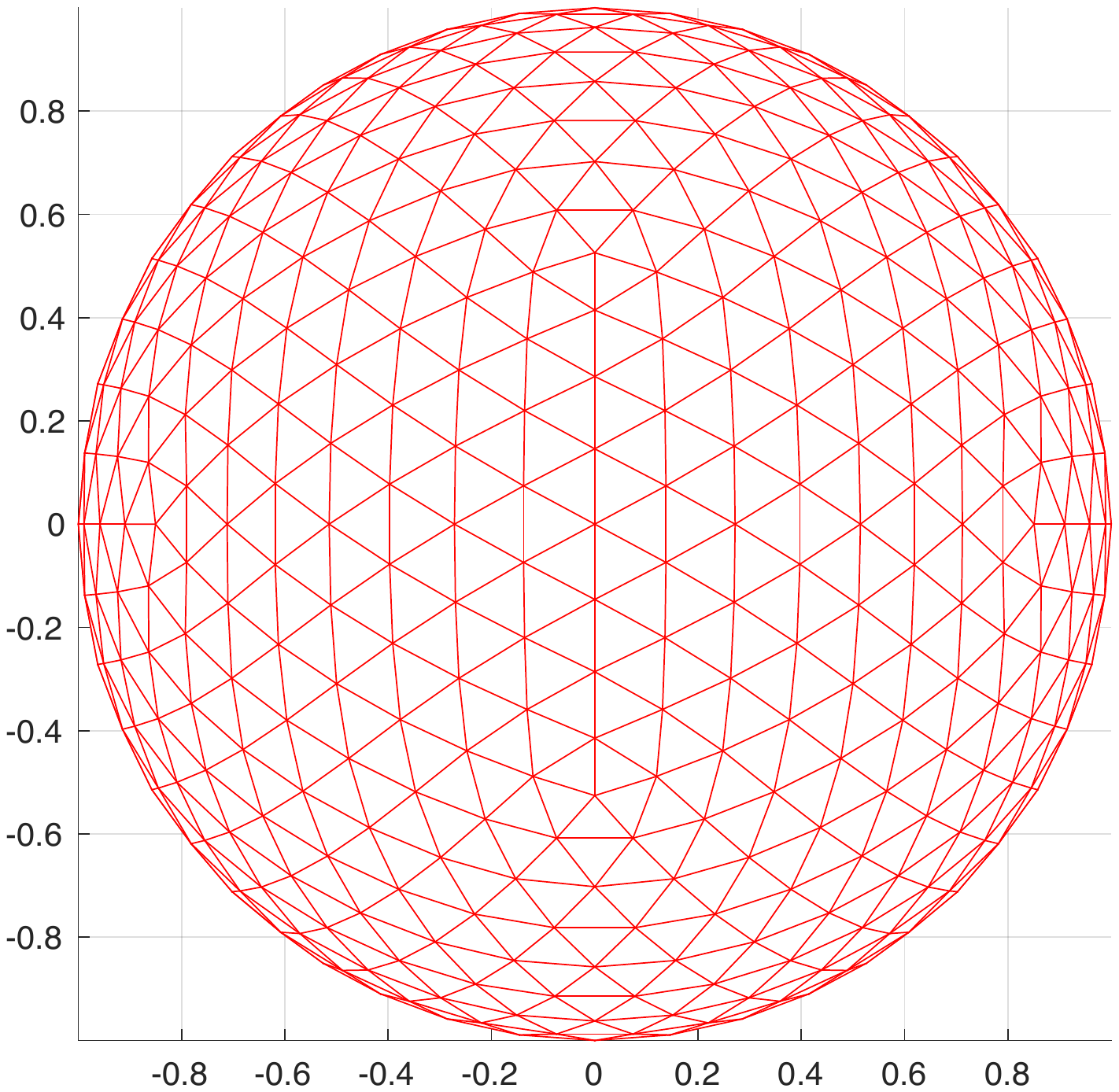}
\end{center}
\centerline{(d) top view of (b)}
\end{minipage}
\hspace{4mm}}
\caption{Example~\ref{ellipsoid}. Meshes of $N = 1280$ are plotted for $\Phi(x,y,z) = x^2+y^2+z^2-1$.
}
\label{figsphere}
\end{center}
\end{figure}

\begin{figure}[tbh]
\begin{center}
\hbox{
\begin{minipage}[t]{2in}
\begin{center}
\includegraphics[width=2in]{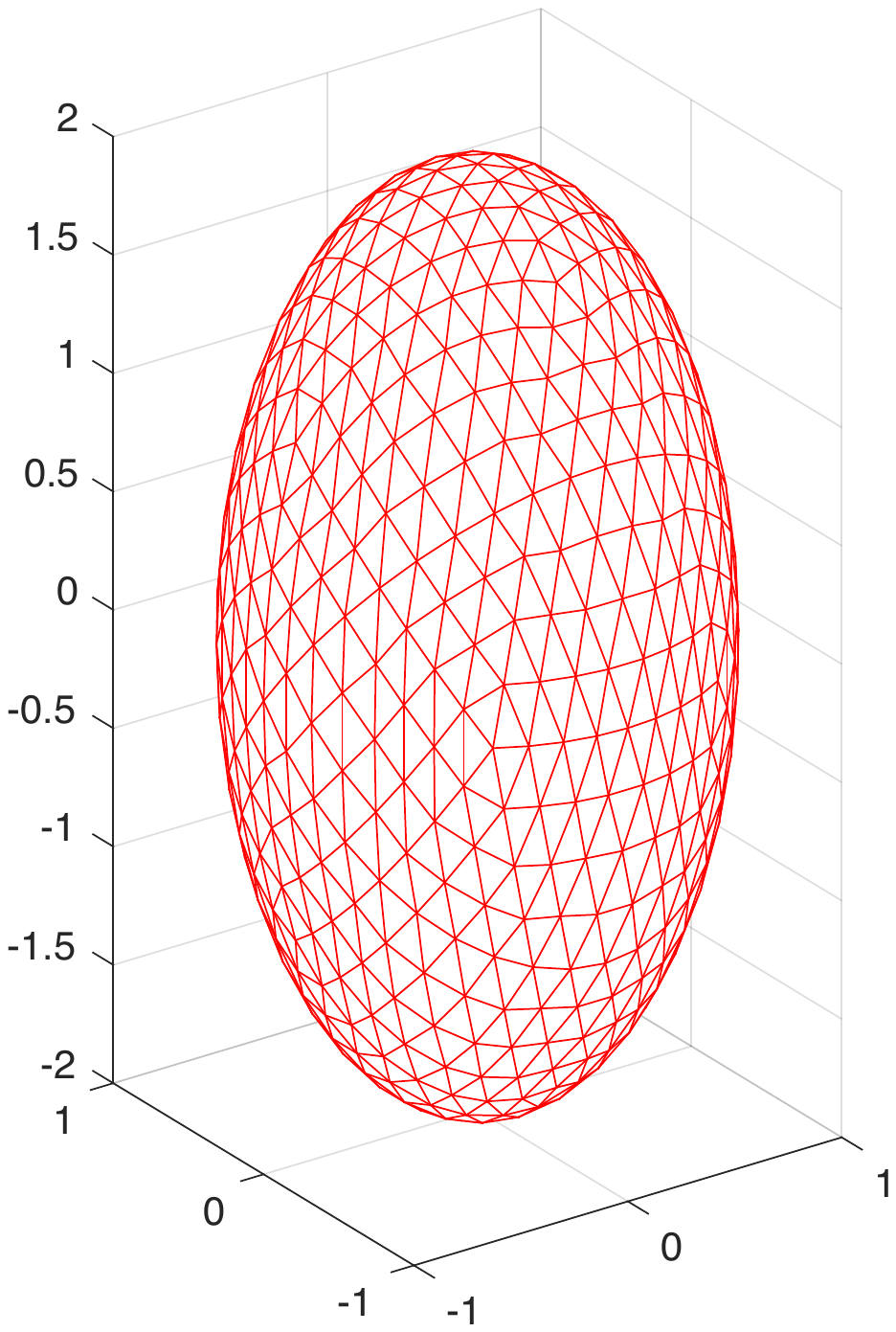}
\end{center}
\vspace{0.42cm}
\centerline{(a) Initial Mesh}\
\end{minipage}
\hspace{2mm}
\begin{minipage}[t]{2in}
\begin{center}
\includegraphics[width=2in]{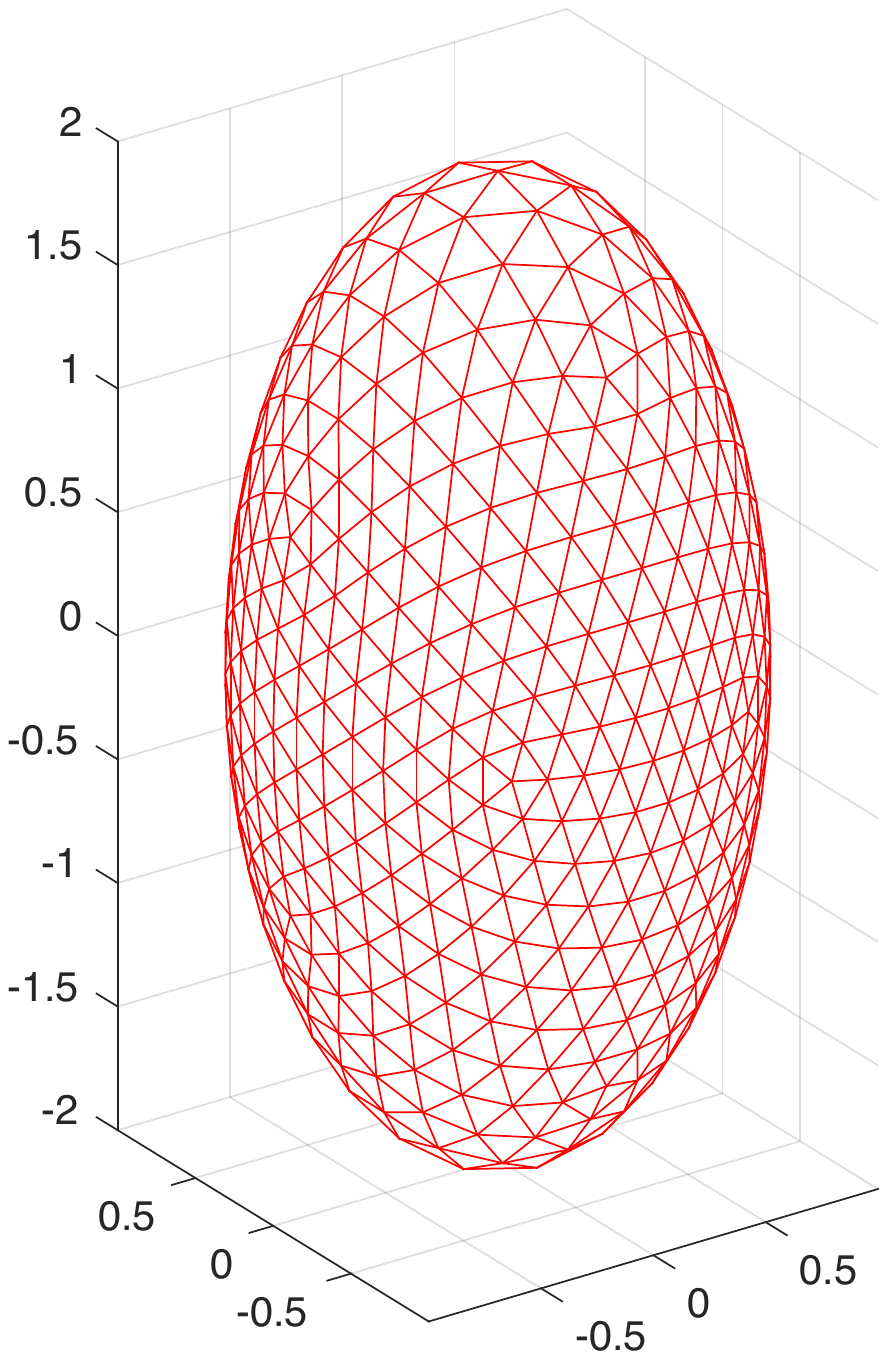}
\end{center}
\centerline{(b) Final Mesh $\mathbb{M}_K = I$}\
\end{minipage}
\hspace{2mm}
\begin{minipage}[t]{2in}
\begin{center}
\includegraphics[width=2in]{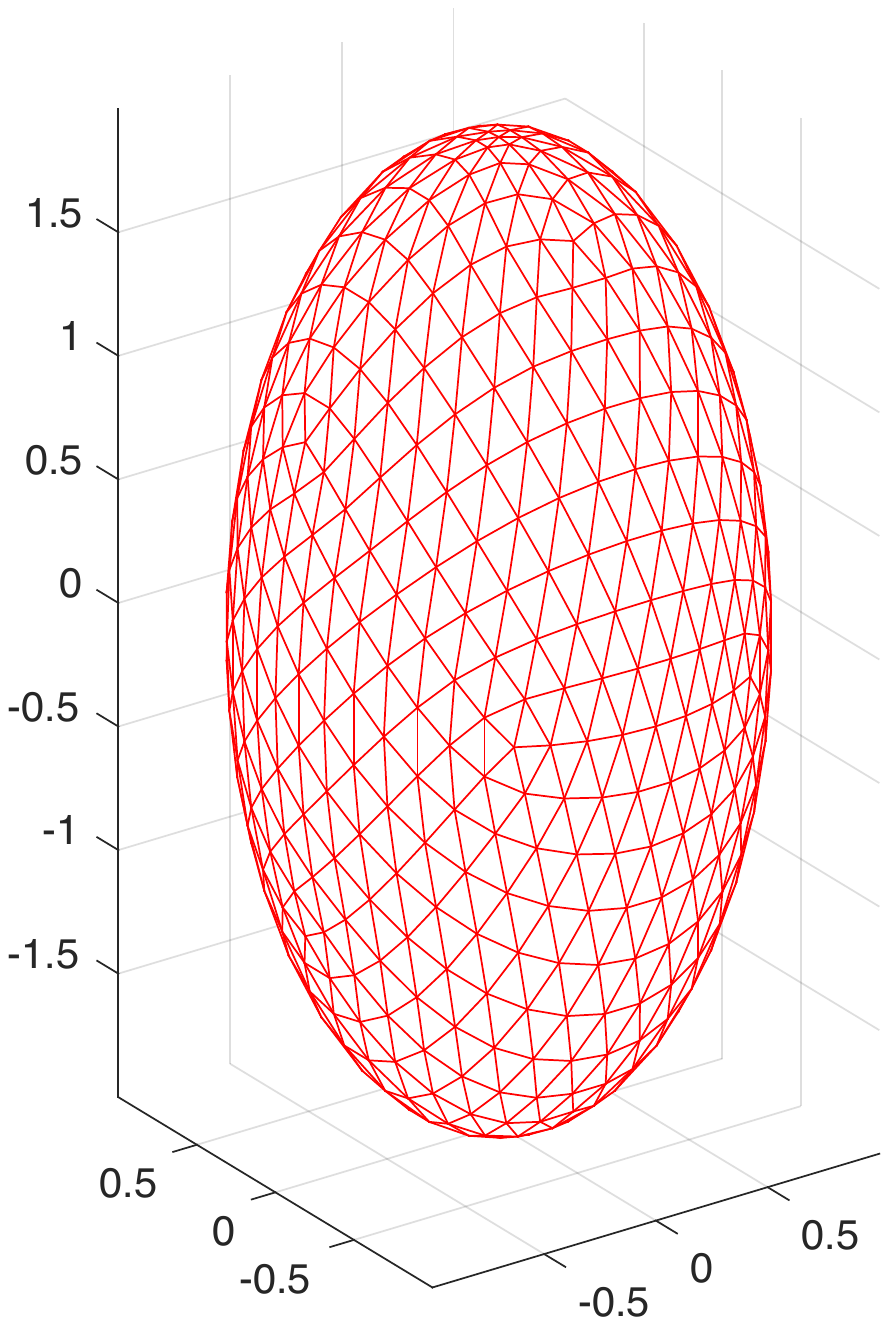}
\end{center}
\centerline{(c) Final Mesh $\tilde{\mathbb{M}}_K$ in (\ref{M-ellipsoid})}\
\end{minipage}
}
\hbox{
\begin{minipage}[t]{2in}
\begin{center}
\includegraphics[width=2in]{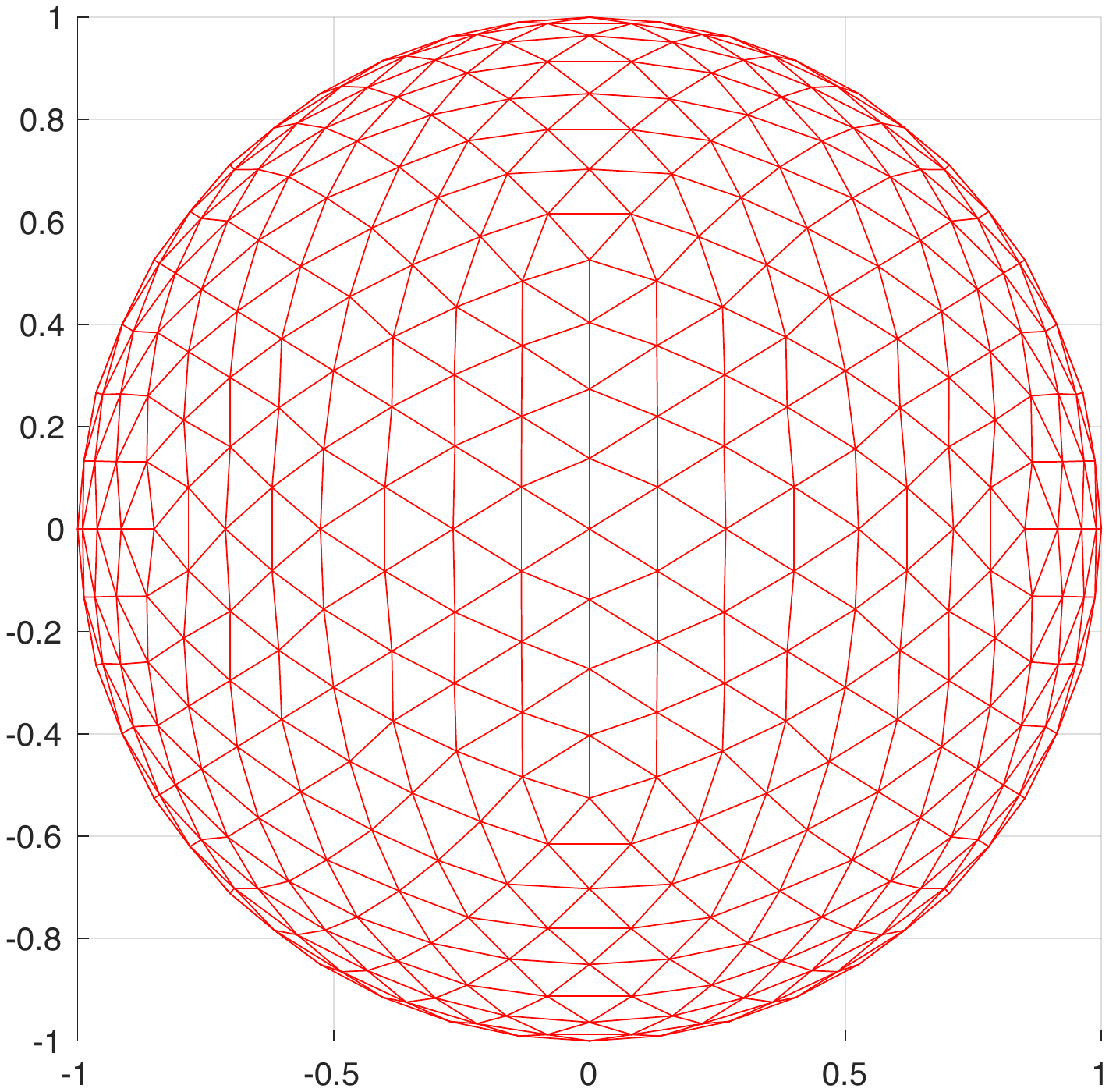}
\end{center}
\centerline{(d) top view of (a)}
\end{minipage}
\hspace{2mm}
\begin{minipage}[t]{2in}
\begin{center}
\includegraphics[width=2in]{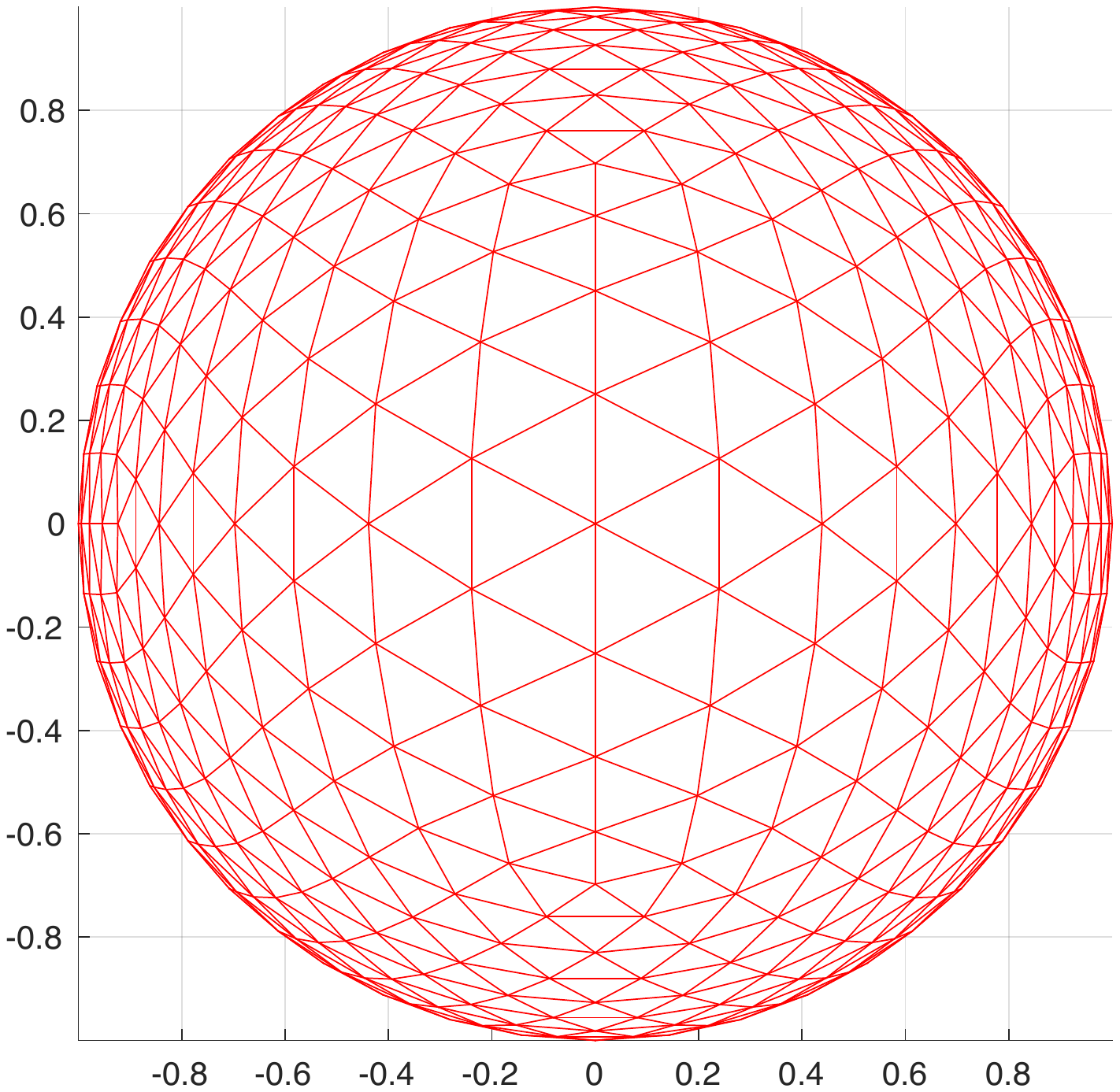}
\end{center}
\centerline{(e) top view of (b)}
\end{minipage}
\hspace{2mm}
\begin{minipage}[t]{2in}
\begin{center}
\includegraphics[width=2in]{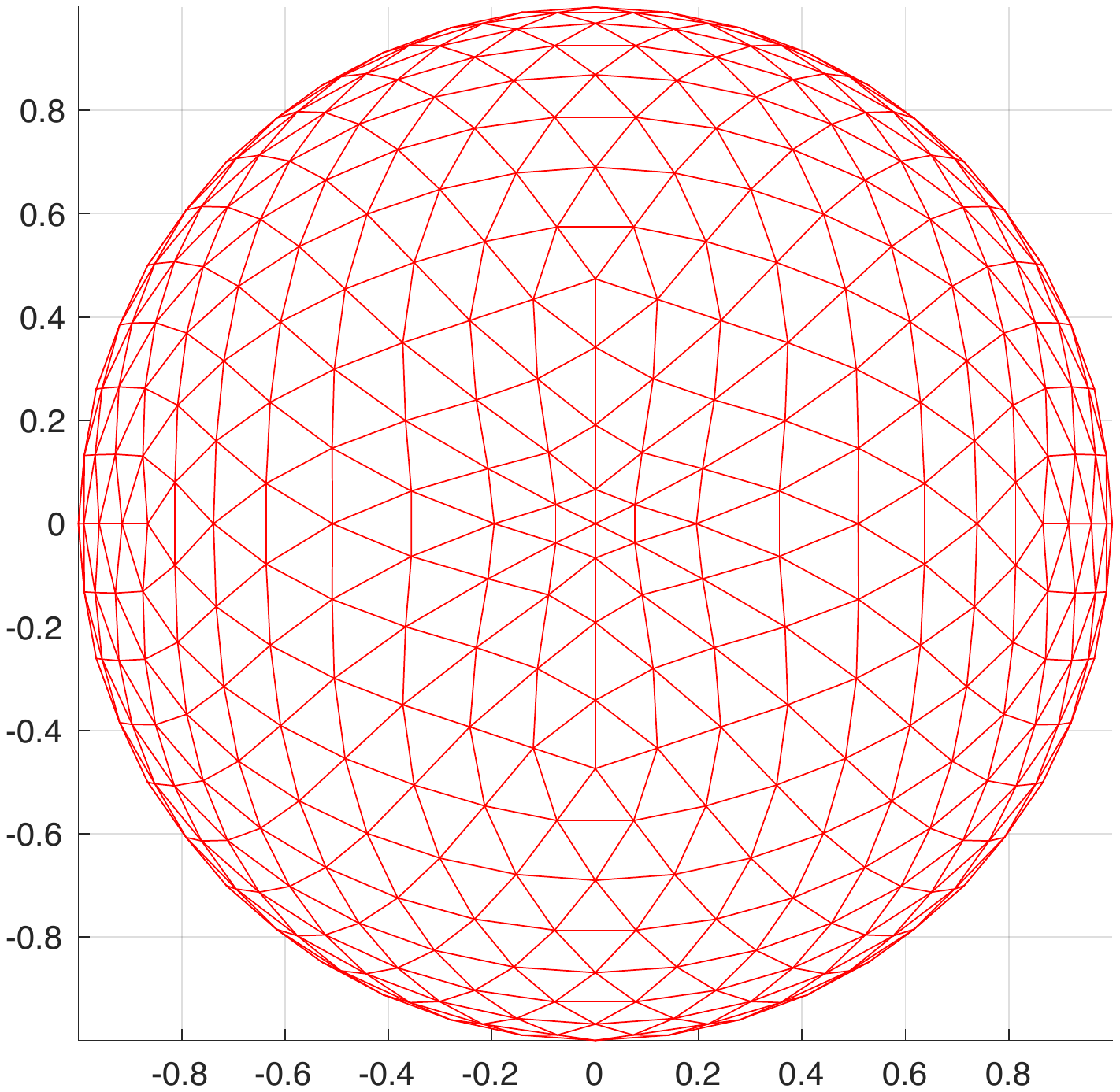}
\end{center}
\centerline{(f) top view of (c)}
\end{minipage}
}
\caption{Example~\ref{ellipsoid}. Meshes of $N = 1280$ are plotted for $\Phi(x,y,z) = x^2+y^2+\dfrac{z^2}{4}-1$.
}
\label{figellipsoid}
\end{center}
\end{figure}

\begin{figure}[tbh]
\begin{center}
\hbox{
\begin{minipage}[t]{2in}
\begin{center}
\includegraphics[width=2in]{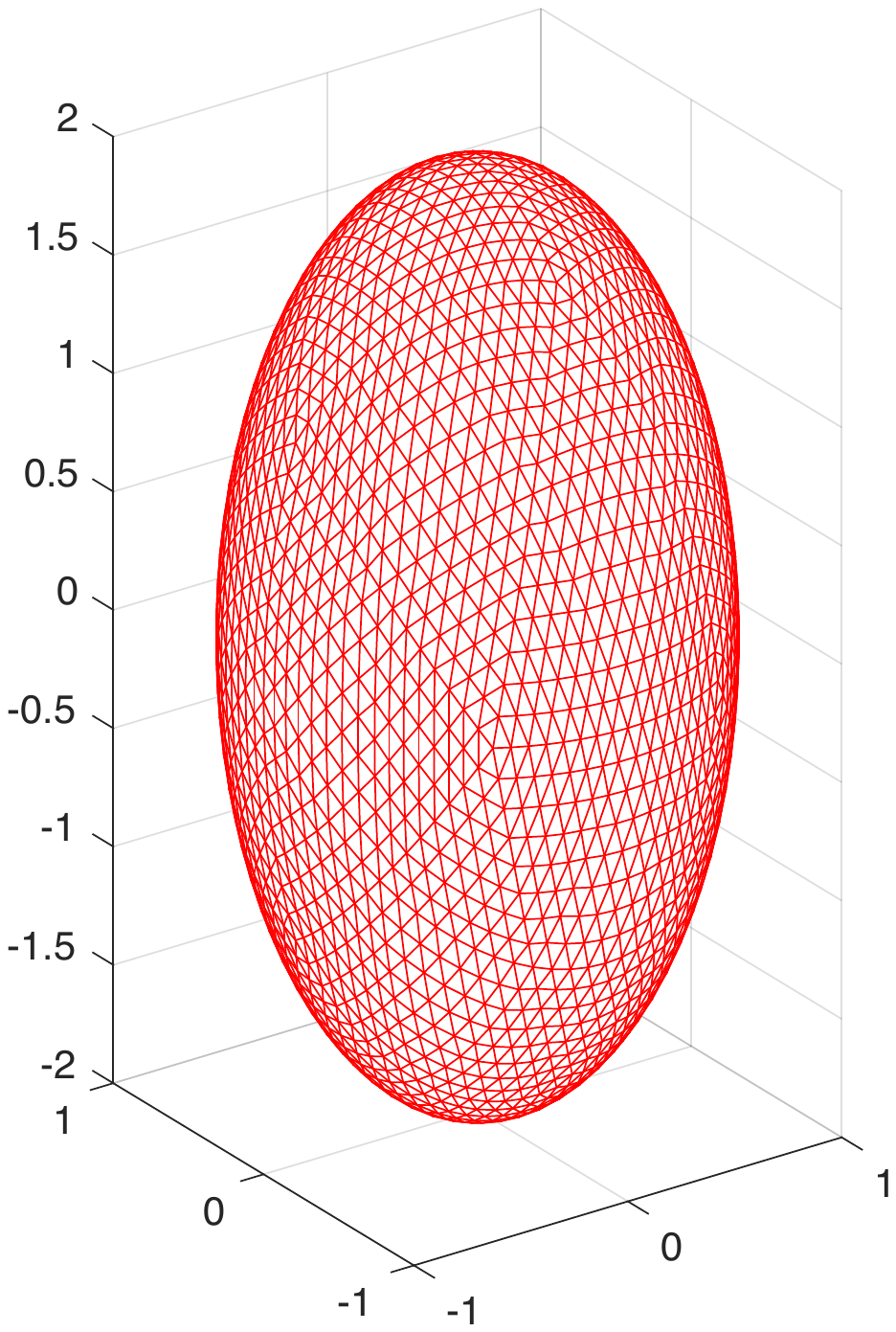}
\end{center}
\vspace{0.42cm}
\centerline{(a) Initial Mesh}\
\end{minipage}
\hspace{2mm}
\begin{minipage}[t]{2in}
\begin{center}
\includegraphics[width=2in]{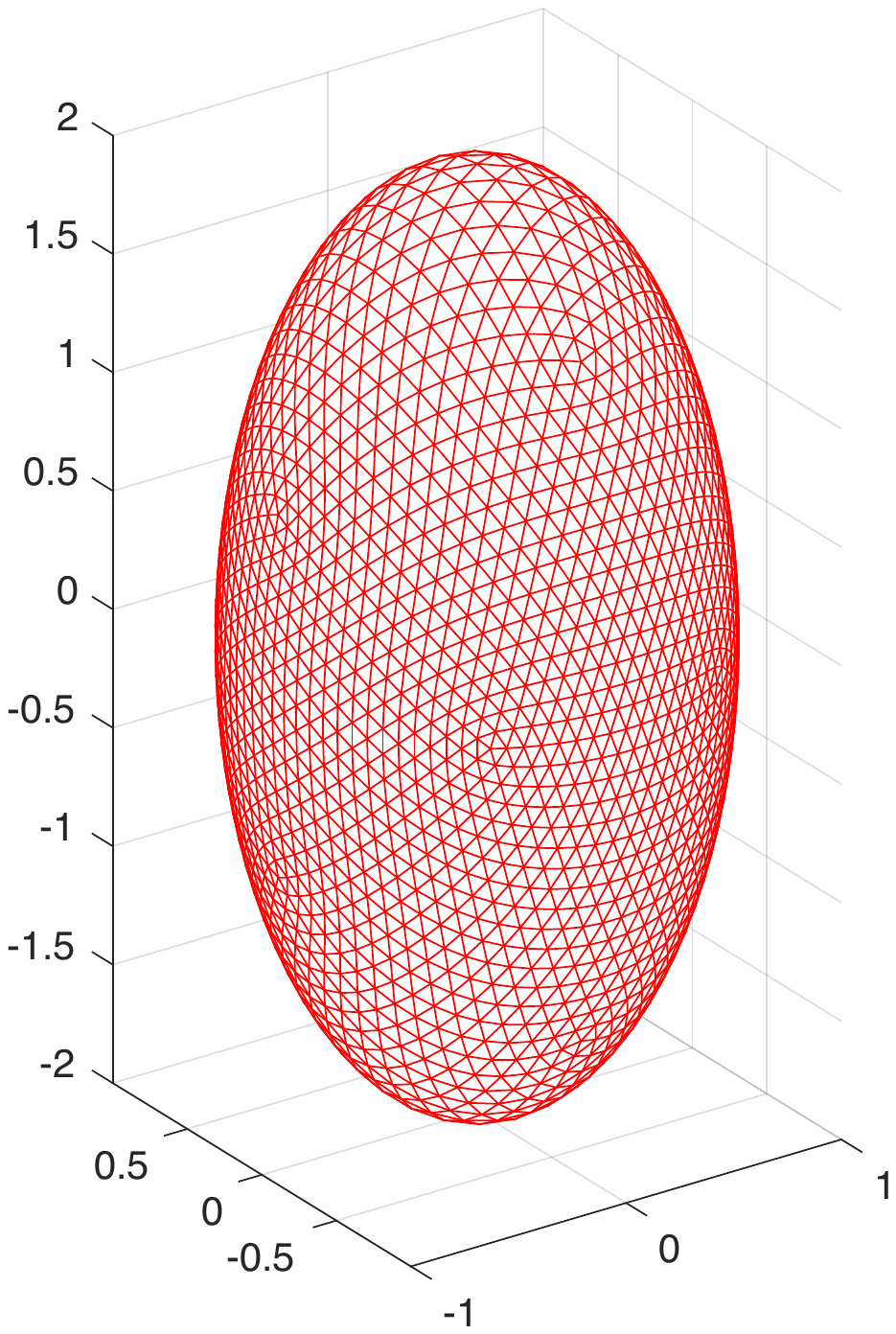}
\end{center}
\centerline{(b) Final Mesh $\mathbb{M}_K = I$}\
\end{minipage}
\hspace{2mm}
\begin{minipage}[t]{2in}
\begin{center}
\includegraphics[width=2in]{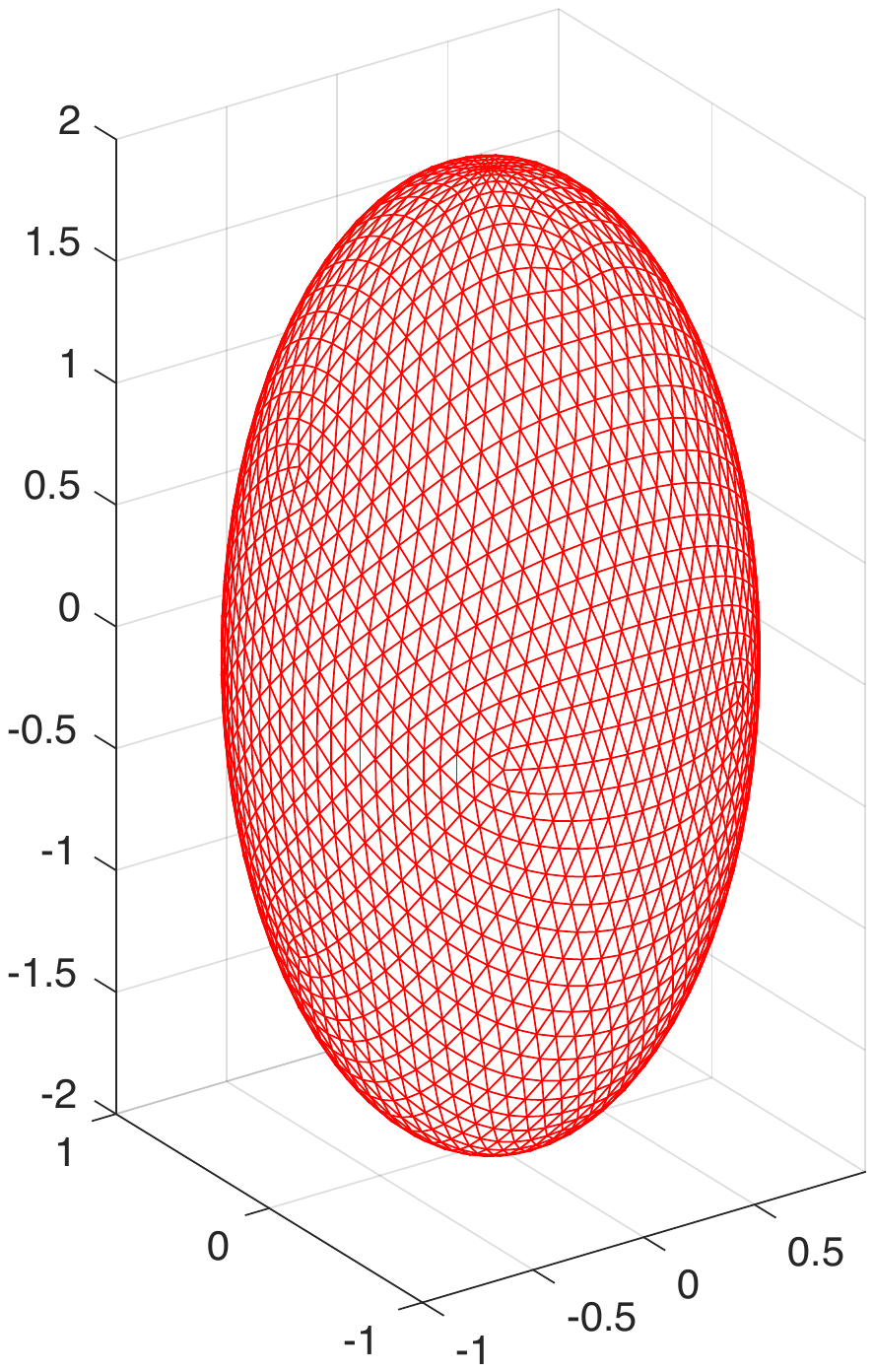}
\end{center}
\centerline{(c) Final Mesh $\tilde{\mathbb{M}}_K$ in (\ref{M-ellipsoid})}\
\end{minipage}
}
\hbox{
\begin{minipage}[t]{2in}
\begin{center}
\includegraphics[width=2in]{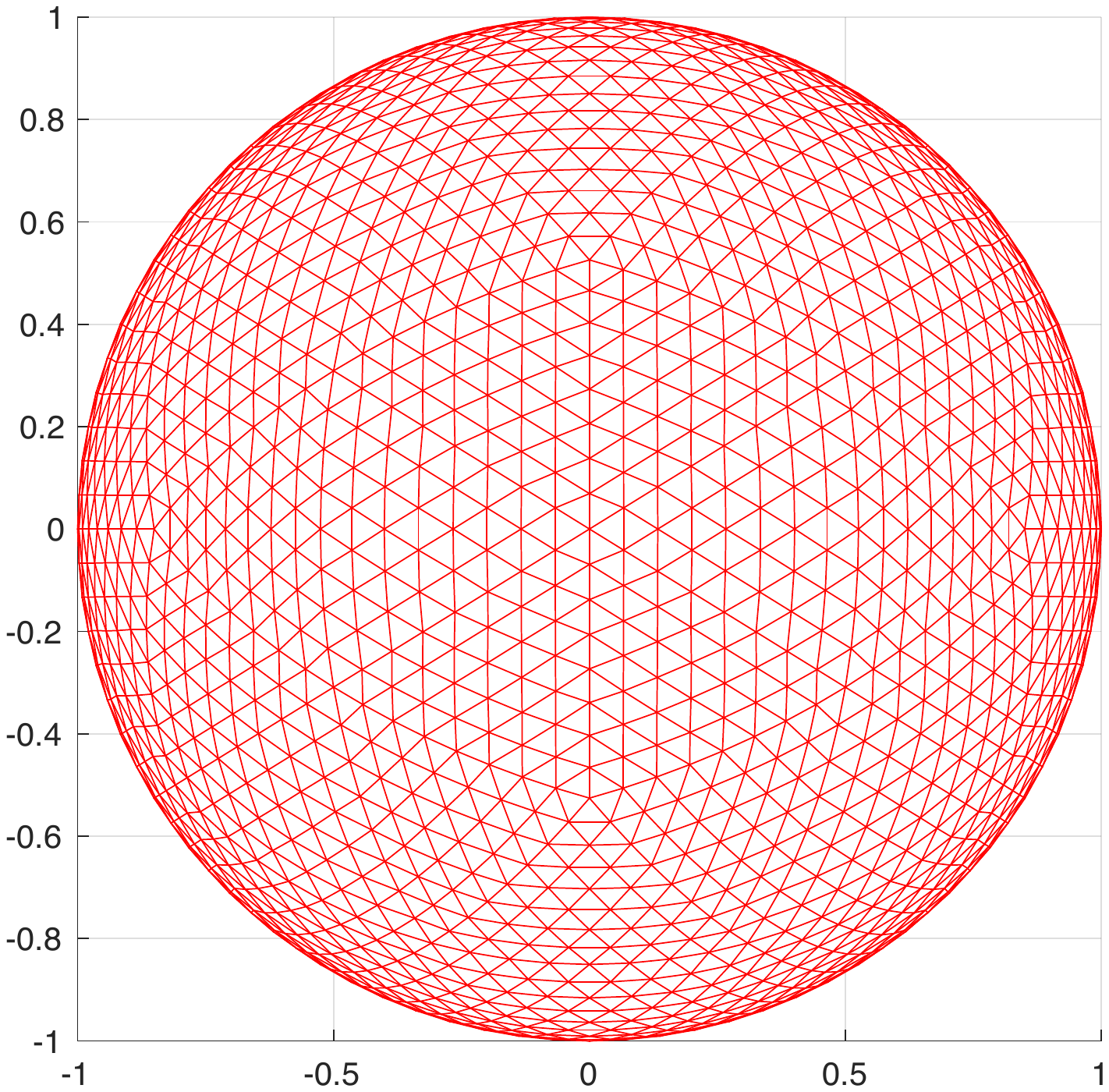}
\end{center}
\centerline{(d) top view of (a)}
\end{minipage}
\hspace{2mm}
\begin{minipage}[t]{2in}
\begin{center}
\includegraphics[width=2in]{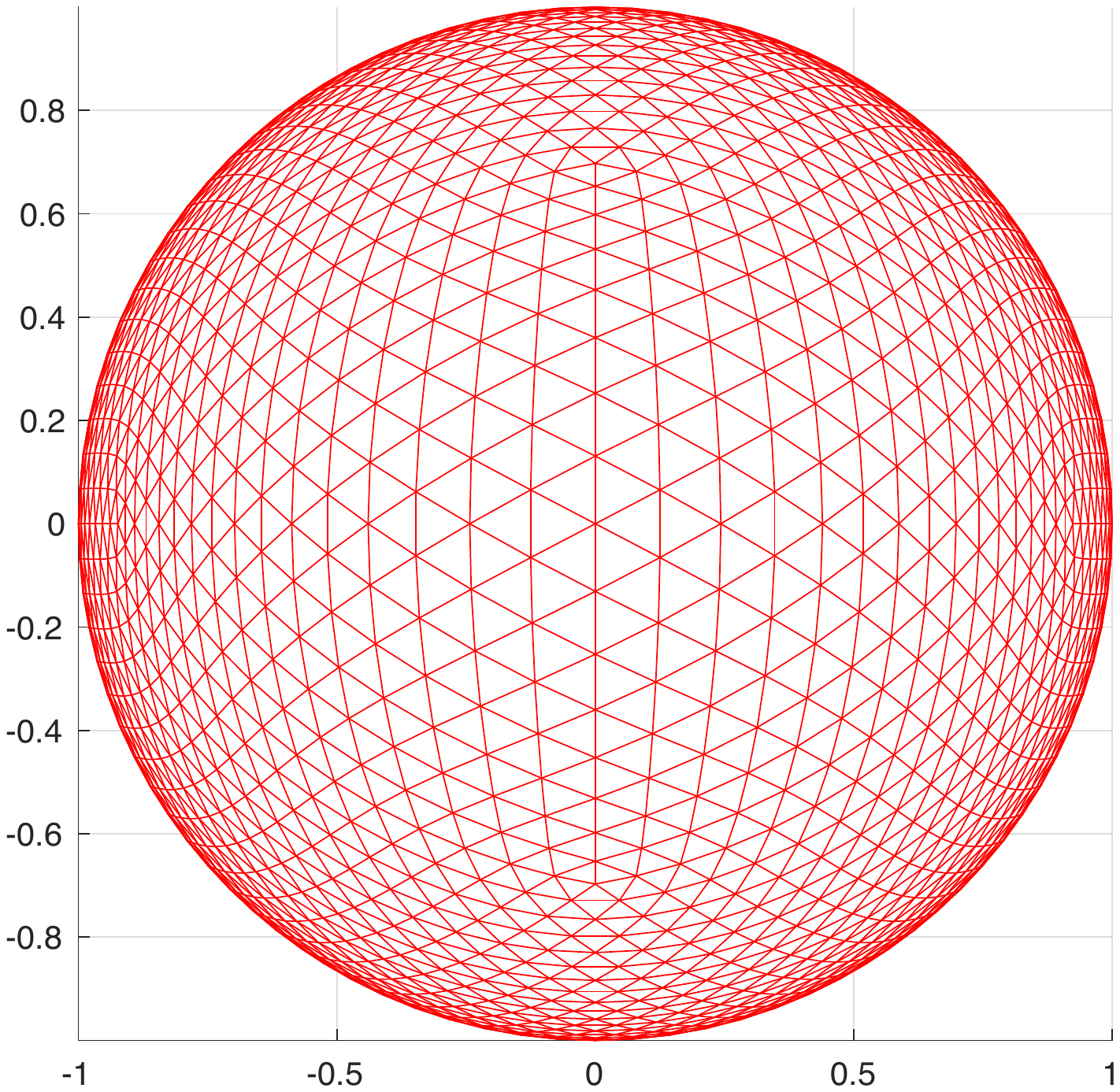}
\end{center}
\centerline{(e) top view of (b)}
\end{minipage}
\hspace{2mm}
\begin{minipage}[t]{2in}
\begin{center}
\includegraphics[width=2in]{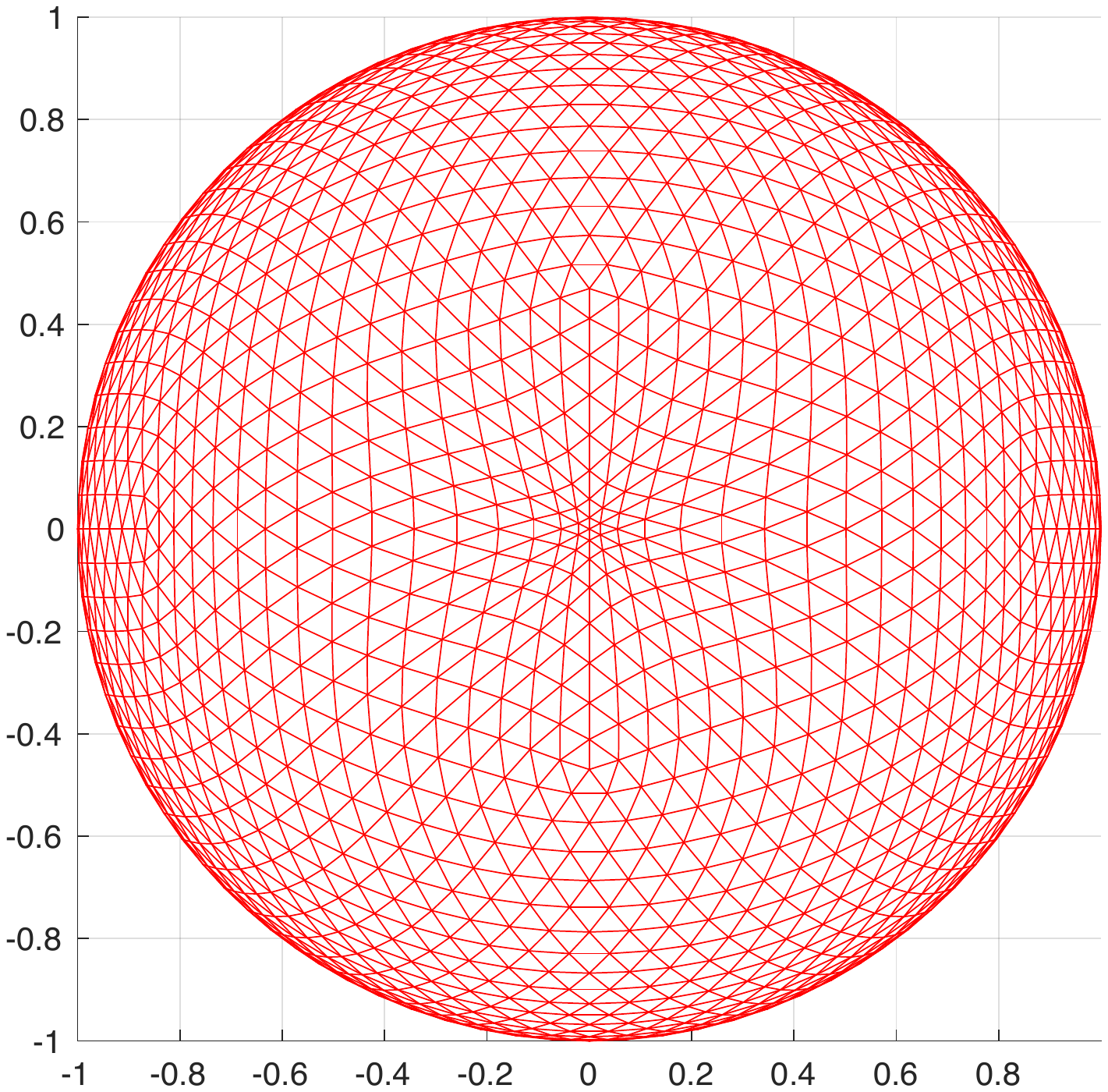}
\end{center}
\centerline{(f) top view of (c)}
\end{minipage}
}
\caption{Example~\ref{ellipsoid}. Meshes of $N = 5120$ are plotted for $\Phi(x,y,z) = x^2+y^2+\dfrac{z^2}{4}-1$.
}
\label{figellipsoid2}
\end{center}
\end{figure}

\end{exam}

\section{Conclusions and further comments}
\label{SEC:conclusion}
We have proposed a direct approach for surface mesh movement and adaptation that can be applied to a general surface with or without analytical expressions. 
We did so by first proving the relation (\ref{thm-|K|-2}) between the area of a surface element in a Riemannian metric and the Jacobian matrix of the affine mapping between the reference element and any simplicial surface element.
From this we formulated the equidistribution and alignment conditions as given in (\ref{equ}) and (\ref{ali}), respectively.  These two conditions enabled us to formulate a surface meshing function that is similar to a discrete version of Huang's functional for bulk meshes \cite{H}.  
The surface function satisifies the coercivity condition (\ref{coer}) for $\theta\in(0,1/2]$ and $p>1$. 

We defined the surface MMPDE (\ref{MMPDE-1}) as the gradient system of the meshing function, which utilizes surface normal vectors to inherently ensure that the mesh vertices remain on the surface during movement. 
Equations (\ref{vel2}) and (\ref{vel1}) give explicit, compact formulas for the mesh velocities making the time integration of the surface MMPDE (\ref{MMPDEx}) relatively easy to implement. 
Moreover, we showed that this surface MMPDE satisfies the energy decreasing property, which is one of the keys to proving Theorem \ref{nonsin}. This theorem is an important theoretical result as it states that the surface mesh remains nonsingular for all time if it is so initially.  Finally, we proved Theorem \ref{limiting} that states the mesh has limiting meshes, all of which are nonsingular.

A point of emphasis is that the new method is developed directly on surface meshes thus, making no use of any information on surface parameterization.  As mentioned, the MMPDE (\ref{MMPDE-1}) only depends on surface normal vectors which can be computed even when the surface has a numerical representation.  This allows the new method to be applied to general surfaces with or without explicit parameterization.

The numerical results presented in this work demonstrated that this new approach to surface mesh movement is successful. In all of the examples, the final mesh was seen to be much more uniform with respect to both cases of the metric tensor $\mathbb{M}_K= I$ and $\mathbb{M}_K = (k_K + \epsilon)I$ which was supported by the mesh quality measures.  Moreover, the theoretical properties were numerically verified in Ex. \ref{sin2d} and Ex \ref{sin3d} as we showed that $I_h$ is decreasing and $|K|$ is bounded below. 

The future goal is to develop this algorithm for any surface with or without analytical expression. Even though we only presented examples which have analytical expressions, we should emphasize that the MMPDE (\ref{MMPDEx}) uses only  the normal direction of the surface. Since these derivatives can be obtained numerically from the initial mesh or a background mesh, the method developed in this paper should work in principle for surfaces without explicit expressions. A practical difficulty is that the initial mesh or a background mesh typically does not represent the underlying surface accurately and approximate partial derivatives of $\Phi$ obtained directly from the mesh may not lead to acceptable results. Our next step in the research is to investigate the use of spline approximations of surfaces for this purpose. Moreover, the monitor functions we used in the examples are limited to simple scalar metric tensors.  It will be interesting to see how an anisotropic metric tensor such as one based on the shape map affects mesh movement and quality.

\vspace{10pt}

{\textbf{Acknowledgement.}} We would like to thank Dr. Lei Wang at the University of Wisconsin-Madison for providing us her code to generate initial icosahedral meshes for Example \ref{ellipsoid}.

\begin{appendices}
\section{Derivation of derivatives of the meshing function with respect to the physical coordinates}
\label{SEC:discrederiv}
Recall from Section~\ref{discretization} that $G_K = G\left(\J_K, r_K\right)$ and our objective is to compute
the derivatives
\[
\frac{\partial G_K}{\partial [\V x_1^K,\V  x_2^K, \dots ,\V  x_d^K]} .
\]
Let $t$ be an entry of $[\V x_1^K, \V x_2^K, \dots ,\V x_d^K]$. Using the chain rule we have
\[
\frac{\partial G_K}{\partial t} 
= \text{tr} \left (\frac{\partial G_K}{\partial E_K}  \frac{\partial E_K}{\partial t} \right )
+ \text{tr} \left (\frac{\partial G_K}{\partial \mathbb{M}_K}  \frac{\partial \mathbb{M}_K}{\partial t} \right ) .
\]
Denote
\beq
\label{term1}
\frac{\partial G_K}{\partial t} (I) = \text{tr} \left (\frac{\partial G_K}{\partial E_K}  \frac{\partial E_K}{\partial t} \right ) ,
\eeq
\beq 
\label{term2}
\frac{\partial G_K}{\partial t} (II)
= \text{tr} \left (\frac{\partial G_K}{\partial \mathbb{M}_K}  \frac{\partial \mathbb{M}_K}{\partial t} \right )  .
\eeq
To begin, consider (\ref{term1}).  When $t$ is an entry of $[\V x_2^K, \dots , \V x_d^K]$, recalling that
$E_K = [\V x_2^K-\V x_1^K, \dots , \V x_d^K-\V x_1^K]$, we have
\[
\frac{\partial G_K}{\partial t} (I) = \text{tr} \left (\frac{\partial G_K}{\partial E_K} 
\frac{\partial [\V x_2^K, \dots,\V x_d^K]}{\partial t} \right ) ,
\]
which implies
\[
\frac{\partial G_K}{\partial [\V x_2^K, \dots , \V x_d^K]} (I) = \frac{\partial G_K}{\partial E_K} .
\]
Moreover, for $t = \left(\V x_1^K\right)^{(1)}$ (the first component of $\V x_1^K$), we have
\[
\frac{\partial G_K}{\partial \left(\V x_1^K\right)^{(1)}} (I)
 = \text{tr}\left ( \frac{\partial G_K}{\partial E_K} \begin{bmatrix} -1 & -1 & \cdots & -1 \\ 0 & 0 & \cdots & 0 \\
\vdots & \vdots & & \vdots \\ 0 & 0 & \cdots & 0 \end{bmatrix} \right )
= - \displaystyle\sum_{i} \left (\frac{\partial G_K}{\partial E_K} \right )_{i, 1}.
\]
We can have similar expressions for $ \left(\V x_1^K\right)^{(j)}$ for $j = 2, \dots, d$. This gives
\[
\frac{\partial G_K}{\partial \V x_1^K}(I) = - \V e^T \frac{\partial G_K}{\partial E_K} \]
where $\V e^T = [1, \dots, 1]\in \mathbb{R}^{1\times (d-1)}.$
For (\ref{term2}), we assume that $\mathbb{M} = \mathbb{M}(\V x)$ is a piecewise linear function
defined on the current mesh, i.e., $\mathbb{M} = \displaystyle\sum_{j=1}^d \mathbb{M}_{j,K} \phi_j^K$, where
$\phi_j^K$ is the linear basis function associated with the vertex $\V x_j^K$ for all $j = 1, \dots, d$.
Denote the $i$th components of $\V{x}$ and $\V{x}_K$ by $\V x^{(i)}$ and $\V x_K^{(i)}$, respectively.
Then, 
for any entry $t$ of $[\V x_1^K,\V x_2^K, \dots , \V x_d^K]$, we have
\begin{align*}
\frac{\partial G_K}{\partial t} (II)
& = \text{tr} \left (\frac{\partial G_K}{\partial \mathbb{M}_K} \displaystyle\sum_{i=1}^d \frac{\partial \mathbb{M}_K}{\partial \V x^{(i)}} \right )
\frac{\partial \V x_K^{(i)}}{\partial t} \quad
\\
& = \text{tr} \left (\frac{\partial G_K}{\partial \mathbb{M}_K}  \displaystyle\sum_{i=1}^d 
\displaystyle \sum_{j=1}^d \mathbb{M}_{j,K}\frac{\partial \phi_{j,K}}{\partial \V x^{(i)}} \right )
\frac{\partial \V x_K^{(i)}}{\partial t}
\\
& =  \displaystyle\sum_{j=1}^d \text{tr} \left (\frac{\partial G_K}{\partial \mathbb{M}_K}  \mathbb{M}_{j,K} \right )
\frac{\partial \phi_{j,K}}{\partial \V x} \frac{\partial \V x_K}{\partial t} ,
\end{align*}
where we notice that $\frac{\partial \phi_{j,K}}{\partial \V x} $ and $\frac{\partial \V x_K}{\partial t}$
are a row and a column vector, respectively, and thus $\frac{\partial \phi_{j,K}}{\partial \V x} \frac{\partial \V x_K}{\partial t}$
is a dot product.
From this and the identity $\V x_K = (\V x_1^K+\dots+\V x_d^K)/d$, we get
\[
\frac{\partial G_K}{\partial [\V x_2^K, \dots , \V x_d^K]}(II)
= \frac{1}{d} \displaystyle\sum_{j=1}^d \text{tr} \left (\frac{\partial G_K}{\partial \mathbb{M}_K}  \mathbb{M}_{j,K} \right )
\begin{bmatrix} \frac{\partial \phi_{j,K}}{\partial \V x} \\ \vdots \\ \frac{\partial \phi_{j,K}}{\partial \V x} \end{bmatrix}\]
and
\[
\frac{\partial G_K}{\partial \V x_1^K}(II)
= \frac{1}{d} \displaystyle\sum_{j=1}^d \text{tr} \left (\frac{\partial G_K}{\partial \mathbb{M}_K}  \mathbb{M}_{j,K} \right )
\frac{\partial \phi_{j,K}}{\partial \V x} .
\]
Summarizing the above results, we have
\begin{align}
& \frac{\partial G_K}{\partial [\V x_2^K, ..., \V x_d^K]} 
= \frac{\partial G_K}{\partial E_K} 
+\frac{1}{d} \displaystyle\sum_{j=1}^d \text{tr} \left (\frac{\partial G_K}{\partial \mathbb{M}_K}  \mathbb{M}_{j,K} \right )
\begin{bmatrix} \frac{\partial \phi_{j,K}}{\partial \V x} \\ \vdots \\ \frac{\partial \phi_{j,K}}{\partial \V x} \end{bmatrix},
\label{der-1}
\\
& \frac{\partial G_K}{\partial \V x_1^K} 
= - \V e^T \frac{\partial G_K}{\partial E_K} 
+ \frac{1}{d} \displaystyle\sum_{j=1}^d \text{tr} \left (\frac{\partial G_K}{\partial \mathbb{M}_K}  \mathbb{M}_{j,K} \right )
\frac{\partial \phi_{j,K}}{\partial \V x} .
\label{der-2}
\end{align}
Notice that (\ref{der-2}) can be rewritten as
\begin{align}
\frac{\partial G_K}{\partial \V x_1^K} 
= - \sum_{j=2}^d \frac{\partial G_K}{\partial \V x_j^K} 
+ \displaystyle\sum_{j=1}^d \text{tr} \left (\frac{\partial G_K}{\partial \mathbb{M}_K}  \mathbb{M}_{j,K} \right )
\frac{\partial \phi_{j,K}}{\partial \V x} ,
\label{der-2-1}
\end{align}
which gives (\ref{vel1}).

Next, we establish the relations between
\[
\frac{\partial G_K}{\partial E_K}, \quad 
\frac{\partial G_K}{\partial \mathbb{M}_K} \quad \quad \text{ and }\quad \quad
\frac{\partial G_K}{\partial \J},\quad \frac{\partial G_K}{\partial r} .
\]
First recall that $F_K' = E_K \hat{E}^{-1}$, thus
\beq
\label{J}
\J = \left(\left(F_K'\right)^T\mathbb{M}_KF_K'\right)^{-1} = \hat{E} \left(E_K^T\mathbb{M}_K E_K\right)^{-1} \hat{E}^T .
\eeq
Let $E_K = E_K(t)$. Then we have
\begin{align}
\frac{\partial G_K}{\partial t} & = \text{tr} \left ( \frac{\partial G_K}{\partial \J} \frac{\partial \left(\left(F_K'\right)^T\mathbb{M}_KF_K'\right)^{-1}}{\partial t} \right )
+ \frac{\partial G_K}{\partial r} \frac{\partial \det\left(\left(F_K'\right)^T\mathbb{M}_KF_K'\right)^{-1}}{\partial t}\notag
\\
& \label{dGdt}= \text{tr} \left ( \frac{\partial G_K}{\partial \J} \hat{E} \frac{\partial \left(E_K^T\mathbb{M}_K E_K\right)^{-1} }
{\partial t} \hat{E}^T \right )
+\det(\hat{E})^{2} \frac{\partial G_K}{\partial r} \frac{\partial \det\left(E_K^T \mathbb{M}_K E_K\right)^{-1}}{\partial t}.
\end{align}
Consider the first term of (\ref{dGdt}). Using the properties of matrix derivatives (\ref{deriv}) we get
\begin{align*}
& \text{tr} \left ( \frac{\partial G_K}{\partial \J} \hat{E} \frac{\partial \left(E_K^T\mathbb{M}_K E_K\right)^{-1} }
{\partial t} \hat{E}^T \right )\\
 & =  -\text{tr} \left ( \frac{\partial G_K}{\partial \J} \hat{E}\left(E_K^T\mathbb{M}_K E_K\right)^{-1} \frac{\partial \left(E_K^T\mathbb{M}_K E_K\right) }
{\partial t}  \left(E_K^T\mathbb{M}_K E_K\right)^{-1}\hat{E}^T\right )\\
& = -\text{tr} \left ( \frac{\partial G_K}{\partial \J} \hat{E}\left(E_K^T\mathbb{M}_K E_K\right)^{-1} \left( \frac{\partial E_K^T}{\partial t}\mathbb{M}_K E_K+ E_K^T\mathbb{M}_K\frac{\partial E_K}{\partial t}\right) \left(E_K^T\mathbb{M}_K E_K\right)^{-1}\hat{E}^T\right ).\\
\end{align*}
Since $ \frac{\partial G_K}{\partial \J}$, $\mathbb{M}_K$, and $\left(E_K^T\mathbb{M}_K E_K\right)^{-1} $ are all symmetric, it follows that
\begin{align*}
 & \text{tr} \left ( \frac{\partial G_K}{\partial \J} \hat{E} \frac{\partial \left(E_K^T\mathbb{M}_K E_K\right)^{-1} }{\partial t} \hat{E}^T \right ) \\
 & \quad = -2\text{tr} \left (  \left(E_K^T\mathbb{M}_K E_K\right)^{-1} \hat{E}^T\frac{\partial G_K}{\partial \J} \hat{E} \left(E_K^T\mathbb{M}_K E_K\right)^{-1}  E_K^T  \mathbb{M}_K \frac{\partial E_K}{\partial t}\right ).
\end{align*}

The second term of (\ref{dGdt}) is then
\begin{align*}
&\det(\hat{E})^{2}\frac{\partial G_K}{\partial r}  \frac{\partial \det\left(E_K^T \mathbb{M}_K E_K\right)^{-1}}{\partial t}\\
 & =\frac{\det(\hat{E})^{2}}{\det\left(E_K^T \mathbb{M}_K E_K\right)}~\frac{\partial G_K}{\partial r}~\tr\left(\left(E_K^T \mathbb{M}_K E_K\right)\frac{\partial \left(E_K^T \mathbb{M}_K E_K\right)^{-1}}{\partial t}\right)\\
& = -\frac{\det(\hat{E})^{2}}{\det\left(E_K^T \mathbb{M}_K E_K\right)}~\frac{\partial G_K}{\partial r}~\tr\left(\frac{\partial \left(E_K^T \mathbb{M}_K E_K\right)}{\partial t} \left(E_K^T \mathbb{M}_K E_K\right)^{-1}\right)\\
& = -\frac{\det(\hat{E})^{2}}{\det\left(E_K^T \mathbb{M}_K E_K\right)}~\frac{\partial G_K}{\partial r}~\tr\left(\left( \frac{\partial E_K^T}{\partial t}\mathbb{M}_K E_K+ E_K^T\mathbb{M}_K\frac{\partial E_K}{\partial t}\right)\left(E_K^T \mathbb{M}_K E_K\right)^{-1}\right)\\
& = -2\frac{\det(\hat{E})^{2}}{\det\left(E_K^T \mathbb{M}_K E_K\right)}~\frac{\partial G_K}{\partial r}~\tr\left(\left(E_K^T \mathbb{M}_K E_K\right)^{-1} E_K^T\mathbb{M}_K\frac{\partial E_K}{\partial t}\right).
\end{align*}
Therefore
\begin{align}
\frac{\partial G_K}{\partial E_K} 
& = -2 \left(E_K^T\mathbb{M}_K E_K\right)^{-1} \hat{E}^T
\frac{\partial G_K}{\partial \J} \hat{E} (E_K^T\mathbb{M}_K E_K)^{-1} E_K^T\mathbb{M}_K
\notag \\
& \quad  - 2 \frac{\det(\hat{E})^{2}}{\det\left(E_K^T \mathbb{M}_K E_K\right)}\frac{\partial G_K}{\partial r} \left(E_K^T \mathbb{M}_K E_K\right)^{-1} E_K^T\mathbb{M}_K .
\label{der-3}
\end{align}
Combining this with (\ref{der-2}) we obtain (\ref{vel2}).

The identity (\ref{der-4}) can be obtained similarly.

Finally, we derive the relations between
\[
\frac{\partial \phi_{j,K}}{\partial \V x}, \quad j = 1, ..., d \quad \quad \text{ and } \quad E_K .
\]
First, note that the basis functions satisfy
$$
\displaystyle\sum_{i=1}^d\phi_{i,K}=1\hspace{1cm}\text{and}\hspace{1cm}\displaystyle\sum_{i=1}^d\V x_i^K\phi_{i,K}=\V x.
$$
Eliminating the $\V x_{1}^K$ yields
$$
\V x-\V x_{1}^K=\displaystyle\sum_{i=2}^d (\V x_{i}^K-\V x_{1}^K)\phi_{i,K}. 
$$
Then differentiating with respect to $\V x^{(k)}$ gives
$$
\V e_k=\dfrac{\partial (\V x-\V x_{1}^K)}{\partial \V x^{(k)}}
=\dfrac{\partial}{\partial \V x^{(k)}}\left(\displaystyle\sum_{i=2}^d (\V x_{i}^K-\V x_{1}^K)\phi_{i,K}\right)
=\displaystyle\sum_{i=2}^d(\V x_{i}^K-\V x_{1}^K)\dfrac{\partial \phi_{i,K}}{\partial \V x^{(k)}} ,
$$
where $\V e_k$ is the $k^{th}$ unit vector in $\mathbb{R}^d$. Hence we have
\[
I = E_K\begin{bmatrix}\frac{\partial \phi_{2,K}}{\partial \V x}\\ \vdots \\ \frac{\partial \phi_{d,K}}{\partial \V x}\end{bmatrix} ,
\]
which gives
\[
E_K^TE_K
\begin{bmatrix}\frac{\partial \phi_{2,K}}{\partial \V x}\\ \vdots \\ \frac{\partial \phi_{d,K}}{\partial \V x}\end{bmatrix}
=E_K^T
\]
and thus (\ref{der-5}).

\end{appendices}

\end{document}